\setlist[description]{leftmargin=\parindent,labelindent=\parindent}
\setlist[enumerate]{leftmargin=8mm ,labelindent=0cm}
\theoremstyle{plain}
\declaretheorem[title=Theorem, parent=section]{theorem}
\declaretheorem[title=Lemma,sibling=theorem]{lemma}
\declaretheorem[title=Proposition,sibling=theorem]{proposition}
\declaretheorem[title=Corollary,sibling=theorem]{corollary}
\theoremstyle{definition}
\declaretheorem[title=Definition,sibling=theorem]{definition}
\declaretheorem[title=Remark,sibling=theorem]{remark}
\declaretheorem[title=Remark, numbered=no]{remark*}
\declaretheorem[title=Example, sibling=theorem]{example}
\numberwithin{equation}{section}
\DeclareMathOperator{\dist}{dist}
\DeclareMathOperator{\supp}{supp}
\newenvironment{customthm}[1]
{\innercustomthm}
{\endinnercustomthm}
\newtheorem{counterexample}[theorem]{Counterexample}
\newcommand{\il}{\int\limits}
\newcommand{\iil}{\iint\limits}
\DeclareMathOperator{\R}{\mathbb{R}}
\renewcommand{\d}{\,\mathrm{d}}
\renewcommand{\div}{\operatorname{div}}
\newcommand{\WnuOmR}{W_{\nu}^{p}(\Omega|\R^d)}
\newcommand{\WnuOm}{W_{\nu}^{p}(\Omega)}
\newcommand{\eps}{\varepsilon}
\newcommand{\vertiii}[1]{{\left\vert\kern-0.25ex\left\vert\kern-0.25ex\left\vert #1 \right\vert\kern-0.25ex\right\vert\kern-0.25ex\right\vert}}
\title{A remake of Bourgain-Brezis-Mironescu characterization of Sobolev spaces 
\\\tiny{\url{https://doi.org/10.1007/s42985-023-00232-4}}}
\author{Guy Fabrice Foghem Gounoue}
\address{{\tiny Technische Universt\"{a}t Dresden, Fakult\"{a}t f\"{u}r Mathematik, Institut f\"{u}r Wissenschaftliches Rechnen,  Zellescher Weg 23-25 01217, Dresden, Germany. E-mail: guy.foghem[at]tu-dresden.de}} 
\thanks{Financial support by the DFG via IRTG 2235: ``Searching for the regular in the irregular: Analysis of singular and random systems'' is gratefully acknowledged.}
\thanks{Financial support by the DFG via the Research Group 3013: ``Vector-and Tensor-Valued Surface PDEs'' is gratefully acknowledged.}
\begin{document}
\begin{abstract}
We introduce a large class of concentrated $p$-L\'{e}vy integrable functions approximating the unity, which serves as the core tool from which we provide a nonlocal characterization of Sobolev spaces and the space of functions of bounded variation via nonlocal energies forms. It turns out that this nonlocal characterization is a necessary and sufficient criterion to define Sobolev spaces  on domains satisfying the extension property. We also examine the  general case where the extension property does not necessarily hold. In the latter case we establish weak convergence of the nonlocal Radon measures involved to the local Radon measures induced by the distributional gradient.
\end{abstract}

\keywords{Nonlocal energy forms, $p$-L\'{e}vy integrability, Sobolev spaces, Bounded variation spaces, Extension domain}

\subjclass[2020]
{26B30, %Absolutely continuous functions, functions of bounded variation
46B45,  % Banach sequence spaces
%46A45, %sequence space
46E27,  %Spaces of measures, convergence of measures
46E30,  %Spaces of measurable functions (Lp-spaces, Orlicz spaces, Lorentz spaces, etc.)
46E35% Sobolev spaces and other spaces of “smooth” functions, embedding theorems, trace theorems
%49J40, %Variational inequalities
%60F25 % Lp-limit theorems
}

\maketitle
\date{\today}
%\tableofcontents
\section{Introduction} 

Let $\Omega$ be an open subset of $\R^d$, $d\geq 1$,  and $1\leq p<\infty$. We aim to provide a nonlocal characterization of first order Sobolev spaces on $\Omega$ using the following type nonlocal energy forms
\begin{align}\label{eq:nonlocal-form}
\mathcal{E}^i_{\Omega}(u)= \iil_{ \R^d \R^d}|u(x)- u(y)|^pa^i_\Omega(x,y)\nu(x-y)\d y\d x, \quad i=1,2,3, 
\end{align}
where, $\nu:\R^d\setminus\{0\}\to [0,\infty)$ is measurable and satisfies the $p$-L\'{e}vy integrability condition 
\begin{align}\label{eq:plevy-cond}
\int_{\R^d}(1\land |h|^p)\nu(h) \d h<\infty\,,
\end{align} 
and $a^1_\Omega(x,y)= \min(\mathds{1}_{\Omega}(x), \mathds{1}_{\Omega} (y))$, $a^2_\Omega(x,y)= \max(\mathds{1}_{\Omega}(x), \mathds{1}_{\Omega} (y))$
%= 1-\min(\mathds{1}_{\Omega^c}(x), \mathds{1}_{\Omega^c} (y))$ 
and $a^3_\Omega(x,y)= \frac12(\mathds{1}_{\Omega}(x)+\mathds{1}_{\Omega} (y))$; where $\mathds{1}_{\Omega}$ is the indicator function of $\Omega$. Here and in what follows, the notation $a\land b$ stands for $ \min(a,b)$, $a,b\in \R$ . 
More explicitly, we can write the forms $\mathcal{E}^i_{\Omega},$ as  follows
\begin{align*}
\mathcal{E}^1_{\Omega}(u)&= \iil_{\Omega\Omega}|u(x)- u(y)|^p\nu(x-y)\d y\d x,
\\
\mathcal{E}^2_{\Omega}(u)&= \iil_{\mathcal{G}(\Omega)}|u(x)- u(y)|^p\nu(x-y)\d y\d x, \quad (\mathcal{G}(\Omega)= (\R^d\times \R^d)\setminus(\Omega^c\times\Omega^c)), 
\\
\mathcal{E}^3_{\Omega}(u)&= \iil_{\Omega \R^d}|u(x)- u(y)|^p\nu(x-y)\d y\d x.
\end{align*}
Note in passing that $\mathcal{E}^1_{\Omega}\leq \mathcal{E}^i_{\Omega},$ $\mathcal{E}^1_{\R^d}=\mathcal{E}^2_{\R^d}=\mathcal{E}^3_{\R^d}$ and   $\frac12\mathcal{E}^2_{\Omega} \leq \mathcal{E}^3_{\Omega}\leq \mathcal{E}^2_{\Omega}$ since $\frac12a^2_{\Omega}\leq a^3_{\Omega}\leq a^2_{\Omega}$. The nonlocal forms $\mathcal{E}^i_{\Omega}$ are crucial in the study of Integro-Differential Equations (IDEs) involving nonlocal operators of $p$-L\'{e}vy types; see for instance the recent works \cite{Fog20,FK22, DFK22}. For $p=2$, \eqref{eq:plevy-cond}, is the well-known L\'{e}vy integrability condition. Actually, when $\nu$ is radial, the $p$-L\'{e}vy integrability \eqref{eq:plevy-cond} condition is consistent and self-generated in the sense that condition \eqref{eq:plevy-cond} holds true if and only if $\mathcal{E}^1_{\R^d}(u)<\infty$ for all $u\in C_c^\infty(\R^d)$; see Section \ref{sec:approx-dirac-prop} for the details. In addition, the $p$-L\'{e}vy integrability condition \eqref{eq:plevy-cond} indicates that $\nu$ is allowed to have a heavy singularity at the origin. For instance, $\nu(h)=|h|^{-d-sp}$ satisfies the condition \eqref{eq:plevy-cond} if and only if $s\in (0,1)$. 

 Next, to reach our goal, we need to introduce a general class of approximation of the unity  by $p$-L\'{e}vy integrable functions. To be more precise, our standing approximation tool consists of a family of $p$-L\'{e}vy integrable functions, $(\nu_\eps)_\eps$ satisfying, for each $\eps>0$ and every $\delta>0$, 
 \begin{align}\label{eq:plevy-approx}
 \hspace{-0.2ex}\nu_\eps\geq0\,\, \text{is radial},\, \int_{\R^d}\hspace{-0.1ex}(1\land |h|^p)\nu_\eps(h) \d h=1\quad \text{and}\quad \lim_{\varepsilon\to 0}\hspace{-0.2ex}\int_{|h|>\delta} \hspace{-1ex}(1\land |h|^p)\nu_\eps(h)\d h=0. 
 \end{align}
 \noindent For instance, assume $\nu$ is radial and $\int_{\R^d}(1\land |h|^p)\nu(h) \d h=1$ then (see Proposition \ref{prop:rescalled}) one obtains a remarkable family $(\nu_\eps)_\eps$ satisfying \eqref{eq:plevy-approx} by the following rescaling
\begin{align*}%\label{eq:rescalled-levy-measure}
\begin{split}
\nu_\varepsilon(h) = 
\begin{cases}
\varepsilon^{-d-p}\nu\big(h/\varepsilon\big)& \text{if}~~|h|\leq \varepsilon,\\
\varepsilon^{-d}|h|^{-p}\nu\big(h/\varepsilon\big)& \text{if}~~\varepsilon<|h|\leq 1,\\
\varepsilon^{-d}\nu\big(h/\varepsilon\big)& \text{if}~~|h|>1.
\end{cases}
\end{split}
\end{align*}
Another  sub-class of $(\nu_\eps)_\eps$ satisfying \eqref{eq:plevy-approx} is obtained by putting $\nu_\eps(h)= c_\eps|h|^{-p} \rho_\eps(h),$ where $(\rho_\varepsilon)_\varepsilon$ is a family of integrable functions approximating the unity, i.e., for each $\eps>0$ and every $\delta>0$, 
\begin{align}\label{eq:approx-dirac}
\rho_\eps\geq0\,\, \text{ is radial},\quad \int_{\R^d}\rho_\eps(h) \d h=1\quad \text{and}\quad \lim_{\varepsilon\to 0}\int_{|h|>\delta} \hspace{-2ex}\rho_\eps(h)\d h=0, \, 
\end{align} 
 and $c_\eps>0$ is a suitable norming constant for which the integrability condition in \eqref{eq:plevy-approx} is verified. From this perspective, the class of approximation of the unity $(\rho_\eps)_\eps$ satisfying \eqref{eq:approx-dirac} can be viewed as a subclass 
$(\nu_\eps)_\eps$ satisfying \eqref{eq:plevy-approx}. However,
the converse is not warranted. In other words the class $(\nu_\eps)_\eps$ is more general than the class $(\rho_\eps)_\eps$. This is because the family $(\nu_\eps)_\eps$ also includes families of the forms $(c_\eps|h|^{-p}\rho_\eps(h))_\eps$ for which $\rho_\eps{'s}$ are not integrable. For a simple example, consider $\nu_\varepsilon(h) = a_{\varepsilon, d,p} |h|^{-d-(1-\varepsilon)p}$ (see Example \ref{Ex: stable-class}) then $(\nu_\varepsilon)_\eps$ satisfies \eqref{eq:plevy-approx} but there is no family $(\rho_\eps)_\eps$ satisfying \eqref{eq:approx-dirac} such that $\nu_\eps(h)= c_\eps|h|^{-p}\rho_\eps(h)$.  
Viewed in the sense of the correspondence $(\rho_\eps)_\eps \mapsto (\nu_\eps)_\eps$ with $\nu_\eps= c_\eps |h|^{-p}\rho_\eps,$ the class of $(\nu_\eps)_\eps$ is therefore strictly lager than that of $(\rho_\eps)_\eps$.

\smallskip 

 We emphasize that our main goal is to characterize Sobolev spaces on an open set $\Omega\subset \R^d$ using a sequence $(\nu_\eps)_\eps$ satisfying \eqref{eq:plevy-approx}. 
 Let us recall that the Sobolev space $W^{1,p}(\Omega)$ is the Banach space of functions $u\in L^p(\Omega)$ whose first order distributional derivatives belong to $L^p(\Omega)$, with the norm $\|u\|_{W^{1,p}(\Omega)} = (\|u\|^p_{L^{p}(\Omega)}+ \|\nabla u\|^p_{L^p(\Omega)})^{1/p}.$ Another space of particular interest, that emerges naturally as a generalization of $W^{1,1}(\Omega)$, is the so called space of bounded variations $BV(\Omega)$. The space $BV(\Omega)$ consists in functions $u\in L^1(\Omega)$ with bounded variation, i.e., $|u|_{BV(\Omega) }<\infty$ where
\begin{align}\label{eq:bounded-variation}
|u|_{BV(\Omega) }:= \sup\Big\lbrace \int_{\Omega} u(x) \operatorname{div} \phi(x) \d x:~\phi \in C_c^\infty(\Omega, \R^d),~\|\phi\|_{L^\infty(\Omega, \R^d)}\leq 1\Big\rbrace.
\end{align}

\noindent The space $BV(\Omega)$ is a Banach space under the norm 
$\|u\|_{BV(\Omega) }= \|u\|_{L^1(\Omega) } + |u|_{BV(\Omega) }.$ We denote the distributional derivative of a function $u\in BV(\Omega)$ by $\nabla u$. Roughly speaking, $\nabla u =(\Lambda_1, \Lambda_2,\cdots, \Lambda_d)$ is a vector valued Radon measure on $\Omega$ such that 
\begin{align*}
\int_\Omega u(x)\frac{\partial \varphi}{\partial x_i}(x) \d x=
-\int_\Omega \varphi(x)\d \Lambda_i(x), \quad \text{for all }\quad \varphi\in C_c^\infty(\Omega),~~i=1,\cdots, d.
\end{align*}
 The quantity $|\nabla u|= (\Lambda_1^2+\cdots+\Lambda_d^2)^{1/2}$ is a positive Radon measure whose value on an open set $U\subset \Omega$ is $ |\nabla u|(U) =|u|_{BV(U)}$. 
 Conventionally, we put $\|\nabla u\|_{L^p(\Omega)} = \infty$ if $|\nabla u|$ is not in $L^p(\Omega)$ with $1<p<\infty$ and for $p=1$, $| u|_{B V(\Omega)} = \infty$ if the measure $|\nabla u|$ does not have a finite total variation. Note that, if $u\in W^{1,1}(\Omega)$ then $u\in BV(\Omega)$, $\partial_{x_i} u(x)\d x= \d\Lambda_i(x)$ and $ |u|_{BV(\Omega)}= \|\nabla u\|_{L^1(\Omega)}.$ Indeed, since $u\in W^{1,1}(\Omega)$, the integration by part implies 
\begin{align*}%\label{eq:bounded-variation}
|u|_{BV(\Omega) }= \sup\Big\lbrace \int_{\Omega}\nabla u(x) \cdot \phi(x) \d x:~\phi \in C_c^\infty(\Omega, \R^d),~\|\phi\|_{L^\infty(\Omega, \R^d)}\leq 1\Big\rbrace\leq \|\nabla u\|_{L^1(\Omega)}.
\end{align*}
%\begin{alignat*}{2}
%|u|_{BV(\Omega)} &= \sup_{\|\phi\|_{L^{\infty}(\Omega,\R^d)}\leq 1} \Big|\int_{\Omega}\nabla u(x) \, \cdot \phi(x)~\mathrm{d}x\Big|
%%\leq \sup_{\|\phi\|_{L^{\infty}(\Omega, \R^d)}\leq 1} \int_{\Omega} |\nabla u(x)| |\phi(x)|~\d x
%\leq \|\nabla u\|_{L^1(\Omega)}, 
%\end{alignat*}
%with supremum over $\phi\in C_c^\infty(\Omega,\R^d)$. 
Conversely, since $\partial_{x_i}u\in L^1(\Omega)$, take a sequence $(\chi_{n})_n\subset C_c^\infty(\Omega,\R^d)$, converging to $\nabla u$ in $L^1(\Omega, \R^d)$ and a.e. in $\Omega$. Define $\chi_n^\eps\in C_c^\infty(\Omega, \R^d)$, $\eps>0$ by $\chi^\eps_n= \chi_n(|\chi_n|^2+\eps^2)^{-1/2}$, so that $\|\chi_n^\eps\|_{L^{\infty}(\Omega, \R^d)} \leq 1$. The convergence dominated theorem and the integration by parts imply that 
\begin{align*}
\int_{\Omega} |\nabla u(x)|^2 (|\nabla u(x)|^2+\eps^2)^{-1/2}\d x
%&= \lim_{n\to \infty}\Big|\int_{\Omega} \nabla u(x)\cdot \chi^\eps_n (x)\d x \Big|\\
&=\lim_{n\to \infty}\Big|\int_{\Omega} u(x) \div \chi^\eps_n(x)\d x\Big|
\leq |u|_{BV(\Omega)}. 
\end{align*}

\noindent Whence Fatou's lemma  implies 
\begin{align*}
\int_{\Omega} |\nabla u(x)|\d x\leq \liminf_{\varepsilon\to 0} \int_{\Omega} |\nabla u(x)|^2 (|\nabla u(x)|^2+\eps^2)^{-1/2}\d x\leq |u|_{BV(\Omega)}. 
\end{align*}

\bigskip 

 We are now in position to state our first result.
\begin{theorem}\label{thm:liminf}
Let $\Omega\subset \R^d$ be open and $u\in L^p(\Omega)$ such that 
\begin{align}\label{eq:liminf-condition}
A_p:= \liminf_{\varepsilon\to 0} \iil_{\Omega\Omega } |u(x)-u(y)|^p\nu_\varepsilon(x-y)\d y\d x <\infty\,.
\end{align}
Then $u\in W^{1,p}(\Omega) $ for $1<p<\infty$ and $u\in BV(\Omega)$ for $p=1$. Moreover, there hold the estimates
\begin{align}\label{eq:gradient-estimate}
	\|\nabla u\|_{L^p(\Omega)}\leq d^2\frac{A_p^{1/p}}{K_{d,1}}\qquad\text{and} \qquad	 |u|_{BV(\Omega)}\leq d^2\frac{A_1}{K_{d,1}}.
\end{align}
\end{theorem}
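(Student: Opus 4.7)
The strategy is duality. Recall that for $1<p<\infty$ the Sobolev norm admits the representation
\[
\|\nabla u\|_{L^p(\Omega)}=\sup\Bigl\{-\int_\Omega u\operatorname{div}\phi\,dx:\phi\in C_c^\infty(\Omega,\R^d),\ \|\phi\|_{L^{p'}(\Omega)}\leq 1\Bigr\},
\]
and the corresponding supremum with $\|\phi\|_{L^\infty}\leq 1$ is exactly $|u|_{BV(\Omega)}$. Hence it suffices to show that for every $\phi\in C_c^\infty(\Omega,\R^d)$,
\[
\Bigl|\int_\Omega u(x)\operatorname{div}\phi(x)\,dx\Bigr|\leq \frac{d^2}{K_{d,1}}\,A_p^{1/p}\,\|\phi\|_{L^{p'}(\Omega)},
\]
(with $p'=\infty$ when $p=1$), after choosing a sequence $\varepsilon_k\to 0$ realizing the liminf in \eqref{eq:liminf-condition}.

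Fix such $\phi$, with $K=\supp\phi$ and $\delta_0=\dist(K,\partial\Omega)>0$. For $0<r<\delta_0$ and $w\in\mathbb{S}^{d-1}$, the translation invariance of Lebesgue measure and the compact support of $\phi$ yield the discrete integration-by-parts identity
\[
\int_\Omega u(x)\bigl[\phi(x)-\phi(x-rw)\bigr]\cdot w\,dx=-\int_K\bigl[u(x+rw)-u(x)\bigr]\phi(x)\cdot w\,dx,
\]
and a Taylor expansion of $\phi$ together with the sphere isotropy identity $\fint_{\mathbb{S}^{d-1}}w_iw_j\,d\sigma=\tfrac{1}{d}\delta_{ij}$ gives, uniformly on $K$,
\[
\fint_{\mathbb{S}^{d-1}}\bigl[\phi(x)-\phi(x-rw)\bigr]\cdot w\,d\sigma(w)=\frac{r}{d}\operatorname{div}\phi(x)+O(r^3).
\]
Multiplying by $u(x)$, integrating on $\Omega$, then weighting by the radial profile $r^{d-1}\tilde\nu_\varepsilon(r)$ of $\nu_\varepsilon$ and integrating over $r\in(0,\delta_0)$ turns sphere--times--radius integrals back into a Euclidean integral against $\nu_\varepsilon$. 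A Hölder estimate jointly in $(x,w)\in K\times\mathbb{S}^{d-1}$ for $1<p<\infty$ (and with $\|\phi\|_{L^\infty}$ for $p=1$) then converts the right-hand side into a power of the $\nu_\varepsilon$-weighted difference integral. This produces a bound of the form
\[
\Bigl|\int_\Omega u\operatorname{div}\phi\Bigr|^{p}\int_{|h|\leq\delta_0}|h|^p\nu_\varepsilon(h)\,dh\leq \Bigl(\tfrac{d^2}{K_{d,1}}\Bigr)^{p}\|\phi\|_{L^{p'}}^p\iint_{\Omega\Omega}|u(x)-u(y)|^p\nu_\varepsilon(x-y)\,dx\,dy+o_\phi(1).
\]
Since $|h|^p=1\wedge|h|^p$ on $|h|\leq 1$ and the mass of $(1\wedge|h|^p)\nu_\varepsilon$ outside any small ball vanishes by \eqref{eq:plevy-approx}, we have $\int_{|h|\leq\delta_0}|h|^p\nu_\varepsilon\,dh\to 1$ as $\varepsilon\to 0$; passing to the liminf along $\varepsilon_k$ delivers the desired dual bound.

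The main difficulty is normalization bookkeeping: because $\nu_\varepsilon$ need not be integrable near the origin (unlike the original BBM family $(\rho_\varepsilon/|\cdot|^p)_\varepsilon$), the factor $|h|^p$ must emerge organically from the power of $r$ supplied by the Taylor expansion \emph{before} any Hölder step, so that the normalization $\int(1\wedge|h|^p)\nu_\varepsilon=1$ is exactly what cancels in the limit. Assembling the final constant $d^2/K_{d,1}$ also takes care: a factor $d$ comes from inverting the isotropy constant $\tfrac{1}{d}$, a second factor $d$ arises in bounding $|\nabla u|$ by a sum of componentwise (or directional) derivatives, and the factor $1/K_{d,1}$ comes from the sphere reconstruction identity $|z|=\tfrac{1}{K_{d,1}}\fint_{\mathbb{S}^{d-1}}|w\cdot z|\,d\sigma$.
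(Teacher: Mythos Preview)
Your approach is genuinely different from the paper's. The paper does not average over the full sphere; instead, for a fixed direction $e$ and a scalar $\varphi\in C_c^\infty(\Omega)$, it proves a half-space integration-by-parts lemma (splitting $\{(y-x)\cdot e\geq 0\}$ and $\{(y-x)\cdot e\leq 0\}$, exploiting the symmetry of $\nu_\varepsilon$) that yields the per-direction bound
\[
\Big|\int_\Omega u\,\nabla\varphi\cdot e\,dx\Big|\leq \frac{A_p^{1/p}}{K_{d,1}}\,\|\varphi\|_{L^{p'}(\Omega)},
\]
with $K_{d,1}$ appearing through the half-sphere integral $\fint_{\{w\cdot e\geq 0\}}w\cdot e\,d\sigma=\tfrac12 K_{d,1}$. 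Riesz representation then gives $\partial_i u\in L^p(\Omega)$ componentwise, and the factor $d^2$ enters only at the very end when reassembling $\|\nabla u\|_{L^p}$ from the $\|\partial_i u\|_{L^p}$. Your isotropy-based route $\fint_{\mathbb{S}^{d-1}}w_iw_j\,d\sigma=\tfrac1d\delta_{ij}$ is a legitimate alternative and in fact produces a tighter constant, but the write-up has two problems worth fixing.

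\medskip
\emph{Order of operations.} As written, you weight by $r^{d-1}\tilde\nu_\varepsilon(r)$ and integrate in $r$ \emph{before} applying H\"older. For $p>1$ this fails: the left side then carries $\int_{|h|\leq\delta_0}|h|\,\nu_\varepsilon(h)\,dh$ (only one power of $|h|$, supplied by the Taylor expansion), which in general diverges as $\varepsilon\to 0$, and the $L^{p'}$-factor after H\"older would contain $\int_{|h|\leq\delta_0}\nu_\varepsilon(h)\,dh$ with no $|h|^p$ weight, which can also diverge. The correct order---compatible with your displayed inequality but not with your verbal description---is: at each fixed $r$, apply H\"older in $(x,w)\in K\times\mathbb{S}^{d-1}$, obtaining a scalar inequality with a factor $r$ on the left; then raise to the $p$-th power (this is what produces $r^p$); and only then integrate against $r^{d-1}\tilde\nu_\varepsilon(r)\,dr$ to get the factor $\int_{|h|\leq\delta_0}|h|^p\nu_\varepsilon(h)\,dh\to 1$. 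Your remark that ``$|h|^p$ must emerge from the power of $r$ supplied by the Taylor expansion'' is the source of the confusion: Taylor only gives $r^1$; the missing $r^{p-1}$ comes from raising to the $p$-th power (equivalently, from $(p-1)p'=p$ if one instead weights by $|h|^{p-1}\nu_\varepsilon$ before H\"older, which is essentially what the paper does via the factor $(1\wedge|x-y|^p)/|x-y|$).

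\medskip
\emph{Constants.} Your accounting for $d^2/K_{d,1}$ does not match your own argument. Your route produces $|\int_\Omega u\operatorname{div}\phi|\leq d\,K_{d,p'}^{1/p'}A_p^{1/p}\|\phi\|_{L^{p'}}$: the factor $d$ from inverting $\tfrac1d$, and $K_{d,p'}^{1/p'}$ from the sphere average in the H\"older step. There is no ``second factor $d$'' from a componentwise sum (you work directly with $\operatorname{div}\phi$), and the identity $|z|=\tfrac{1}{K_{d,1}}\fint|w\cdot z|\,d\sigma$ is never used. Since $K_{d,p'}\leq 1$ and $K_{d,1}\leq 1$, your constant is no larger than $d\leq d^2/K_{d,1}$, so the stated estimate still follows; but the explanation should be rewritten. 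Finally, to conclude $u\in W^{1,p}(\Omega)$ you still need to specialize to $\phi=\varphi e_i$ and invoke Riesz representation, as the paper does---the duality formula for $\|\nabla u\|_{L^p}$ you quote presupposes $u\in W^{1,p}(\Omega)$.
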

\vspace{2mm}

 \noindent The constant $K_{d,1}$ appearing in \eqref{eq:gradient-estimate} is a universal constant independent of the geometry of $\Omega$ and is given by the following general mean value formula over the unit sphere 
\begin{align}\label{eq:BBM-constant}
K_{d,p}= \fint_{\mathbb{S}^{d-1}} |w\cdot e|^p \d \sigma_{d-1}(w)= \frac{\Gamma\big(\frac{d}{2}\big)\Gamma\big(\frac{p+1}{2}\big)}{\Gamma\big(\frac{d+p}{2}\big) \Gamma\big(\frac{1}{2}\big)}, 
\end{align}
\noindent for any unit vector $e\in \mathbb{S}^{d-1}$; see Proposition \ref{prop:cost-Kdp} for the computation. The constant $K_{d,p}$ also appears in \cite{BBM01}. There is a similar constant in \cite[Section 7]{IN10} when studying nonlocal approximations of the $p$-Laplacian. 
%The rotation invariance of the Hausdorff measure $\d\sigma_{d-1}$ of the $d-1$-dimensional sphere $\mathbb{S}^{d-1}$ implies that the constant $K_{d,p}$ is independent of the vector $e\in \mathbb{S}^{d-1}$. 
Observe that in general, for every $z\in \R^d$, we have 
\begin{align}\label{eq:rotation-invariant-constant}
\fint_{\mathbb{S}^{d-1}} |w\cdot z|^p \d \sigma_{d-1}(w) =|z|^p\fint_{\mathbb{S}^{d-1}} |w\cdot e|^p \d \sigma_{d-1}(w)=|z|^pK_{d,p}. 
\end{align}

\noindent Theorem \ref{thm:liminf} yields the following nonlocal characterization of constant functions; see also \cite{Brezis-const-function}. 
\begin{theorem}\label{thm:contant-funct}
	Assume $\Omega\subset \R^d$ is open and connected. If $u\in L^p(\Omega)$, $1\leq p<\infty,$ is such that $A_p=0$ then $u$ is almost everywhere constant on $\Omega$. 
\end{theorem}

Let us now comment about Theorem \ref{thm:liminf}. Observing that,  $\mathcal{E}^1_\Omega(u)\leq \mathcal{E}^i_\Omega(u), i=1,2,3$, Theorem \ref{thm:liminf} obviously remains true if the nonlocal forms of type $\mathcal{E}^1_\Omega$ are replaced with those of type $\mathcal{E}^2_\Omega$ or $\mathcal{E}^3_\Omega$. It is to be noted that, Theorem \ref{thm:liminf} is governed by two fundamental counter intuitive remarks. Firstly, the lack of reflexivity of $L^1(\Omega)$ implies that, in the case $p=1$, the function belongs to $BV(\Omega)$ and not necessarily in $W^{1,1}(\Omega)$. In other words, assuming $A_1<\infty$ is not enough to conclude that $u\in W^{1,1}(\Omega)$. We give here a mere counterexample in one dimension; see Counterexample \ref{Ex:counterexample-extension} for the general case.
 For  $d=1$ and $p\geq1$, we consider,
\begin{align}\label{eq:counter-example}
\text{$\Omega= (-1,0)\cup (0,1), \, u(x) =\frac12\mathds{1}_{[0,1)} (x) -\frac12\mathds{1}_{(-1,0)}(x)\, $ and $\, \nu_{\eps} (h) = 
	\frac{p\eps(1-\eps)}{2|h|^{1+(1-\eps)p}} .$}
\end{align}
For $p=1$, it is straightforwards to verify that $u\in BV(-1, 1)\setminus W^{1,1}(-1,1)$ whereas we find that $A_1=1$. The second remark indicates that, the converse of Theorem \ref{thm:liminf} is not necessarily true in general. Indeed, by adopting the above example in \eqref{eq:counter-example}, see also the counterexample \ref{Ex:counterexample-extension}, we find that $u\in W^{1,p}(\Omega)$ while $A_p=\infty$ for $p>1$. 
A reasonable explanation to the latter matter is that, $\Omega= (-1,0)\cup (0,1)$ is not an extension $W^{1,p}$-domain. To put it another way, this situation in particular (and in general) occurs due to the lack of the regularity of the boundary $\partial\Omega$. Therefore, to investigate the converse of Theorem \ref{thm:liminf}, we need some additional assumption on $\Omega$ such as the extension property. It is noteworthy to recall that $\Omega \subset \R^d$ is called to be a $W^{1,p}$-extension (resp. a $BV$-extension) domain if there exists a linear operator $E:W^{1,p}(\Omega)\to W^{1,p}(\R^d)$ (resp. $E: BV(\Omega)\to BV(\R^d)$) and a constant $C: = C(d,p,\Omega)$ depending only on the domain $\Omega$ and the dimension $d$ such that 
	\begin{align*}
	Eu\mid_{\Omega} &= u \qquad\hbox{and} \qquad \|Eu\|_{W^{1,p}(\R^d)}\leq C \|u\|_{W^{1,p}(\Omega)} \qquad\text{for all}\quad u \in W^{1,p}(\Omega)\\
	(\text{resp.}\quad
	Eu\mid_{\Omega}& = u \qquad\hbox{and} \qquad \|Eu\|_{BV(\R^d)}\leq C \|u\|_{BV(\Omega)} \qquad\quad\, \text{for all}\quad u \in BV(\Omega)).
	\end{align*}
	
\noindent Examples of extension domains include, bounded Lipschitz domains which are both $W^{1,p}$-extension and $BV$-extension domains. In particular euclidean balls and rectangles in $\R^d$ are extension domains. The upper half space $\R^d_+= \{(x',x_d)\in \R^d\,: x_d>0 \}$ is a simple example of an unbounded extension domain. The geometric characterization of extension domains has been extensively studied in the last decades. The $W^{1,p}$-extension property of an open set $\Omega$ infers certain regularity of the boundary $\partial\Omega$. For instance, according to \cite[Theorem 2]{HKT08}, a $W^{1,p}$-extension domain $\Omega\subset \R^d$ is necessarily is a $d$-set, i.e., satisfies the volume density condition, viz. there exists 
a constant $c>0$ such that $|\Omega\cap B(x,r)|\geq cr^d$  for all $x\in \partial \Omega$ and $0<r<1$. In virtue of the Lebesgue differentiation theorem, one finds that a $d$-set $\Omega$ is a Jordan set\cite{Zor16}, i.e., its boundary $\partial \Omega$ has Lebesgue measure zero, $|\partial\Omega|=0$. Subsequently, for a $W^{1,p}$-extension domain $\Omega$ there holds, 
\begin{align}\label{eq:lp-boundary-extension}
\text{$\int_{\partial \Omega} |\nabla Eu (x)|^p\d x=0$\quad for all $u\in W^{1,p}(\Omega)$.}
\end{align} 

\noindent To the best of our knowledge, the question whether the geometric characterization \eqref{eq:lp-boundary-extension} remains true for a $BV$-extension domain is still unknown. However, thanks to \cite[Lemma 2.4]{HKT08} or \cite[Theorem 1.3]{GR22} we know that every $W^{1,1}$-extension is a $BV$-extension domain. Throughout this article, we require a $BV$-extension domain $\Omega$ to satisfy the condition 
\begin{align}\label{eq:bv-boundary-extension}
\text{$|\nabla Eu | (\partial \Omega) =\int_{\R^d}\mathds{1}_{\partial \Omega}(x) \d|\nabla Eu |(x)=0$\quad for all $u\in BV(\Omega)$.}
\end{align}
\noindent It is to be noted that, in contrast to \eqref{eq:lp-boundary-extension}, having $|\partial \Omega|=0$ does not necessarily imply \eqref{eq:bv-boundary-extension}. Indeed, it suffices to consider once more the example \eqref{eq:counter-example} where one gets $\nabla u=\delta_0$ (the Dirac measure at the origin), so that $|\nabla u|(\partial \Omega)=1$. Some authors rather define a $BV$-extension domain together with the condition \eqref{eq:bv-boundary-extension}; see for instance \cite{AFP00, GR22}. Extended discussions on $BV$-extension domains can be found in \cite{KMS10,Lah15}. Several references on extension domains for Sobolev spaces can be found in \cite{Zh15}. Our second main result, which is an improved converse of Theorem \eqref{thm:liminf}, reads as follows. 
\begin{theorem}\label{thm:BBM-limit-result}
	Assume $\Omega\subset \R^d$ is a $W^{1,p}$-extension domain. If $u \in L^p(\Omega)$ with $1<p<\infty$ or $p=1$ and $u\in W^{1,1}(\Omega)$ then we have 
	\begin{align}\label{eq:w1p-BBM-limit}
	 \lim_{\varepsilon\to 0} \iil_{\Omega\Omega } |u(x)-u(y)|^p\nu_\varepsilon(x-y)\d y\d x = K_{d,p}\|\nabla u\|^p_{L^p(\Omega)} . 
	\end{align}
Moreover if $p=1$ and $\Omega$ is a $BV$-extension domain then for $u\in L^1(\Omega)$ we have 
\begin{align}\label{eq:bv-BBM-limit}
\lim_{\varepsilon\to 0} \iil_{\Omega\Omega } |u(x)-u(y)|\nu_\varepsilon(x-y)\d y\d x = K_{d,1}|u|_{BV(\Omega)}.
\end{align}

\end{theorem}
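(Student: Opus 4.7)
The plan is to reduce to a BBM identity on all of $\R^d$ via the extension operator, then localize back to $\Omega$. Writing $v:=Eu$, this function lies in $W^{1,p}(\R^d)$ (resp.\ $BV(\R^d)$ in the BV case) and coincides with $u$ on $\Omega$, so the integrand in \eqref{eq:w1p-BBM-limit} and \eqref{eq:bv-BBM-limit} is unchanged when $u$ is replaced by $v$.

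The main analytic ingredient is the following \emph{subset BBM identity}: for any measurable $A\subset\R^d$ with $|\partial A|=0$ and any $v\in W^{1,p}(\R^d)$,
\begin{align*}
\lim_{\varepsilon\to 0}\iil_{A\times A}|v(x)-v(y)|^p\nu_\varepsilon(x-y)\d x\d y \;=\; K_{d,p}\|\nabla v\|^p_{L^p(A)}.
\end{align*}
I would prove this first for $v\in C_c^\infty(\R^d)$: after the change of variables $h=y-x$ the integral becomes $\int_{\R^d}\nu_\varepsilon(h)\int_{A\cap(A-h)}|v(x+h)-v(x)|^p\d x\d h$, which I split at $|h|\leq\delta$ versus $|h|>\delta$. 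The tail $|h|>\delta$ is negligible by the uniform bound $|v(x+h)-v(x)|^p\leq C(1\wedge|h|^p)\mathds{1}_{\supp v+B_1}(x)$ together with the vanishing in \eqref{eq:plevy-approx} of $\int_{|h|>\delta}(1\wedge|h|^p)\nu_\varepsilon(h)\d h$. On the head $|h|\leq\delta$, Taylor's formula yields $|v(x+h)-v(x)|^p=|h|^p|\nabla v(x)\cdot\omega|^p+o(|h|^p)$ uniformly in $x$ (with $\omega:=h/|h|$), while $|A\setminus(A-h)|\to 0$ as $h\to 0$ since $|\partial A|=0$. A polar decomposition of the radial measure $\nu_\varepsilon(h)\d h$ combined with \eqref{eq:rotation-invariant-constant} then extracts $K_{d,p}\|\nabla v\|^p_{L^p(A)}$, multiplied by the radial factor $|\mathbb{S}^{d-1}|\int_0^\delta r^{p+d-1}\nu_\varepsilon(r)\d r=\int_{|h|\leq\delta}|h|^p\nu_\varepsilon(h)\d h$, which tends to $1$ by \eqref{eq:plevy-approx}. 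The passage from $C_c^\infty$ to general $v\in W^{1,p}(\R^d)$ is a density argument based on the uniform-in-$\varepsilon$ bound $\iil_{\R^d\R^d}|g(x)-g(y)|^p\nu_\varepsilon(x-y)\d x\d y\leq C(p,d)\|g\|^p_{W^{1,p}(\R^d)}$, itself a consequence of $\int_{\R^d}|g(x+h)-g(x)|^p\d x\leq(|h|^p\wedge 1)(\|\nabla g\|^p_{L^p}+2^p\|g\|^p_{L^p})$ together with \eqref{eq:plevy-approx}.

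The conclusion \eqref{eq:w1p-BBM-limit} then follows by applying the subset identity with $A=\Omega$ and $v=Eu$: by \cite[Theorem 2]{HKT08} a $W^{1,p}$-extension domain is a $d$-set and so $|\partial\Omega|=0$, and $\|\nabla Eu\|_{L^p(\Omega)}=\|\nabla u\|_{L^p(\Omega)}$ since $Eu|_\Omega=u$. When $1<p<\infty$ and $u\in L^p(\Omega)\setminus W^{1,p}(\Omega)$, Theorem \ref{thm:liminf} forces $A_p=+\infty$ in \eqref{eq:liminf-condition}, so both sides of \eqref{eq:w1p-BBM-limit} equal $+\infty$.

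The BV statement \eqref{eq:bv-BBM-limit} requires upgrading the previous step from $W^{1,1}$ to $BV$. Extend $u\in BV(\Omega)$ to $Eu\in BV(\R^d)$ and mollify $v_\tau:=Eu*\eta_\tau\to Eu$ strictly in $BV(\R^d)$ as $\tau\to 0$, so that each $v_\tau\in C^\infty\cap W^{1,1}(\R^d)$ and the previous step yields $\lim_\varepsilon\iil_{\Omega\Omega}|v_\tau(x)-v_\tau(y)|\nu_\varepsilon(x-y)\d x\d y=K_{d,1}\|\nabla v_\tau\|_{L^1(\Omega)}$. The assumption \eqref{eq:bv-boundary-extension} ($|\nabla Eu|(\partial\Omega)=0$) is precisely what is needed to promote strict $BV(\R^d)$-convergence to $\|\nabla v_\tau\|_{L^1(\Omega)}\to|\nabla Eu|(\Omega)=|u|_{BV(\Omega)}$ via Reshetnyak-type continuity. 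The main obstacle is the interchange of the two limits in $(\varepsilon,\tau)$: the $\limsup$-side is handled by the triangle inequality for the $p=1$ BBM semi-norm combined with the uniform bound of paragraph two and a diagonal choice $\tau=\tau(\varepsilon)$; the matching sharp $\liminf$-bound, which is stronger than that of Theorem \ref{thm:liminf}, is obtained by duality---testing against smooth vector fields $\phi\in C_c^\infty(\Omega,\R^d)$ with $|\phi|\leq 1$ and invoking once more the polar-coordinate identity of paragraph two. Finally, the degenerate case $u\in L^1(\Omega)\setminus BV(\Omega)$ is covered directly by Theorem \ref{thm:liminf}.
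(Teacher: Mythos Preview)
Your $W^{1,p}$ argument (including $p=1$ with $u\in W^{1,1}(\Omega)$) is correct and coincides with the paper's second proof: the paper establishes the identity for $u\in C_c^\infty(\R^d)$ (Lemma~\ref{lem:BBM-regular}), then passes to general $u$ by density using the uniform bound of Lemma~\ref{lem:boundedness-limsup}; your ``subset BBM identity'' is the same computation, reorganised after applying the extension operator first. The paper also records an independent first proof, splitting into a sharp $\liminf$ (Theorem~\ref{thm:liminf-BBM}, via mollification and Jensen) and a sharp $\limsup$ (Theorem~\ref{thm:limsup_BBM}, via the head/tail estimate on $\Omega(\delta)$).

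There is a genuine gap in your BV $\limsup$ step. The ``triangle inequality combined with the uniform bound'' would need
\[
\iil_{\Omega\Omega}|(Eu-v_\tau)(x)-(Eu-v_\tau)(y)|\,\nu_\varepsilon(x-y)\,\d x\,\d y \xrightarrow[\tau\to 0]{} 0 \quad\text{uniformly in }\varepsilon,
\]
but the only uniform bound available controls this by $C\|Eu-v_\tau\|_{BV(\R^d)}$ (the $W^{1,1}$ bound you quote does not apply, since $Eu-v_\tau\notin W^{1,1}$ in general). Strict BV-convergence of mollifiers gives $\|v_\tau-Eu\|_{L^1}\to 0$ and $|v_\tau|_{BV}\to|Eu|_{BV}$, \emph{not} $|Eu-v_\tau|_{BV}\to 0$; the latter would force $Eu\in W^{1,1}(\R^d)$. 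No diagonal choice $\tau=\tau(\varepsilon)$ repairs this, because the error term is bounded below independently of $\varepsilon$.

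The paper (Theorem~\ref{thm:limsup_BBM}) circumvents the problem by Fatou's lemma rather than the triangle inequality: since $v_\tau\to Eu$ in $L^1$, for each fixed $\varepsilon$
\[
\iil_{\Omega\Omega}|u(x)-u(y)|\,\nu_\varepsilon(x-y)\,\d x\,\d y \;\le\; \liminf_{\tau\to 0}\iil_{\Omega\Omega}|v_\tau(x)-v_\tau(y)|\,\nu_\varepsilon(x-y)\,\d x\,\d y,
\]
and your own smooth head/tail split bounds the right-hand side by $K_{d,1}\int_{\Omega(\delta)}|\nabla v_\tau|+2\|v_\tau\|_{L^1(\Omega)}\int_{|h|>\delta}\nu_\varepsilon(h)\d h$. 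One then lets $\tau\to 0$ (strict convergence gives $\int_{\Omega(\delta)}|\nabla v_\tau|\to|Eu|_{BV(\Omega(\delta))}$ for a.e.\ $\delta$), then $\varepsilon\to 0$, then $\delta\to 0$ using $|\nabla Eu|(\partial\Omega)=0$. Your $\liminf$ sketch by duality is correct and matches Theorem~\ref{thm:liminf-BBM}.
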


 \noindent We highlight that the counterexample \ref{Ex:counterexample-extension} shows that the conclusion of Theorem \ref{thm:BBM-limit-result} might be erroneous if $\Omega$ is not an extension domain. In one way of proving Theorem \ref{thm:BBM-limit-result}, we establish the following sharp version of the estimates in \eqref{eq:gradient-estimate} (see Theorem \ref{thm:liminf-BBM}) 
 \begin{align}\label{eq:gradient-estimate-sharp}
 \|\nabla u\|^p_{L^p(\Omega)}\leq \frac{A_p}{K_{d,p}}\qquad\text{and}\qquad | u|_{BV(\Omega)}\leq \frac{A_1}{K_{d,1}}. 
 \end{align}
 
 \noindent Indeed, Theorem \ref{thm:BBM-limit-result} shows that the estimates in \eqref{eq:gradient-estimate-sharp} turn into equalities provided that $\Omega$ is an extension domain. 
As immediate consequences of Theorem \ref{thm:liminf} and Theorem \ref{thm:BBM-limit-result} we have the following characterizations for the spaces $W^{1,p}(\Omega)$ and $BV(\Omega)$ when $\Omega$ is an extension domain. 

\begin{theorem}\label{thm:charact-w1p}
Assume $\Omega\subset \R^d$ is a $W^{1,p}$-extension domain, $p>1$ and let $u\in L^p(\Omega)$. Then $u\in W^{1,p}(\Omega)$ if and only if $A_p<\infty.$ Moreover, we have
		\begin{align*}
	&\lim_{\varepsilon\to 0}\iint\limits_{\Omega\Omega}|u(x)-u(y)|^p\nu_\varepsilon(x-y) \d y\d x = K_{d,p} \|\nabla u\|^p_{L^p(\Omega)}. 
	\end{align*}
\end{theorem}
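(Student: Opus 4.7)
The plan is to derive Theorem \ref{thm:charact-w1p} as a direct corollary of the two main theorems above; no fresh analytic work is required. Since $1<p<\infty$, the $BV$-alternative for $p=1$ never enters the picture, so the equivalence is clean-cut and the argument is purely a matter of combining existing results.

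For the implication $A_p<\infty \Rightarrow u\in W^{1,p}(\Omega)$, I simply invoke Theorem \ref{thm:liminf}: the hypotheses $u\in L^p(\Omega)$ and \eqref{eq:liminf-condition} are exactly what that theorem requires, and it produces $u\in W^{1,p}(\Omega)$ along with the bound $\|\nabla u\|_{L^p(\Omega)} \leq d^2 A_p^{1/p}/K_{d,1}$ (later sharpened to $\|\nabla u\|^p_{L^p(\Omega)} \leq A_p/K_{d,p}$ in the spirit of \eqref{eq:gradient-estimate-sharp}). It is worth observing that this half of the equivalence does \emph{not} use the extension property of $\Omega$ at all.

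Conversely, if $u\in W^{1,p}(\Omega)$, then since $\Omega$ is a $W^{1,p}$-extension domain, Theorem \ref{thm:BBM-limit-result} applies and yields
\begin{align*}
\lim_{\varepsilon\to 0}\iint\limits_{\Omega\Omega} |u(x)-u(y)|^p\nu_\varepsilon(x-y)\d y\d x = K_{d,p}\|\nabla u\|^p_{L^p(\Omega)}.
\end{align*}
This single conclusion does double duty: it is exactly the ``moreover'' equality in the statement, and since the right-hand side is finite, the liminf $A_p$ is finite as well. The only conceptual point worth flagging is that $A_p$ is \emph{a priori} a liminf, so one must note that once the full limit exists and is finite the liminf coincides with it; this is a triviality rather than a real obstacle. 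The extension hypothesis is essential precisely for this backward implication: Counterexample \ref{Ex:counterexample-extension} exhibits a $u\in W^{1,p}(\Omega)$ for which $A_p=\infty$ when $\Omega$ fails to be an extension domain, showing that the theorem cannot be upgraded without this geometric assumption.
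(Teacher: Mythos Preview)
Your proposal is correct and matches the paper's own approach: the paper presents Theorem~\ref{thm:charact-w1p} explicitly as a ``noteworthy consequence'' of Theorem~\ref{thm:liminf} and Theorem~\ref{thm:BBM-limit-result} without giving a separate proof. One small remark: Theorem~\ref{thm:BBM-limit-result} already applies to every $u\in L^p(\Omega)$ (not only to $u\in W^{1,p}(\Omega)$), so the ``moreover'' equality holds in full generality directly from that theorem, covering the case $u\notin W^{1,p}(\Omega)$ where both sides equal $\infty$.
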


%\smallskip 
\begin{customthm}{\ref{thm:charact-w1p}'}\label{thm:charact-BV}
\textit{ Assume $\Omega\subset \R^d$ is a $BV$-extension domain, $p=1$, and let $u\in L^1(\Omega)$. Then $u\in BV(\Omega)$ if and only if $A_1<\infty$. Moreover, we have 
	\begin{align*}
	&\lim_{\varepsilon\to 0}\iint\limits_{\Omega\Omega}|u(x)-u(y)|\nu_\varepsilon(x-y) \mathrm{d}y\mathrm{d}x = K_{d,1} |u|_{BV(\Omega)}. 
	\end{align*} }
\end{customthm}

\noindent In contrast to the forms of type $\mathcal{E}^1_\Omega$, the collapse phenomenon across $\partial\Omega$ occurs for the forms of type $\mathcal{E}^2_\Omega$ or $\mathcal{E}^3_\Omega$ in Theorem \ref{thm:BBM-limit-result}. 

%\noindent The collapse phenomenon across $\partial\Omega$ occurs if we usethe forms of type $\mathcal{E}^2_\Omega$ or $\mathcal{E}^3_\Omega$ in Theorem \ref{thm:BBM-limit-result}. 
 
\begin{theorem}\label{thm:collapsing-accross-boundary}
Assume  that $(i)$ $\Omega\subset \R^d$ is a $W^{1,p}$-extension domain 
or that $(ii)$ $\R^d\setminus\overline{\Omega}$ is a $W^{1,p}$-extension domain  and $\partial\Omega= \partial \overline{\Omega}$. For $u\in W^{1,p}(\R^d)$, $p\geq 1$, there hold the following limits:
%or $u\in BV(\R^d)$ for $p=1$. Let us  we abuse the notation $\|\nabla u\|_{L^1(\R^d)} =|u|_{BV(\R^d)}$. 
\begin{align*}
&\lim_{\eps\to 0}\iil_{ \Omega \Omega^c} |u(x)-u(y)|^p\nu_\eps(x-y)\d y\d x= 0,\\ %\label{eq:form-conv-accross}\\
&\lim_{\eps\to 0}\iil_{ \Omega \Omega} |u(x)-u(y)|^p\nu_\eps(x-y)\d y\d x= K_{d,p}\|\nabla u\|^p_{L^p(\Omega)},\\
&\lim_{\eps\to 0}\iil_{ \Omega \R^d} |u(x)-u(y)|^p\nu_\eps(x-y)\d y\d x= K_{d,p}\|\nabla u\|^p_{L^p(\Omega)},\\ 
& \lim_{\eps\to 0} \hspace{-1ex}\iil_{ (\Omega^c \times\Omega^c)^c} \hspace{-2ex}|u(x)-u(y)|^p\nu_\eps(x-y)\d y\d x= K_{d,p}\|\nabla u\|^p_{L^p(\Omega)}. 
	\end{align*}
Moreover, for $u\in BV(\R^d)$ and  $p=1$, the above limits remain true provided that $|\nabla u|(\partial\Omega)=0$.
% $\int_{\partial\Omega}\hspace{-0.5ex}\d |\nabla u|=0$. 
\end{theorem}
%\smallskip 

\begin{proof} %as  the boundary of an extension domains is a null set.  
In fact, in both cases $(i)$ and $(ii)$ we have  $|\partial\Omega|=|\partial\overline{\Omega}|=0$. Thus, Theorem \ref{thm:collap-bdary} yields the first limit.  For the case $(ii)$, the remaining limits follow from Theorem \ref{thm:BBM-limit-result} since $\Omega\times \Omega=(\R^d\times \R^d)\setminus[\Omega^c\times \Omega\cup\Omega\times \Omega^c\cup\Omega^c\times \Omega^c]$, $\Omega\times \R^d= \Omega\times \Omega\cup \Omega\times \Omega^c$ and   $(\Omega^c\times \Omega^c)^c =(\R^d\times \R^d)\setminus(\Omega^c\times \Omega^c)$. The case $(i)$ is obtain by interchanging $\Omega$ and $\Omega^c$. The  situation $u\in BV(\R^d)$ is analogous. 
%
%Since we assume the condition \eqref{eq:bv-boundary-extension} for BV-extension domains, 
%The remaining limits follow from  the situation where $\R^d$, $\Omega$ and/or $\R^d\setminus\overline{\Omega}$ are $W^{1,p}$-extension domains. Indeed, the case $(i)$ follows since $ (\Omega^c\times \Omega^c)^c =\Omega\times\Omega\cup \Omega\times \Omega^c\cup \Omega^c\times \Omega $  and $\Omega\times \R^d= \Omega\times \Omega\cup \Omega\times \Omega^c$.
%The case $(ii)$ is analogous since   $\Omega\times \Omega=(\R^d\times \R^d)\setminus[\Omega^c\times \Omega\cup\Omega\times \Omega^c\cup\Omega^c\times \Omega^c]$ and   $(\Omega^c\times \Omega^c)^c =(\R^d\times \R^d)\setminus(\Omega^c\times \Omega^c)$.
%
%\noindent To derive Theorem \ref{thm:collapsing-accross-boundary}, it suffices to apply Theorem \ref{thm:BBM-limit-result} for the domains $\R^d,~\Omega$ and $ \Omega^c$, which are of course extension domains, by observing that $\Omega\times \Omega^c\cup \Omega^c\times \Omega= (\R^d\times \R^d)\setminus[\Omega\times \Omega \cup \Omega^c\times \Omega^c]$, $\Omega\times \R^d= \Omega\times \Omega\cup \Omega\times \Omega^c$ and $ (\Omega^c\times \Omega^c)^c = (\R^d\times \R^d)\setminus(\Omega^c\times \Omega^c)$.
\end{proof}
%
%\smallskip

\noindent  %Note that the limits in Theorem \ref{thm:collapsing-accross-boundary} remain true when $\Omega$ replaced with $\Omega^c$. 
The next result, is an alternative to Theorem \ref{thm:BBM-limit-result} if $\Omega$ is not an extension domain.

\begin{theorem}\label{thm:weak-convergence}
Let $\Omega\subset \R^d$ be open. Let $u\in W^{1,p}(\Omega)$ and define the Radon measures 
	\begin{align*}
\d \mu_{\varepsilon}(x) = \int_{\Omega}|u(x)-u(y)|^p\nu_\varepsilon(x-y) \mathrm{d}y\d x.
\end{align*}
The sequence $(\mu_{\varepsilon})_\eps$
converges weakly on $\Omega$ (in the sense of Radon measures) to the Radon measure $\d \mu(x) =K_{d,p}|\nabla u(x)|^p\d x$, 
 i.e. $ \mu_\varepsilon (E)\xrightarrow{\eps\to 0}\mu(E)$ for every compact set $E\subset \Omega$. Moreover, if $p=1$ and $u\in BV(\Omega)$ then $\d \mu(x) =K_{d,1}\d |\nabla u|(x)$. 
\end{theorem}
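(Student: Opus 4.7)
The plan is to reduce the weak convergence statement to Theorem~\ref{thm:BBM-limit-result} applied on bounded Lipschitz subdomains of $\Omega$---which are automatically $W^{1,p}$- and $BV$-extension domains---and to use Theorem~\ref{thm:collapsing-accross-boundary} to kill the interaction across the subdomain boundary. Since bounded Lipschitz open sets $U$ with $\overline U\subset\Omega$ are cofinal among compact subsets of $\Omega$, it suffices to prove $\mu_\eps(U)\to\mu(U)$ for each such $U$; the case of an arbitrary compact $E\subset\Omega$ then follows by outer and inner approximation in $\mu$.

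Fix $U\Subset W\Subset\Omega$ with $U$, $W$ bounded Lipschitz domains (such $W$ exists since $\overline U$ is compact in the open set $\Omega$), and abbreviate $I_\eps(A\times B):=\iint_{A\times B}|u(x)-u(y)|^p\nu_\eps(x-y)\d x\d y$. Decompose
\begin{align*}
\mu_\eps(U)=I_\eps(U\times U)+I_\eps\bigl(U\times(W\setminus U)\bigr)+I_\eps\bigl(U\times(\Omega\setminus W)\bigr).
\end{align*}
For the first term, Theorem~\ref{thm:BBM-limit-result} applied to the extension domain $U$ and $u|_U\in W^{1,p}(U)$ (resp.\ $u|_U\in BV(U)$) yields $I_\eps(U\times U)\to K_{d,p}\int_U|\nabla u|^p\d x$ (resp.\ $K_{d,1}|\nabla u|(U)$, using $|u|_{BV(U)}=|\nabla u|(U)$ since $U$ is open). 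For the third term, any $(x,y)\in U\times(\Omega\setminus W)$ satisfies $|x-y|\geq\delta:=\dist(\overline U,\partial W)>0$; combining $|u(x)-u(y)|^p\leq 2^{p-1}(|u(x)|^p+|u(y)|^p)$ with the tail bound $\int_{|h|>\delta}\nu_\eps(h)\d h\leq\min(1,\delta^p)^{-1}\int_{|h|>\delta}(1\land|h|^p)\nu_\eps(h)\d h\to 0$ furnished by \eqref{eq:plevy-approx} shows that $I_\eps(U\times(\Omega\setminus W))=o(1)$.

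The main obstacle is the middle term $I_\eps(U\times(W\setminus U))$, whose mass concentrates near $\partial U$ and is not controlled by a BBM-type limit on any single domain. To absorb it, invoke the extension property of $W$ to obtain $\tilde u\in W^{1,p}(\R^d)$ (resp.\ $\tilde u\in BV(\R^d)$) with $\tilde u=u$ a.e.\ on $W$. Since $U$ has compact Lipschitz boundary, Theorem~\ref{thm:collapsing-accross-boundary}---or the analogous $BV$ collapse, itself a direct consequence of Theorem~\ref{thm:BBM-limit-result} applied to $\R^d$, $U$ and $U^c$ after perturbing $U$ (if necessary) so that $|\nabla\tilde u|(\partial U)=0$---gives
\begin{align*}
\iint_{U\times U^c}|\tilde u(x)-\tilde u(y)|^p\nu_\eps(x-y)\d x\d y\xrightarrow[\eps\to 0]{}0.
\end{align*}
Because $W\setminus U\subset U^c$ and $\tilde u=u$ on $U\cup(W\setminus U)\subset W$, this majorizes $I_\eps(U\times(W\setminus U))$, which therefore vanishes. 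Summing the three limits gives $\mu_\eps(U)\to\mu(U)$, and approximating a general compact $E\subset\Omega$ by Lipschitz neighborhoods whose symmetric difference with $E$ has arbitrarily small $\mu$-mass concludes the proof.
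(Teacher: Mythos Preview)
Your argument is correct and takes a genuinely different route from the paper's proof. The paper establishes $\mu_\eps(E)\to\mu(E)$ for a compact $E$ by proving two separate inequalities: the upper bound comes from the localized split estimate \eqref{eq:split-estimate1}, which controls $\mu_\eps(E)$ by $K_{d,p}\int_{E(\delta)}\d|\nabla u|^p$ plus a tail; the lower bound is obtained by invoking Theorem~\ref{thm:liminf-BBM} on the interior of $E$. You instead fix a bounded Lipschitz $U\Subset\Omega$, split $\mu_\eps(U)$ into three pieces, and identify each limit by quoting the already-proved main results---Theorem~\ref{thm:BBM-limit-result} on $U$ for the diagonal block, Theorem~\ref{thm:collapsing-accross-boundary} for the cross term, and the concentration property for the far-away part. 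This is more modular: you never redo any estimate, you just assemble black boxes. The price is that you rely on the collapsing result Theorem~\ref{thm:collapsing-accross-boundary}, which itself is a corollary of Theorem~\ref{thm:BBM-limit-result}; the paper's proof is in that sense more self-contained.

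Two small points deserve care. First, in the $BV$ case the order of choices matters: you should fix the outer Lipschitz set $W$ and the extension $\tilde u\in BV(\R^d)$ \emph{before} choosing $U$, so that $U$ can be selected with $|\nabla\tilde u|(\partial U)=0$ (the sets $\{\dist(\cdot,E)=t\}$ are disjoint, so all but countably many $t$ work). Your phrase ``perturbing $U$ if necessary'' is right in spirit but reads as if $U$ were already fixed. Second, the last line---passing from Lipschitz $U$ to an arbitrary compact $E$ by ``outer and inner approximation''---only yields the upper bound $\limsup_\eps\mu_\eps(E)\le\mu(E)$ directly; inner approximation of a compact set by closures of Lipschitz opens can fail (e.g.\ when $E$ has empty interior). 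For the lower bound one still needs either $\mu(E\setminus\operatorname{int}E)=0$ or an appeal to Theorem~\ref{thm:liminf-BBM} on $\operatorname{int}E$. The paper's own proof has exactly the same informality at this step, so this is not a defect of your approach relative to the paper, but it is worth stating precisely what class of compacta your argument covers.
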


\smallskip

\noindent Note in particular that, Theorem \ref{thm:weak-convergence} implies that $(\mu_\eps)_\eps$ \emph{}vaguely convergence to $\mu$, i.e., 
\begin{align*}
\lim_{\eps \to 0}\int_\Omega \varphi(x)\d \mu_\eps(x) =\int_\Omega \varphi(x)\d \mu(x),\quad\text{for every $\varphi\in C_c^\infty(\Omega)$}.
\end{align*}

 % viz., they proved that a function $u\in L^p(\Omega)$ lies in $W^{1,p}(\Omega)$ for $1<p<\infty$ or in $ BV(\Omega)$ for $p=1$ provided that 
 %\begin{align}\label{eq:BBM-liminf-condition}
 %\liminf_{\varepsilon\to 0} \iil_{\Omega\Omega } \frac{|u(x)-u(y)|^p}{|x-y|^p}\rho_\varepsilon(x-y)\d x\d y <\infty\,. 
 %\end{align}
 %\begin{align}\label{eq:BBM-limit}
 %\lim_{\varepsilon\to 0}\iil_{\Omega\Omega } \frac{|u(x)-u(y)|^p}{|x-y|^p}\rho_\varepsilon(x-y)\d x\d y = K_{d,p}\|\nabla u\|^p_{L^p(\Omega)}. 
 %\end{align} 
 
Let us comment on Theorem \ref{thm:liminf} and Theorem \ref{thm:BBM-limit-result}  and related results in the literature. Bourgain-Brezis-Mironescu \cite[Theorem 3' \& Theorem 2]{BBM01} proved the characterization Theorem \ref{thm:liminf}  under the stronger condition that $\Omega\subset \R^d$ is bounded Lipschitz and while considering the sub-class $\nu_\eps(h) =c_\eps|h|^{-p}\rho_\eps (h)$ where $(\rho_\eps)_\eps$ satisfies \eqref{eq:approx-dirac}. Beside this, with the same assumptions, Bourgain-Brezis-Mironescu in \cite[Theorem 2]{BBM01} also established the relation \eqref{eq:w1p-BBM-limit}. The case $\Omega=\R^d$ is also investigated by Brezis \cite{Brezis-const-function} while characterizing constant functions.
The case $p=1$, i.e., the relation \eqref{eq:bv-BBM-limit}, is also a natural subject of discussions in \cite{BBM01} wherein, the authors succeeded in the one dimensional setting when $\Omega=(0,1)$, viz., they proved that 
\begin{align*}
\int_0^1\int_0^1 \frac{|u(x)-u(y)|}{|x-y|}\rho_\eps(x-y)\d y\d x= K_{1,1}|u|_{BV(0,1)}\quad\text{for all } u\in BV(0,1).
\end{align*}
The general case $d\geq 2$ was completed later in \cite{Dav02} when $\Omega$ is a bounded Lipschitz domain. In this perspective, \cite[Lemma 2]{Dav02} also established a variant of Theorem \ref{thm:weak-convergence} for the case $p=1$. Clearly, our setting of Theorem \ref{thm:liminf} is more general as no restriction on $\Omega$ is required and, in the sense mentioned above, the class $(\nu_\eps)_\eps$ satisfying \eqref{eq:plevy-approx} is strictly lager than that of $(\rho_\eps)_\eps$ satisfying \eqref{eq:approx-dirac}. In addition, in contrast to \cite{BBM01}, $\Omega$ is not necessarily bounded in Theorem \ref{thm:BBM-limit-result} and that the situation where $\Omega$ has a Lipschitz boundary appears as a particular case of Theorem \ref{thm:BBM-limit-result}.
We point out that Theorem \ref{thm:BBM-limit-result} is reminiscent of \cite[Theorem 3.4]{FGKV19} for $p=2$. Ultimately, let us mention that, after the release of the first version of this work, the authors of \cite{BMR20} brought to our attention that they also established the relation \eqref{eq:w1p-BBM-limit} when $\nu_\eps(h) =\eps |h|^{-d-(1-\eps)p}$ (fractional kernels) for $1<p<\infty$. The case $p=1$ is, however, not fully covered therein. Our approach in this paper extends the works from \cite{BBM01, Brezis-const-function,Dav02,Pon04}. In the wake of \cite{BBM01}, several works regarding the characterization of Sobolev spaces and alike spaces have appeared in literature in the recent years. For example \cite{Pon04-gamma,Lud14} for characterization of Sobolev spaces via families of anisotropic interacting kernels, \cite{PS17,LS11} for characterization of BV spaces, \cite{MS02} for a study of asymptotic sharp fractional Sobolev inequality, \cite{Bra18} for characterization of Besov type spaces of higher order and \cite{FGKV19} for the study of Mosco convergence of nonlocal quadratic forms. 

This article is organized as follows. In the second section we address some examples of approximating sequence $(\nu_\eps)_\eps$ and some nonlocal spaces in connection with function of type $\nu_\eps$. The third section is devoted to the proofs of Theorem \ref{thm:liminf}, Theorem \ref{thm:BBM-limit-result} and Theorem \ref{thm:weak-convergence}. 

Throughout this article, $\eps>0$ is a small quantity tending to $0$. We frequently use the convex inequality $(a+b)^p\leq 2^{p-1}(a^p+b^p)$ for $a>0, b>0$, the Euclidean scalar product of $x=(x_1,x_2,\cdots, x_d)\in \R^d $ and $ y=(y_1,y_2,\cdots, y_d)\in \R^d$ is $x\cdot y =x_1x_1+x_2y_2+\cdots+ x_dy_d$ and denote the norm of $x$ by $|x| =\sqrt{x\cdot x}$. The conjugate of $p\in [1,\infty)$ is denoted by $p'$, i.e., $p+p'=pp'$ with the convention $1'=\infty$. Throughout, $|\mathbb{S}^{d-1}|$ denotes the area of the $d-1$-dimensional unit sphere, where we adopt the convention that $|\mathbb{S}^{d-1}|=2$ if $d=1$.

%%%%%%%%%%%%%%%%%%%%%%%%%%%%%%%%%%%%%%%%%%%%%%%%%%%%%%%%%%%%%%%%%%%%%%%
\section{Preliminaries}\label{sec:approx-dirac-mass}
%%%%%%%%%%%%%%%%%%%%%%%%%%%%%%%%%%%%%%%%%%%%%%%%%%%%%%%%%%%%%%%%%%%%%%%

\subsection{$p$-L\'{e}vy integrability and approximation of Dirac measure}\label{sec:approx-dirac-prop}
%\noindent Let us coin the concept of $p$-L\'{e}vy measure. 
\begin{definition} \label{def:plevy-approx} %Let $1\leq p<\infty$.
\begin{enumerate}[$(i)$]
		\item 
	A nonnegative Borel measure $\nu(\d h)$ on $\R^d$ is called a $p$-L\'{e}vy measure if $\nu(\{0\})=0$ and it satisfies the $p$-L\'{e}vy integrability condition; that is to say 
	\begin{align*}
	\int_{\R^d} (1\land |h|^p)\nu(\d h)<\infty. 
	\end{align*}
\item 
		A family $(\nu_\varepsilon)_\varepsilon$ satisfying \eqref{eq:plevy-approx} is called \emph{a Dirac approximation of $p$-L\'{e}vy measures.}
	\end{enumerate}
	\end{definition}

\smallskip 

\noindent 	Patently, one recovers the usual definition of L\'{e}vy measures when $p=2$. Such measures are paramount in the study of stochastic process of L\'{e}vy type; see for instance  \cite{Sat13,App09,Ber96} for further details.  We intentionally omit the dependence of $\nu$ and $\nu_\eps$ on $p$. This dependence will be always clear from the context. The following result shows that by rescaling appropriately a radial $p$-L\'{e}vy integrable function $\nu(h)$ one obtains a family $(\nu_\varepsilon)_\varepsilon$ satisfying \eqref{eq:plevy-approx}.

\begin{proposition}\label{prop:rescalled}
%Assume $\nu : \R^d\setminus\{0\} \to [0, \infty)$ is a positive measurable function satisfying the $p$-L\'{e}vy integrability condition, i.e., 
Let  $\nu \in L^1( \R^d,1\land |h|^p)$ with $\nu\geq0$. Define the rescaled family $(\nu_\varepsilon)_\varepsilon$, as
	\begin{align}\label{eq:rescalled-levy-measure}
	\begin{split}
	\nu_\varepsilon(h) = 
	\begin{cases}
	\varepsilon^{-d-p}\nu\big(h/\varepsilon\big)& \text{if}~~|h|\leq \varepsilon,\\
	\varepsilon^{-d}|h|^{-p}\nu\big(h/\varepsilon\big)& \text{if}~~\varepsilon<|h|\leq 1,\\
	\varepsilon^{-d}\nu\big(h/\varepsilon\big)& \text{if}~~|h|>1.
	\end{cases}
	\end{split}
	\end{align}
\vspace{-1mm}
\noindent Then for every $\delta>0, \eps\in (0,1)$
	\begin{align*}
	&\int_{\R^d}(1\land |h|^p)\nu_\varepsilon(h)\d h= \int_{\R^d}(1\land |h|^p)\nu(h)\d h
	\quad \text{and}\quad \lim_{\varepsilon\to 0 } \int_{|h|>\delta }(1\land |h|^p)\nu_\varepsilon(h)\d h=0 \, .
	\end{align*}
\end{proposition}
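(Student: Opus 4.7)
The plan is to establish both identities by splitting $\R^d$ into the three annular regions that appear in the piecewise definition of $\nu_\eps$, performing the change of variables $h=\eps k$ on each piece, and then exploiting the fact that $1\land |h|^p=|h|^p$ when $|h|\leq 1$ and $1\land |h|^p=1$ otherwise. It is enough to treat $\eps\in(0,1]$, since only the limit $\eps\to 0$ is of interest and an analogous (in fact simpler) computation settles $\eps>1$.

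For the first identity I decompose $\int_{\R^d}(1\land|h|^p)\nu_\eps(h)\d h$ as the sum of the integrals over $\{|h|\leq\eps\}$, $\{\eps<|h|\leq 1\}$ and $\{|h|>1\}$. On the innermost region the integrand is $|h|^p\eps^{-d-p}\nu(h/\eps)$, and the substitution $h=\eps k$ collapses the prefactors $\eps^{p}\cdot \eps^{-d-p}\cdot \eps^{d}$ to $1$, producing $\int_{|k|\leq 1}|k|^p\nu(k)\d k$. On the middle region the integrand simplifies to $\eps^{-d}\nu(h/\eps)$, so the same substitution yields $\int_{1<|k|\leq 1/\eps}\nu(k)\d k$. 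On the outer region the integrand is likewise $\eps^{-d}\nu(h/\eps)$, giving $\int_{|k|>1/\eps}\nu(k)\d k$. Since $|k|\geq 1$ in the last two contributions one may rewrite $\nu(k)=(1\land|k|^p)\nu(k)$ there, and summing the three pieces reassembles $\int_{\R^d}(1\land|k|^p)\nu(k)\d k$, which is the desired equality.

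For the second identity I fix $\delta>0$ and restrict to $\eps<\min(\delta,1)$, so that the innermost region $\{|h|\leq\eps\}$ is disjoint from $\{|h|>\delta\}$. Repeating the change of variables on the two remaining pieces shows that $\int_{|h|>\delta}(1\land|h|^p)\nu_\eps(h)\d h$ equals $\int_{|k|>\delta/\eps}(1\land|k|^p)\nu(k)\d k$. Since $\nu$ is $p$-L\'evy integrable and $\delta/\eps\to\infty$ as $\eps\to 0$, this tail vanishes, either by dominated convergence against the integrable dominant $(1\land|k|^p)\nu(k)$ or by the absolute continuity of the finite measure $(1\land|k|^p)\nu(k)\d k$.

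There is no genuine obstacle here; the argument is pure bookkeeping. The only delicate point is that the three distinct scaling factors in \eqref{eq:rescalled-levy-measure} must interlock correctly with the cutoff $1\land|h|^p$ across the boundaries $|h|=\eps$ and $|h|=1$. The piecewise definition of $\nu_\eps$ is engineered precisely so that the factor $|h|^p$ in the $p$-L\'evy weight is absorbed on the inner and middle annuli while the weight collapses to $1$ on the outer annulus, producing the clean telescoping cancellation under rescaling.
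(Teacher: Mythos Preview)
Your proof is correct and follows exactly the approach of the paper: a change of variables $h=\eps k$ on each annulus to reduce the first identity to a telescoping sum, and the same substitution to rewrite the tail integral as $\int_{|k|>\delta/\eps}(1\land|k|^p)\nu(k)\d k$, which vanishes by dominated convergence. The paper's own proof records only this last step and dismisses the first identity as ``straightforward computations''; you have simply written out those computations in full.
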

\begin{proof}
Since $\nu \in L^1(\R^d, 1\land |h|^p)$ the dominated convergence theorem yields
	\begin{align*}
	\lim_{\varepsilon\to 0 } \int_{|h|>\delta }(1\land |h|^p)\nu_\varepsilon(h)\d h=\lim_{\varepsilon\to 0 } \int_{|h|>\delta/\varepsilon }(1\land |h|^p)\nu(h)\d h=0.
	\end{align*}
	We omit the remaining details as it solely involves straightforward computations.
\end{proof}

\smallskip

\noindent The behavior of the rescaled family $(\nu_\varepsilon)_\varepsilon$ in \eqref{eq:rescalled-levy-measure},  $p=2$, is governed by two keys observations. 
The first is that it gives raise to a family of L\'{e}vy measures with a concentration property. Secondly, from a probabilistic point of view, one obtains a family of pure jumps L\'{e}vy processes $(X_\varepsilon)_\varepsilon$ each associated with the measure $\nu_\varepsilon(h)\mathrm{d} h$ from a L\'{e}vy process $X$ associated with $\nu(h)\mathrm{d} h$. In fact, the family of stochastic processes $(X_\varepsilon)_\varepsilon$ converges in finite dimensional distributional sense (see \cite{FGKV19}) to a Brownian motion provided that one in addition assumes that $\nu$ is radial. Proposition \ref{prop:dirac-approx} $(ii)$ below shows that the generator of the process $X_\eps$ denoted $L_\eps$ (see \eqref{eq:integro-diff}), converges to $-\frac{1}{2d}\Delta$ which is the generator of a Brownian motion. In short, rescaling via \eqref{eq:rescalled-levy-measure} any isotropic pure jump L\'{e}vy process leads to a Brownian motion. This, could be one more argument to back up the ubiquity of the Brownian motion. The convergence highlighted above is involved in a more significant context. For example in \cite{FGKV19}, the convergence in Mosco sense of the Dirichlet forms associated with process in play is established. Beside these observations, the works \cite{Fog20, FK22} establish that if $\Omega$ is bounded with a Lipschitz boundary and $u_\eps$ satisfies in the weak sense nonlocal problems of the $L_\eps u_\eps= f$ in $\Omega$ augmented with Dirichlet condition $u_\eps=0$ on $\Omega^c$ (resp. Neumann condition $\mathcal{N}u_\eps=0$ on $\Omega^c$) condition then $(u_\eps)_\eps$ converges in $L^2(\Omega)$ to some $u\in W^{1,2}(\Omega)$, where $u$ is the weak solution  to the local problem $-\frac{1}{2d}\Delta u= f$ in $\Omega$ augmented with Dirichlet boundary $u=0$ on $\partial\Omega$ (resp. Neumann condition $\frac{\partial u}{\partial n} =0$ on $\partial \Omega$). Here,  $L_\eps$ is given by \eqref{eq:integro-diff} and $\mathcal{N}_\eps$ is defined by
 \begin{align*}
\mathcal{N}_\eps u(x):= \int_{\Omega} (u(x)-u(y))\nu_\eps(x-y)\d y. 
\end{align*}

%\smallskip

\begin{remark}\label{rem:asymp-nu}
Assume the family $(\nu_\eps)_\eps$ satisfies \eqref{eq:plevy-approx}.
%\begin{enumerate}[$(i)$]
%	\item 
	Let $\beta\in \R,$ then for all $R>0$ we have
\begin{align*}
\lim_{\varepsilon\to 0}\int_{|h|\leq R} (1\land |h|^\beta)\nu_\varepsilon(h)\mathrm{d}h =
\begin{cases}
0\quad \text{ if}\quad \beta >p\\
1\quad \text{ if}\quad \beta =p\\
\infty \quad \text{if}\quad \beta <p.
\end{cases}
\end{align*}
Indeed, for fixed $\delta>0$, \eqref{eq:plevy-approx} implies
\begin{align*}
&\lim_{\varepsilon \to 0} \il_{ \delta<|h|\leq R} (1\land |h|^p) \nu_\varepsilon(h) \mathrm{d}h
\leq \lim_{\varepsilon \to 0} \il_{ |h|>\delta} (1\land |h|^p) \nu_\varepsilon(h) \mathrm{d}h=0,
\\
&\lim_{\varepsilon \to 0}\int_{ |h|\leq \delta} (1\land |h|^p) \nu_\varepsilon(h) \mathrm{d}h
=1- \lim_{\varepsilon \to 0} \int_{ |h|> \delta} (1\land |h|^p) \nu_\varepsilon(h) \mathrm{d}h=1. 
\end{align*}
	Thus for $\beta>p$ we have 
	\begin{align*}
	\lim_{\varepsilon \to 0}\il_{|h|\leq R} \hspace{-1ex}(1\land |h|^\beta)\nu_\varepsilon(h)\mathrm{d}h
	&\leq \lim_{\varepsilon \to 0} \Big( R^{\beta-p} \hspace{-3ex}\il_{\delta< |h|\leq R} \hspace{-2ex}(1\land |h|^p)\nu_\varepsilon(h) \mathrm{d}h + \delta^{\beta-p} \hspace{-2ex}\il_{|h|\leq\delta}\hspace{-1ex} (1\land |h|^p)\nu_\varepsilon(h) \mathrm{d}h\Big) = \delta^{\beta-p}.
	\end{align*}
	Likewise for $\beta<p$ we have 
	\begin{align*}
	\lim_{\varepsilon \to 0}\il_{|h|\leq R} \hspace{-1ex}(1\land |h|^\beta)\nu_\varepsilon(h)\mathrm{d}h
	&\geq \lim_{\varepsilon \to 0} \Big( R^{\beta-p} \hspace{-3ex}\il_{\delta< |h|\leq R} \hspace{-2ex}(1\land |h|^p)\nu_\varepsilon(h) \mathrm{d}h + \delta^{\beta-p} \hspace{-2ex}\il_{|h|\leq\delta}\hspace{-1ex} (1\land |h|^p)\nu_\varepsilon(h) \mathrm{d}h\Big) = \delta^{\beta-p}.
	\end{align*}
In either case, letting $\delta \to 0$ provides the claim.
\end{remark}

\begin{remark}\label{rem:asymp-nu-bis}Assume the family $(\nu_\eps)_\eps$ satisfies \eqref{eq:plevy-approx}. Note that the relation 
	\begin{align}\label{eq:concentration-prop}
\lim_{\varepsilon\to 0}\int_{|h|> \delta }(1\land |h|^p)\nu_\varepsilon(h)\d h=0,
	\end{align}
is often known as the concentration property and is merely equivalent to 
	\begin{align*}
	\lim_{\varepsilon\to 0}\int_{|h|>\delta }\nu_\varepsilon(h)\,\d h=0,\quad\text{for all}\quad \delta >0.
	\end{align*}
\noindent Indeed, for all $\delta >0$ we have
\begin{align*}
\int_{|h|>\delta }(1\land |h|^p)\nu_\varepsilon(h)\,\d h\leq \int_{|h|> \delta }\nu_\varepsilon(h)\,\d h\leq (1\land \delta^p)^{-1}\hspace{-1ex}\int_{|h|> \delta }(1\land |h|^p)\nu_\varepsilon(h)\,\d h.
\end{align*}

Consequently, for all $\delta >0$ we also have 
	\begin{align}\label{eq:concentration-bis}
	\lim_{\varepsilon\to 0}\int_{|h|\leq \delta }(1\land |h|^p)\nu_\varepsilon(h)\,\d h= \lim_{\varepsilon\to 0}\int_{|h|\leq \delta }|h|^p\nu_\varepsilon(h)\,\d h=1.\quad
	\end{align}
%\end{enumerate} 
\end{remark}

\smallskip

\noindent The next result infers certain some convergences  of the family $(\nu_\varepsilon)_\varepsilon$ for the case $p=1$ and $p=2$. 
% thus somehow  withstands Definition \ref{def:plevy-approx} $(ii)$.
%
\begin{proposition}\label{prop:dirac-approx} Consider the family $(\nu_\eps)_\eps$ satisfying \eqref{eq:plevy-approx}.
	
\begin{enumerate}[$(i)$]
\item If $p=1$ then we have $\langle\nu_\varepsilon, \varphi-\varphi(0) \rangle\xrightarrow{\varepsilon\to 0}0$  for every $\varphi\in C_c^\infty(\R^d)$.
%Here $\delta_0$ stands for the Dirac mass at the origin.

\item If $p=2$ then for a function $u: \R^d\to \mathbb{R}$ which is bounded and $C^2$ on a neighborhood of $x$, 
 $$\lim_{\eps \to 0} L_\eps u(x) =-\frac{1}{2d}\Delta u(x),$$
 where $\Delta$ is the Laplace operator and $L_\eps$ is the integrodifferential operator
 \begin{align}\label{eq:integro-diff}
 L_\eps u(x) := -\frac{1}{2}\int_{\R^d }(u(x+h)+ u(x-h) -2u(x)) \nu_{\eps}(h)\,\d h.
 \end{align}
\end{enumerate}
\end{proposition}

\begin{proof}
	$(i)$ Let $\varphi\in C_c^\infty(\R^d)$ using the fundamental theorem of calculus we can write 
	\begin{align*}
	\langle\nu_\varepsilon, \varphi-\varphi(0)\rangle &=\int_{\R^d}(\varphi(h)-\varphi(0))\, \nu_\varepsilon(h)\d h \\
	&=  \int_{|h|\leq 1}(\varphi(h)-\varphi(0)-\nabla \varphi(0)\cdot h)\, \nu_\varepsilon(h) \d h + \int_{|h|\geq 1}(\varphi(h)- \varphi(0))\nu_\varepsilon(h)\d h\\
	&=\int_{|h|\leq 1}\int_0^1\int_0^1 s((D^2 \varphi(tsh)\cdot h)\cdot h)\, \nu_\varepsilon(h) \d s\d t\d h + \int_{|h|\geq 1}(\varphi(h)- \varphi(0))\nu_\varepsilon(h)\d h .
	\end{align*}
	The conclusion clearly follows since 
	\begin{align*}
	\Big| \int_{|h|>1}(\varphi(h)- \varphi(0))\nu_\varepsilon(h)\d h\Big|\leq 2\|\varphi\|_{L^\infty(\R^d)} \int_{|h|> 1}\nu_\varepsilon(h)\d h\xrightarrow{\varepsilon\to 0}0
	\end{align*}
	and by Remark \ref{rem:asymp-nu} we have 
	\begin{align*}
	\Big|\int_{|h|\leq 1}\int_0^1\int_0^1 s((D^2 \varphi(tsh)\cdot h)\cdot h)\, \nu_\varepsilon(h) \d s\d t\, \d h\Big|\leq \|D^2\varphi\|_{L^\infty(\R^d)} \int_{|h|\leq 1}|h|^2\nu_\varepsilon(h)\d h\xrightarrow{\varepsilon\to 0}0.
	\end{align*}

\noindent $(ii)$ Note that $D^2u$ is bounded in a neighborhood of $x$. Hence, for $0<\delta<1$ sufficiently small, for all $|h|<\delta$ we have the estimate
		\begin{align*}
		\left|u(x+h)+u(x-h)-2u(x)\right|\leq 4(\|u\|_{ C_b(\R^d)}+\|D^2u\|_{ C(B_{4\delta}(x) })(1 \land |h|^2). 
		\end{align*}
The boundedness of $u$ implies that 
		\begin{align*}
		\lim_{\eps\to 0}\int_{|h|>\delta} \left|u(x+h)+u(x-h)-2 u(x)\right|\nu_\eps(h)\d h =4\|u\|_{L^\infty(\R^d)} \lim_{\eps\to 0}\int_{|h|>\delta} \nu_\eps(h)\d h=0.
		\end{align*}

\noindent Since the Hessian of $u$ is continuous at $x$, given $\eta>0$ we have $|D^2(x+z) -D^2 u(x)|<\eta $ for $|z|<4\delta$ with $\delta>0$ sufficiently small, Remark \ref{rem:asymp-nu} implies
	\begin{align*}
&\lim_{\eps\to 0}\frac{1}{2} \int_{0}^{1} \int_{0}^{1} 2t \int_{|h|\leq \delta} |[(D^2 u(x-th + 2sth)-D^2u(x) ) \cdot h]\cdot h |\nu_\eps(h)\d h \, \d s\d t\\
&\leq \frac{\eta}{2} \lim_{\eps\to 0} \int_{|h|\leq \delta}(1\land |h|^2)\nu_\eps(h)\d h
=\frac{\eta}{2}.
\end{align*} 
Thus, the leftmost expression vanishes since $\eta>0$ is arbitrarily. Next, by symmetry we have  $\int_{|h|\leq\delta} h_i h_j\nu_\eps(h)\d h=0$ for $i\neq j$. The rotation invariance of the Lebesgue measure implies

\begin{align*}
\int_{|h|\leq\delta} [D^2u(x)\cdot h] \cdot h ~\nu_\eps(h)\d h
& = \sum_{\stackrel{i\neq j}{i, j=1}}^{d} 
\int_{|h|\leq\delta} \hspace{-1ex}\partial_{ij}^2u(x) h_i h_j \nu_\eps(h)\d h+ 
\sum_{i=1}^{d} \partial_{ii}^2u(x) \int_{|h|\leq\delta} \hspace{-1ex}h_i^2 \nu_\eps(h)\d h\\
&= \Delta u(x) \int_{|h|\leq \delta}\hspace{-1ex}  h_1^2\nu_\eps(h)\d h
= \frac{1}{d}\Delta u(x) \int_{|h|< \delta} \hspace{-1ex} |h|^2\nu_\eps(h)\d h\\
&= \frac{1}{d} \Delta u(x)\int_{|h|\leq\delta} \hspace{-1ex} (1\land |h|^2)~\nu_\eps(h)\d h
\xrightarrow{\eps\to 0} \frac{1}{d} \Delta u(x). 
\end{align*}

\noindent Finally, by the fundamental theorem of calculus we find that
\begin{align*}
& -\frac{1}{2} \int_{|h|\leq \delta} \left(u(x+h)+u(x-h)-2(x)\right)\nu_\eps(h)\d h 
= -\frac{1}{2}\int_{|h|\leq \delta} [D^2u(x)\cdot h] \cdot h ~\nu_\eps(h)\d h 
\\ &-\frac{1}{2} \int_{0}^{1} \int_{0}^{1} 2t \int_{|h|\leq \delta} [D^2 u(x-t h + 2 st h)\cdot h-D^2 u(x) \cdot h] \cdot h ~\nu_\eps(h)\d h \, \d s\d t\xrightarrow{\eps\to 0} -\frac{1}{2d} \Delta u(x).
\end{align*}
\end{proof}

\noindent Let us give examples of $\nu_\eps$ satisfying \eqref{eq:plevy-approx}. 
The first  example is related to fractional Sobolev spaces.
 
\begin{example}\label{Ex: stable-class}
 The family $(\nu_\varepsilon)_\varepsilon$ of kernels defined for $h \neq 0$ by
	$$\nu_\varepsilon(h) = a_{\varepsilon, d,p} |h|^{-d-(1-\varepsilon)p}\quad\text{with }\quad a_{\varepsilon, d,p} = \frac{p\varepsilon(1-\varepsilon)}{|\mathbb{S}^{d-1}|}.$$ 
	
\end{example}

\noindent The next class of examples is that of Proposition \ref{prop:rescalled}.
\begin{example}
	Assume $\nu: \R^d \setminus\{0\}\to [0, \infty]$ is radial and consider the family $(\nu_\varepsilon)_\varepsilon$ such that each $\nu_\varepsilon$ is the rescaling of $\nu$ defined as in \eqref{eq:rescalled-levy-measure} provided that 
	\begin{align*}
	\int_{\R^d}(1\land |h|^p)\nu(h)\d h=1. 
	\end{align*}
A subclass is obtained if one considers an integrable radial function $\rho:\R^d \to [0,\infty]$ and defines $\nu(h) = c|h|^{-p} \rho(h)$ for a suitable normalizing constant $c>0$. 
\end{example}

%\smallskip
 
\begin{example}\label{Ex:rho-var}
	Assume $(\rho_\varepsilon)_\varepsilon$ is an approximation of the unity, i.e., satisfies \eqref{eq:approx-dirac}. For instance, define $\rho_\varepsilon (h) =\varepsilon^{-d} \rho(h/\varepsilon)$ where $\rho\geq 0$ is radial and $\int_{\R^d}\rho(h)\d h=1$. 
	Define the family $(\nu_\varepsilon)_\varepsilon$ by $\nu_\varepsilon(h) = c_\eps |h|^{-p} \rho_\varepsilon(h),$ where $c_\eps>0$ is a normalizing constant given by 
	\begin{align*}
	c_\eps^{-1}= \int_{|h|\leq 1} \rho_\eps(h)\d h+ \int_{|h|> 1} |h|^{-p}\rho_\eps(h)\d h, 
	\end{align*}
	 for which the $p$-L\'{e}vy integrability condition in \eqref{eq:plevy-approx} holds. Note that $c_\eps \to 1$ as $\eps\to0.$
\end{example}

\smallskip

%\noindent Let us collect some concrete examples of sequence $(\nu_\varepsilon)_\varepsilon$.
\begin{example} \label{Ex:example-poincre1} Let $0<\varepsilon < 1$ and $\beta >-d$. We define 
	\begin{align*}
	\nu_\varepsilon(h) = \frac{d+\beta}{ |\mathbb{S}^{d-1}|\varepsilon^{d+\beta}} |h|^{\beta-p}\mathds{1}_{B_{\varepsilon}}(h). 
	\end{align*} 
	
\noindent Some special cases are obtained for $ \beta \in \{0, \eps p-d,p\}$. 
For the limiting case $\beta=-d$, we  put
	\begin{align*}
	\nu_\varepsilon(h) = \frac{1}{|\mathbb{S}^{d-1}|\log(\varepsilon_0/\varepsilon )}|h|^{-d-p}\mathds{1}_{ B_{\varepsilon_0}\setminus B_\varepsilon}(h). 
	\end{align*} 

\end{example}

\begin{example} \label{item-example-poincre2}
	%%%%%%%%%%%%%% Important comment %%%%%%%%%%%%
	
% In all the examples Below to recover the constant $b_\varepsilon$ just apply successively the change of variables: Polar coordinates, $r= \varepsilon u$ and $t = \frac{1}1+u}$. To find the asymptotic behaviors just apply binomial formula to the term in $(1-t)\sim 1 $
Let $0<\varepsilon < \varepsilon_0<1$ and $\beta >-d$. Define
\begin{align*}
\nu_\varepsilon(h) = \frac{(|h|+\varepsilon)^{\beta}|h|^{-p}}{|\mathbb{S}^{d-1}| b_\varepsilon}\mathds{1}_{B_{\varepsilon_0}}(h)\qquad \hbox{with }\quad b_\varepsilon = \varepsilon^{d+\beta}\int^{1}_{\frac{\varepsilon}{\varepsilon+\varepsilon_0}} t^{-d-\beta-1}(1-t)^{d-1}\d t. 
\end{align*} 
For the limiting case $\beta=-d$ consider

\begin{align*}
\nu_\varepsilon(h) = \frac{(|h|+\varepsilon)^{-d}|h|^{-p}}{|\mathbb{S}^{d-1}| |\log\eps| b_\varepsilon}\mathds{1}_{B_{\varepsilon_0}(h)}\qquad \hbox{with }\quad b_\varepsilon = |\log\eps|^{-1}\int^{1}_{\frac{\varepsilon}{\varepsilon+\varepsilon_0}} t^{-1}(1-t)^{d-1}\d t. 
\end{align*}
	
\noindent In either case the constant $b_\varepsilon \to 1$ as $\eps\to 0$ and is such that $\int_{\R^d}(1\land |h|^p) \nu_\varepsilon(h) \d h =1$. 
Another example familiar to the case $\beta=-d$ is 
\begin{align*}
\nu_\varepsilon(h) = \frac{(|h|+\varepsilon)^{-d-p}}{|\mathbb{S}^{d-1}| |\log\eps| b_\varepsilon}\mathds{1}_{B_\varepsilon(h)}\qquad \hbox{ with }\quad b_\varepsilon = |\log\eps|^{-1} \int^{1}_{\frac{\varepsilon}{\varepsilon+\varepsilon_0}} t^{-1}(1-t)^{d+p-1}\d t. 
\end{align*} 
\end{example}

\subsection{Local and nonlocal spaces}
Let $\nu: \R^d\setminus\{0\}\to [0,\infty)$ be $p$-L\'{e}vy integrable and $\mathcal{E}^i_\Omega(u), \, i=1,2,3$ be the forms defined in \eqref{eq:nonlocal-form}. The space $\WnuOm= \big\{u \in L^p(\Omega)~:|u|^p_{\WnuOm}<\infty \big \} $ is a Banach space endowed with the norm $\|u\|_{\WnuOm}= \big(\|u\|^p_{L^{p} (\Omega)}+ |u|^p_{\WnuOm}\big)^{1/p}$ with $|u|^p_{\WnuOm}:=\mathcal{E}^1_\Omega(u)$.
%\begin{align*}
%\|u\|_{\WnuOm}= \Big(\|u\|^p_{L^{p} (\Omega)}+ \iil_{\Omega\Omega} \big|u(x)-u(y) \big|^p \, \nu (x-y) \mathrm{d}x\,\mathrm{d}y\Big)^{1/p}. 
%\end{align*}
%%
For the standard example $\nu(h)= |h|^{-d-sp}$, $s\in (0,1)$, one recovers the Sobolev space of fractional order denoted $W^{s,p}(\Omega)$;  see \cite{Hitchhiker,Fog20} for more. If $\nu$ has full support, the space $ \WnuOmR= \big\lbrace u: \R^d \to \R ~\text{meas.} : u\in L^p(\Omega)\,\,\text{and}\,\, |u|^p_{\WnuOmR}<\infty \big\rbrace$, $|u|^p_{\WnuOmR}=\mathcal{E}^3_\Omega(u)$, is a Banach space with the norm $\|u\|_{\WnuOmR}= \big( \|u\|^p_{L^{p} (\Omega)}+ |u|^p_{\WnuOmR}\big)^{1/p}$. See \cite{FGKV19,Fog21b,Fog20} for recent results involving this types of spaces. 
%
%\begin{align*}
%\|u\|_{\WnuOmR} &:= \Big( \|u\|^p_{L^{p} (\Omega)}+ \iil_{\Omega\R^d} \big|u(x)-u(y) \big|^p \, \nu (x-y) \mathrm{d}x \, \mathrm{d}y\Big)^{1/p}. 
%\end{align*}
%\begin{align*}
%\iil_{\Omega\R^d} \big|u(x)-u(y) \big|^p \, \nu (x-y) \mathrm{d}x \, \mathrm{d}y \asymp\iil_{(\Omega^c\times \Omega^c)^c} \big|u(x)-u(y) \big|^p \, \nu (x-y) \mathrm{d}x \, \mathrm{d}y. 
%\end{align*}and we have 
%\begin{align*}
%\iil_{(\Omega^c\times \Omega^c)^c} \big|u(x)-u(y) \big|^p \, \nu (x-y) \mathrm{d}x \, \mathrm{d}y &= \iil_{\R^d\R^d} \big|u(x)-u(y) \big|^p \,\max(\mathds{1}_\Omega(x), \mathds{1}_\Omega(y) )\nu (x-y) \mathrm{d}x \, \mathrm{d}y\, , \\
%\iil_{\Omega\R^d} \big|u(x)-u(y) \big|^p \, \nu (x-y) \mathrm{d}x \, \mathrm{d}y &= \frac{1}{2} \iil_{\R^d\R^d} \big|u(x)-u(y) \big|^p \, \big[\mathds{1}_\Omega(x)+\mathds{1}_\Omega(y) \big]\nu (x-y) \mathrm{d}x \, \mathrm{d}y\,.
%\end{align*}
\noindent We recall that,  $\frac{1}{2}\mathcal{E}^2_\Omega(u)\leq \mathcal{E}^3_\Omega(u)\leq \mathcal{E}^2_\Omega(u)$.
%since $ \max\big(\mathds{1}_\Omega(x),\mathds{1}_\Omega(y) \big)\leq \big[\mathds{1}_\Omega(x)+\mathds{1}_\Omega(y) \big]\leq 2\max\big(\mathds{1}_\Omega(x),\mathds{1}_\Omega(y) \big)$. 
\noindent It is noteworthy to mention that, the space $\big(\WnuOmR, \|\cdot\|_{\WnuOmR}\big)$ is the core energy space for a large class of nonlocal problems with Dirichlet, Neumann or Robin boundary conditions. See for instance \cite{FKV15,FK22,DFK22, DROV17, Ros16}. If $\Omega\subset \R^d$ has a sufficiently regular boundary or $\Omega=\R^d$ then according to Theorem \ref{thm:BBM-limit-result} and Theorem \ref{thm:collapsing-accross-boundary}, it is legitimate to say that the nonlocal spaces $\big (W_{\nu_\eps}^{p}(\Omega|\R^d), \|\cdot\|_{W_{\nu_\eps}^{p}(\Omega|\R^d)}\big)_\eps$ and $\big(W_{\nu_\eps}^{p}(\Omega), \|\cdot\|_{W_{\nu_\eps}^{p}(\Omega)}\big)_\eps$ converge to the Sobolev space $\big(W^{1,p}(\Omega), \|\cdot\|^*_{W^{1,p}(\Omega)}\big)$ and  $\big(BV(\Omega), \|\cdot\|^*_{BV(\Omega)}\big)$,  where
\begin{align*}
\|u\|^{*}_{W^{1,p}(\Omega)}=\big( \|u\|^{p}_{L^{p}(\Omega)}+K_{d,p}\|\nabla u\|^{p}_{L^p(\Omega) }\big)^{1/p}\quad\text{and}\quad \|u\|^{*}_{BV(\Omega)} = \|u\|_{L^{1}(\Omega)} +K_{d,1}|u|_{BV(\Omega) }. 
\end{align*}

\smallskip

 Let us recall the following standard approximation result for the space $BV(\Omega)$; see \cite[p.172]{EG15},\cite[Theorem 14.9]{Leo17} or  \cite[Theorem 3.9]{AFP00}. 
 \begin{theorem}%[\hspace*{-1ex}\textnormal{\cite[p.172]{EG15},\cite[Theorem 14.9]{Leo17}, \cite[Theorem 3.9]{AFP00}}]
 \label{thm:meyer-serrin-bv}
 Let $\Omega\subset\R^d$ be open and $u \in BV(\Omega)$. There is  a sequence $(u_n)_n$ in $ BV(\Omega)\cap C^\infty(\Omega)$ such that $\|u_n-u\|_{L^1(\Omega)}\xrightarrow{n \to \infty}0$ and $\|\nabla u_n\|_{L^1(\Omega)}\xrightarrow{n \to \infty}|u|_{BV(\Omega)} $. 
 \end{theorem}

 %
 %\begin{remark} 
 \noindent Warning: the above approximation theorem does not claim that $|u_n-u|_{BV(\Omega)}\xrightarrow{n\to \infty}0$ but rather implies that $\|u_n\|_{W^{1,1}(\Omega)}\xrightarrow{n \to \infty}\|u\|_{BV(\Omega)}$. Strictly speaking, $BV(\Omega)\cap C^\infty(\Omega)$ is not necessarily dense in $BV(\Omega)$. 
 Recall that, if a function $u\in L^1(\Omega)$ is regular enough, say, $u \in W^{1,1}(\Omega)$ then we have $u\in BV(\Omega)$. From this we find that $BV(\Omega)\cap C^\infty(\Omega)= W^{1,1}(\Omega)\cap C^\infty(\Omega)$. 
 %\end{remark}
  
\smallskip 

 \noindent Next, we establish some useful estimates. Note that for $h\in \R^d$ we have 
\begin{align*}
\int_{\R^d} |u(x+h)-u(x)|^p\d x \leq 2^p\|u\|^p_{L^p(\R^d)}.
\end{align*}
\noindent Furthermore, using the density of $C_c^\infty(\R^d)$ in $W^{1,p}(\R^d)$ we find that 
\begin{align*}
\int_{\R^d} |u(x+h)-u(x)|^p\d x = \int_{\R^d} \Big|\int_0^1\nabla u(x+th)\cdot h\Big|^p\d x\leq |h|^p \|\nabla u\|^p_{L^{p}(\R^d)}. 
\end{align*}
Therefore, for every $u\in W^{1,p}(\R^d)$ and $h\in \R^d$ we have 
\begin{align}\label{eq:levy-p-estimate}
\int_{\R^d} |u(x+h)-u(x)|^p\d x\leq 2^p(1\land |h|^p)\|u\|^p_{W^{1,p}(\R^d)}.
\end{align}
\noindent By Theorem \ref{thm:meyer-serrin-bv} the $BV$-norm of an element in $BV(\R^d)$ can be approximated by the $W^{1,1}$-norms of elements in $W^{1,1}(\R^d)$. Whence for $p=1$, \eqref{eq:levy-p-estimate} implies that,  for $u\in BV(\R^d)$ and $h\in \R^d$  
\begin{align}\label{eq:levy-estimate-Bv}
\int_{\R^d} |u(x+h)-u(x)|\d x\leq 2(1\land |h|)\|u\|_{BV(\R^d)}. 
\end{align}

\smallskip

\begin{lemma}\label{lem:boundedness-limsup}
Assume $\nu:\R^d\setminus\{0\}\to [0,\infty) $ is $p$-L\'{e}vy integrable and $\Omega\subset \R^d$ is a $W^{1,p}$-extension domain (resp. $BV$-extension domain). 
There is  $ C= C(\Omega,d,p)>0$ independent of $\nu$ such that 
	\begin{align*}
	&\iint\limits_{\Omega\Omega}|u(x)-u(y)|^p\nu(x-y) \mathrm{d}y\mathrm{d}x \leq C \|u\|^p_{W^{1,p}(\Omega) } \|\nu\|_{L^{1}(\R^d, 1\land |h|^p) },\quad \text{for all $u\in W^{1, p}(\Omega)$}\\
		(\text{resp.}\quad
	&\iint\limits_{\Omega\Omega}|u(x)-u(y)|\nu(x-y) \mathrm{d}y\mathrm{d}x \leq C \|u\|_{BV(\Omega) } \|\nu\|_{L^{1}(\R^d, 1\land |h|) }, \quad\, \text{for all}\quad u \in BV(\Omega)).
	\end{align*}
\end{lemma}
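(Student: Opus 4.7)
The plan is to reduce the bilinear form over $\Omega\times\Omega$ to a translation estimate on $\R^d$ by extending $u$, and then to exploit the pointwise translation bounds \eqref{eq:levy-p-estimate} and \eqref{eq:levy-estimate-Bv} already recorded in the excerpt. First I would fix $u\in W^{1,p}(\Omega)$ and let $Eu\in W^{1,p}(\R^d)$ denote its extension, so that $\|Eu\|_{W^{1,p}(\R^d)}\leq C(\Omega,d)\,\|u\|_{W^{1,p}(\Omega)}$. Since $Eu=u$ a.e.\ on $\Omega$, monotonicity of the integrand in the integration domain gives
\[
\iint\limits_{\Omega\Omega} |u(x)-u(y)|^{p}\nu(x-y)\d y\d x \;\leq\; \iint\limits_{\R^d\R^d} |Eu(x)-Eu(y)|^{p}\nu(x-y)\d y\d x.
\]

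Next I would apply Tonelli and the change of variables $h=x-y$ to rewrite the right-hand side as
\[
\int_{\R^d}\nu(h)\Big(\int_{\R^d} |Eu(x)-Eu(x-h)|^{p}\d x\Big)\d h.
\]
Now the inner integral is controlled by \eqref{eq:levy-p-estimate}, which yields $\int_{\R^d}|Eu(x+h)-Eu(x)|^p\d x\leq 2^{p}(1\land |h|^{p})\|Eu\|^{p}_{W^{1,p}(\R^d)}$. Pulling the (constant in $h$) factor $\|Eu\|^{p}_{W^{1,p}(\R^d)}$ out, the remaining integral is precisely $\|\nu\|_{L^{1}(\R^d,\,1\land |h|^{p})}$. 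Combining with the extension estimate gives the desired inequality with constant $C=2^{p}\,C(\Omega,d)^{p}$, which is indeed independent of $\nu$.

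The $BV$ case is entirely analogous: extend $u$ via the $BV$-extension operator to $Eu\in BV(\R^d)$, reduce to $\R^d\times\R^d$, change variables, and apply the $BV$-translation bound \eqref{eq:levy-estimate-Bv} in place of \eqref{eq:levy-p-estimate}. There is no serious obstacle; the only conceptual point is that the argument fundamentally requires the extension property — without it one cannot appeal to the translation estimates on $\R^d$, which is exactly the reason the hypothesis is imposed and the reason the converse-type statement can fail on non-extension domains (as the counterexample referenced in the excerpt shows).
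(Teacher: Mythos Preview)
Your proof is correct and follows essentially the same route as the paper: extend $u$ to $\R^d$, enlarge the domain of integration to $\R^d\times\R^d$, change variables $h=x-y$, apply the translation estimates \eqref{eq:levy-p-estimate} (resp.\ \eqref{eq:levy-estimate-Bv}), and absorb the extension constant. Your explicit identification of the constant $C=2^{p}C(\Omega,d)^{p}$ is a minor refinement over the paper's presentation.
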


\smallskip

\begin{proof}
	Let $\overline{u}$ be a $ W^{1,p}$-extension of $u$ on $\R^d.$ The estimate \eqref{eq:levy-p-estimate} implies 
	\begin{alignat*}{2}
	&\iint\limits_{\Omega\Omega}|u(x)-u(y)|^p\nu(x-y) \mathrm{d}y\mathrm{d}x 
	&&\leq \iint\limits_{\R^d\R^d}|\overline{u}(x+h)-\overline{u}(x)|^p \nu(h) \mathrm{d}h\mathrm{d}x\\
	&= \int_{\R^d} \nu(h) \mathrm{d}h \int_{ \R^d} |\overline{u}(x+h)-\overline{u}(x)|^p\mathrm{d}x
	&&%\leq \|\overline{u}\|^p_{W^{1,p}(\R^d)} \int\limits_{\R^d}2^p(1\land |h|^p) \nu(h) \mathrm{d}h \\
	\leq C\|u\|^p_{W^{1,p}(\Omega) } \|\nu\|_{L^{1}( \R^d, 1\land |h|^p) }. 
	\end{alignat*}
	\noindent Likewise, if $p=1$ and $u\in BV(\Omega)$ one gets the other estimate from the estimate \eqref{eq:levy-estimate-Bv}. 
\end{proof}
\smallskip

\noindent An immediate consequence of Lemma \ref{lem:boundedness-limsup} is the following embedding result. 

\begin{theorem}\label{thm:w1p-in nonlocal} 
%Assume $\nu:\R^d\setminus\{0\}\to [0,\infty) $ is $p$-L\'{e}vy integrable.
Assume $\nu\in L^1(\R^d, 1\land|h|^p)$ with $p\geq1$ and $\Omega\subset \R^d$ is a $W^{1,p}$-extension domain. There holds that the embedding $W^{1,p}(\Omega)\hookrightarrow \WnuOm$ is continuous. 
Furthermore, for $p=1$ and if $\Omega$ is a $BV$-extension domain then the embedding $BV(\Omega)\hookrightarrow W^1_\nu(\Omega)$  is also continuous. 
%If is a $W^{1,p}$-extension domain then the embedding $W^{1,p}(\Omega)\hookrightarrow \WnuOm$ is continuous and \, (resp. $BV(\Omega)\hookrightarrow W^1_\nu(\Omega), $p=1$)$ is continuous. 
\end{theorem}

\smallskip 

\noindent It is worth emphasizing that the above embeddings may fail if $\Omega$ is not an extension domain (see the counterexample \ref{Ex:counterexample-extension}). Another straightforward consequence of Lemma \ref{lem:boundedness-limsup} is the following. 

\begin{theorem}\label{thm:limpsup}
Let $\Omega$ be a $W^{1,p}$-extension domain, $p\geq1$. There is  $C=(\Omega, d,p)>0$ such that 
	\begin{align*}
	\limsup_{\varepsilon\to 0} \iil_{\Omega \Omega } |u(x)-u(y)|^p\nu_\varepsilon(x-y)\d x\d y \leq C \|u\|^p_{W^{1,p}(\Omega)}\qquad\text{for all $u\in W^{1,p}(\Omega)$}.
	\end{align*}
	If $p=1$ and $\Omega$ is a $BV$-extension domain we also have ,
	\begin{align*}
	\limsup_{\varepsilon\to 0} \iil_{\Omega \Omega } |u(x)-u(y)|\nu_\varepsilon(x-y)\d x\d y \leq C\|u\|_{BV(\Omega)}\qquad\text{for all $u\in BV(\Omega)$}.
	\end{align*}
\end{theorem}

\smallskip

\noindent The next proposition shows that the $p$-L\'{e}vy integrability condition is consistent and optimal in the sense that it draws a borderline for which a space of type $\WnuOm$ is trivial or not. 
\begin{proposition}\label{prop:borderline} Let $\nu: \R^d\to [0,\infty]$ be symmetric. The following assertions are true. 
\begin{enumerate}[$(i)$]
\item If $\nu\in L^1(\R^d)$ then $\WnuOm= L^p(\Omega)$ and $\WnuOmR\cap L^p(\R^d)= L^p(\R^d)$.
		
\item If $\nu \in L^1(\R^d, 1\land |h|^p)$ then $W^{1,p}(\R^d)\subset W_\nu^p(\R^d)$, hence the spaces $\WnuOm$ and $\WnuOmR$ contain $C_c^\infty(\R^d)$. Moreover, if $\Omega$ is bounded, 
then the spaces $\WnuOm$ and $\WnuOmR$  also contain bounded Lipschitz functions. 

\item Assume $\nu$ is radial, $\Omega$ is connected and put $C_\delta=\int_{B_\delta(0)} \hspace*{-0.5ex}|h|^p\nu(h)\d h$. If $C_\delta=\infty$  for all $\delta>0$, %in particular $\nu \not\in L^1(\R^d, 1\land |h|^p)$. 
then any  function $u\in W^{1,p}(\Omega)\cap \WnuOm$ or $u\in C^1(\Omega)\cap \WnuOm$ is a  constant function.
\item Assume $\nu \in L^1(\R^d, 1\land |h|^p)$ and $\nu$ is radial. Given $u\in W^{1,p} (\R^d)$ there is $\delta=\delta(u)>0$ so that 
%depending on $u$ such that
\begin{align}\label{eq:pseudo-equiv}
2^{-p}K_{d,p} C_\delta\|\nabla u\|^p_{L^{p} (\R^d)} \leq |u|^p_{W^p_\nu (\R^d)}\leq  2^p\|\nu\|_{L^1(\R^d, 1\land |h|^p\d h)}) \|u\|^p_{W^{1,p} (\R^d)}.
\end{align}
	\end{enumerate}
\end{proposition}
%\noindent \textbf{Warning!} The equivalence \eqref{eq:equiv-sobolev} does not imply that $W^{1,p}(\R^d) = W^p_\nu(\R^d)$. 

\smallskip 

\begin{proof} 
\noindent $(i)$ is obvious.
$(ii)$ For a bounded Lipschitz function $u$, we have $|u(x) -u(y)|^p\leq C(1\land|x-y|^p)$ for some $C>0$. Hence if $\Omega$ is bounded, by integrating both sides it follows that $u\in \WnuOm$ and $u\in \WnuOmR$. The inclusion $W^{1,p}(\R^d)\subset W_\nu^p(\R^d)$ follows from Lemma \ref{lem:boundedness-limsup} or from the estimate \eqref{eq:levy-p-estimate}. $(iii)$ Let $u\in W^{1,p}(\Omega)\cap \WnuOm$ or $u\in C^1(\Omega)\cap \WnuOm$ and let $K \subset\Omega$ be a compact set. Since $|\nabla u|\in L^p(K)$, for arbitrary $\eta>0$ there is $0<\delta= \delta(\eta,K)<\dist(K, \partial \Omega)$ such that, 
\begin{align*}
\|\nabla u(\cdot+h)-\nabla u\|_{L^p(K)}<\eta\quad \text{for all $|h|\leq \delta $.} 
\end{align*} 
%So that using the inequality $|a+b|^p\leq 2^{p-1}(|a|^p+|b|^p)$ for all $a,b\in\R$, we find that
Minkowski's inequality implies 

\begin{align*}
\Big(\int_K\int_{B_\delta(0)}	|\nabla u(x)\cdot h|^p\nu(h)\d h\d x\Big)^{1/p} &\leq \Big(\int_K\int_{B_\delta(0)}	\hspace{-0.5ex} \Big|\int_0^1 \nabla u(x+th) \cdot h\d t\Big|^p\nu(h) \d h\d x\Big)^{1/p}+\eta C_\delta^{1/p}.
%\\&+ \varepsilon\Big( \int_{B_\delta(0)} \hspace{-2ex}|h|^p\nu(h)\d h\Big)^{1/p}.
\end{align*} 
%	\begin{align*}
%	2^{1-p}	\int_K\int_{B_\delta(0)}	|\nabla u(x)\cdot h|^p\nu(h)\d h\d x &\leq \int_K\int_{B_\delta(0)}	\hspace{-1ex} \Big|\int_0^1 \nabla u(x+th) \cdot h\d t\Big|^p\nu(h) \d h\d x+\eta \int_{B_\delta(0)} \hspace{-2ex}|h|^p\nu(h)\d h.
%	\end{align*}
The choice $0<\delta<\dist(K, \partial \Omega)$ ensures that $B_\delta(x)\subset \Omega$ for all $x\in K$. From the foregoing, using the fundamental theorem of calculus, polar coordinates and the formula \eqref{eq:rotation-invariant-constant} yield 
	\begin{align*}%\label{eq:optimal-non-levy}
	\begin{split}
		|u|_{\WnuOm}
		%&\geq 
	%\iil_{\Omega\Omega}|u(x)-u(y)|^p\nu(x-y)\,\d y\,\d x
	&\geq\Big( \int_{K} \int_{B_\delta(0)} \Big| \int_0^1\nabla u(x+th)\cdot h\d t\Big|^p \nu(h) \d h\,\d x\Big)^{1/p}\\
	%&\geq 2^{1-p}\int_{K} \int_{B_\delta(0)} |\nabla u(x)\cdot h|^p \nu(h) \d h\,\d x- \eta \int_{B_\delta(0)} |h|^p\nu(h)\d h\\
	&\geq \Big( \int_{K} \int_{\mathbb{S}^{d-1}} \hspace{-2ex} |\nabla u(x)\cdot w|^p \mathrm{d}\sigma_{d-1}(w) \int_{0}^{\delta}\hspace{-1ex} r^{p+d-1} \nu(r)\d r
	\Big)^{1/p}- \eta \Big( \int_{B_\delta(0)} \hspace{-2ex}|h|^p\nu(h)\d h\Big)^{1/p}\\
	&= \Big(K^{1/p}_{d,p}\|\nabla u\|_{L^p(K)}-\eta\Big)\Big( \int_{B_\delta(0)} \hspace{-2ex}|h|^p\nu(h)\d h\Big)^{1/p}. 
	\end{split}
	\end{align*}
	Therefore, for each $\eta>0$ and each compact set $K\subset \Omega$ we have 
	\begin{align}\label{eq:optimal-non-levy}
	\begin{split}
	|u|_{\WnuOm}&\geq C^{1/p}_\delta\big(K^{1/p}_{d,p}\|\nabla u\|_{L^p(K)}-\eta\big).
	\end{split}
	\end{align}
	
	\noindent Since $\|u\|_{\WnuOm}<\infty$ and $C_\delta= \infty$, this is possible only if $\|\nabla u\|^p_{L^p(K)}=0$. As the compact set $K\subset \Omega$ is arbitrary, we find that $\nabla u=0$ a.e. on $\Omega$. 
	Thus $u$ is a constant since $\Omega$ is connected. $(iv)$ The upper inequality clearly follows from \eqref{eq:levy-p-estimate}. Proceeding as for the estimate \eqref{eq:optimal-non-levy} by taking  $\Omega=\R^d$  and $ K=\R^d$ also yields that, for all $\eta>0$ there is $\delta= \delta(\eta)>0$ such that 
 	\begin{align}\label{eq:optimal-non-levy-bis}
	\begin{split}
|u|_{W^p_\nu(\R^d)}\geq C^{1/p}_\delta\big(K^{1/p}_{d,p}\|\nabla u\|_{L^p(\R^d)}-\eta\big).
	\end{split}
	\end{align}
If $\|\nabla u\|_{L^p(\R^d)}\neq 0$, taking $\eta=\frac{1}{2}K^{1/p}_{d,p}\|\nabla u\|_{L^p(\R^d)}$ yields $|u|^p_{W^p_\nu(\R^d)} \geq 2^{-p}K_{d,p}C_\delta\|\nabla u\|^p_{L^p(\R^d)}.$
%	\begin{align*}
%	|u|^p_{W^p_\nu(\R^d)} &\geq 2^{-p}K_{d,p}C_\delta\|\nabla u\|^p_{L^p(\R^d)}. 
%	\end{align*}
This estimate remains true  for any $\delta>0$, if  $\|\nabla u\|_{L^p(\R^d)}=0$. 
\end{proof}

\noindent The next theorem provides a characterization of the $p$-L\'{e}vy integrability condition. 
\begin{theorem}
	Assume $\nu:\R^d\to [0,\infty]$ is radial. The following assertions are equivalent. 
	\begin{enumerate}[$(i)$]
		\item The $p$-L\'{e}vy integrability condition in \eqref{eq:plevy-cond} holds.
		\item The embedding $W^{1,p}(\R^d)\hookrightarrow W_\nu^p(\R^d)$ is continuous. 
		\item $\mathcal{E}^1_{\R^d}(u)<\infty$ for all $u\in W^{1,p}(\R^d)$. 
		\item $\mathcal{E}^1_{\R^d}(u)<\infty$ for all $u\in C_c^\infty(\R^d)$. 
		\item   There exists $u\in C_c^\infty(B_1(0))\setminus\{0\}$ such that $\mathcal{E}^1_{\R^d}(u_n)<\infty$ for all $n\geq 1$, $u_n(x)=n^d u(nx)$. 
		% for some $u\in C_c^\infty(B_1(0))$ with $u\not\equiv 0$. 
	\end{enumerate}
This remains true when $p=1$ with $BV(\R^d)$ in place of $W^{1,1}(\R^d)$.
\end{theorem}

\begin{proof}
	$(i)\implies (ii)$. The right hand side of the estimate \eqref{eq:pseudo-equiv}  implies the continuity of the embedding  $ W^{1,p}(\R^d)\hookrightarrow W^p_\nu(\R^d)$. The implications $(ii)\implies (iii)$, $(iii)\implies (iv)$ and $(iv)\implies (v)$ are straightforward. 
	Let us prove that $(v)\implies (i)$. 
Given that $u\in C_c^\infty(B_1(0)\setminus\{0\}$ we have  $\|\nabla u\|_{L^{p} (\R^d)}\neq 0$. By Proposition \ref{prop:borderline} $(iv)$ there exists $\delta=\delta(u)>0$ (see the estimate \eqref{eq:pseudo-equiv}) such that $ \mathcal{E}^1_{\R^d} (u)\geq 2^{-p} K_{d,p} C_\delta\|\nabla u\|^p_{L^{p} (\R^d)}$ and hence 
	$C_\delta=\int_{B_\delta(0)}|h|^p\nu(h)\d h<\infty$. Next, we fix $n\geq 1$ such that $\delta>\frac{2}{n}$ so that  $\supp u_n\subset B_{\delta/2}(0)$. Since  $B_{\delta/2}(x)\subset B_{\delta}(0)$ for all $x\in B_{\delta/2}(0)$ we have 
\begin{align*}
\infty>\mathcal{E}^1_{\R^d} (u_n) \geq 2\int_{B_{\delta/2}(0)}|u_n(x)|^p\int_{\R^d\setminus B_{\delta/2}(0)}\nu(x-y)\d y\d x\geq 2\|u_n\|^p_{L^p(\R^d)} \int_{\R^d\setminus B_{\delta}(0)}\nu(h)\d h.
\end{align*}
Thus $\int_{|h|\geq \delta}\nu(h)\d h<\infty$. Accordingly $\nu\in L^1(\R^d, 1\land |h|^p).$
The case $p=1$ follows analogously. 
\end{proof}

%%%%%%%%%%%%%%%%%%%%%%%%%%%%%%%%%%%%%%%%%%%%%%%%%%%%%%%%%%%%%%%%%%%%%%
\section{Main results}\label{sec:charact-W1p}
%%%%%%%%%%%%%%%%%%%%%%%%%%%%%%%%%%%%%%%%%%%%%%%%%%%%%%%%%%%%%%%%%%%%%

%% the reflexivity property of $L^p$-space for $1<p<\infty$, namely the Riesz representation Theorem The %%result infers that if $1\leq p<\infty$ then $L^{p'}(\Omega)$ is isomorphic to the dual of $L^p(\Omega)$ and any linear continuous functional on $L^p(\Omega)$ is of the form $u\mapsto\int_\Omega g u$ with $g\in L^{p'}\Omega)$. 

\noindent First and foremost, the proof of Theorem \ref{thm:BBM-limit-result} in the case $\Omega=\R^d$ is much simpler. Indeed, by the estimates \eqref{eq:optimal-non-levy-bis}  and \eqref{eq:split-estimate} below, for sufficiently small $\eta>0$,  there is $\delta= \delta(\eta)>0$ such that 
	\begin{align*}
&\iil_{\R^d \R^d} \hspace{-1ex}|u(x)-u(y)|^p\nu_\eps(x-y)\d y\d x
\geq \big(K^{1/p}_{d,p}\|\nabla u\|_{L^p(\R^d)}-\eta\big)^p
\hspace*{-1ex}\int_{B_\delta(0)} \hspace{-2ex}|h|^p\nu_\eps(h)\d h, \\
%\xrightarrow{\eps, \eta \to 0} K_{d,p} \|\nabla u\|^p_{L^p(\R^d)}, 
&\iil_{\R^d\R^d} \hspace{-1ex}|u(x)-u(y)|^p\nu_\varepsilon(x-y) \mathrm{d}y \d x\leq K_{d,p}\|\nabla u\|^p_{L^p(\R^d)}+ 2^p\|u\|^p_{L^p(\R^d)}\int_{|h|>\delta}\hspace{-2ex}\nu_\varepsilon(h)\,\d h.
%\xrightarrow{\eps\to0} K_{d,p} \|\nabla u\|^p_{L^p(\R^d)}. 
\end{align*}
\noindent Letting $\eps\to 0$ and $\eta\to0$ successively, using the formulas \eqref{eq:concentration-bis} and \eqref{eq:concentration-prop}, we get 
 \begin{align}\label{eq:lim-full-form}
 \lim_{\eps\to 0}\iil_{\R^d\R^d} \hspace{-1ex}|u(x)-u(y)|^p\nu_\varepsilon(x-y) \d y \d x
 = K_{d,p}\|\nabla u\|^p_{L^p(\R^d)}. 
 %\liminf_{\eps\to 0}\iil_{\R^d\R^d} \hspace{-1ex}|u(x)-u(y)|^p\nu_\varepsilon(x-y) \d y \d x. 
 \end{align}
\noindent The case $p=1$ and $u\in BV(\R^d)$ can be proved analogously. In fact, it can be shown that  \eqref{eq:lim-full-form} holds if and only if up to a multiple factor $(\nu_\eps)_\eps$ satisfies \eqref{eq:plevy-approx}. In other words, the class $(\nu_\eps)_\eps$ is the largest (the sharpest) class for which the BBM formula \eqref{eq:lim-full-form} holds. From now on, we assume $\Omega\neq \R^d$. We start with the following lemma which is somewhat a revisited version of \cite[Lemma 1]{BBM01}.

\begin{lemma}\label{lem:boun-integration-bypart} Assume $\nu \in L^1(\R^d, 1\land |h|^p )$ is symmetric, $p\geq1$.  Given $ u\in L^p(\R^d)$, $\varphi \in C_c^\infty(\R^d)$ and a unit vector $e\in \mathbb{S}^{d-1}$ we have 
	\begin{relsize}{-0}
		\begin{align*}
		\Big | \hspace{-3ex}\iint\limits_{(y-x)\cdot e\geq 0} \hspace{-3ex} u(x) \frac{\varphi(y) - \varphi(x)}{|x-y|}(1\land |x-y|^p) \nu(x-y) \mathrm{d}y &\mathrm{d}x\Big |+ \hspace{-0.2ex}\Big | \hspace{-2ex}\iint\limits_{(y-x)\cdot e\leq 0} \hspace{-3ex} u(x) \frac{\varphi(y) - \varphi(x)}{|x-y|}(1\land |x-y|^p) \nu(x-y) \mathrm{d}y \mathrm{d}x\Big |\\
		&\leq \hspace{-1ex}\iint\limits_{\R^d\R^d} \frac{|u(x)-u(y)|}{|x-y|}|\varphi(x)|(1\land |x-y|^p) \nu(x-y) \mathrm{d}y \mathrm{d}x. 
		\end{align*}
	\end{relsize}
	
\end{lemma}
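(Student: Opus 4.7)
The plan is to prove both halves of the estimate simultaneously by exploiting the evenness of $\nu$ in a discrete integration-by-parts style argument. Denote by $I_\pm$ the two integrals on the left-hand side, and abbreviate
$$
\omega(x,y):=\frac{1\land|x-y|^p}{|x-y|}\nu(x-y),
$$
which is symmetric in $(x,y)$ since $\nu$ is even. The goal is to convert the linear expression $u(x)[\varphi(y)-\varphi(x)]$ appearing in $I_\pm$ into the bilinear expression $[u(y)-u(x)]\varphi(x)$ appearing on the right-hand side.

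For $I_+$, I would first perform the change of variables $z=y-x$ (with $x$ fixed) and apply Fubini to rewrite
$$
I_+=\iint_{z\cdot e\,\geq\,0}\!u(x)\bigl[\varphi(x+z)-\varphi(x)\bigr]\,\frac{1\land|z|^p}{|z|}\,\nu(z)\,\d x\,\d z,
$$
where $\nu(-z)=\nu(z)$ has been used. Inside the $x$-integral, a translation by $-z$ in the first summand yields $\int u(x)\varphi(x+z)\,\d x=\int u(x-z)\varphi(x)\,\d x$, so the bracket becomes $[u(x-z)-u(x)]\varphi(x)$. Taking absolute values and reversing the substitution (setting $y=x-z$, under which the condition $z\cdot e\geq 0$ turns into $(y-x)\cdot e\leq 0$) produces
$$
|I_+|\leq \iint_{(y-x)\cdot e\,\leq\,0}\!|u(y)-u(x)|\,|\varphi(x)|\,\omega(x,y)\,\d y\,\d x.
$$
The identical manipulation performed on $I_-$ gives the complementary bound
$$
|I_-|\leq \iint_{(y-x)\cdot e\,\geq\,0}\!|u(y)-u(x)|\,|\varphi(x)|\,\omega(x,y)\,\d y\,\d x,
$$
and adding these two inequalities (their domains of integration cover $\mathbb{R}^d\times\mathbb{R}^d$ up to the measure-zero hyperplane $(y-x)\cdot e=0$) yields exactly the claimed estimate.

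The main technical point I expect to be the only real obstacle is the justification of Fubini and of the translation identity, which requires absolute integrability of the integrand. Here the hypotheses combine favorably: near the diagonal one uses $|\varphi(y)-\varphi(x)|\leq\|\nabla\varphi\|_\infty|x-y|$ to dominate the integrand by $|u(x)|(1\land|x-y|^p)\nu(x-y)$, which is $p$-L\'evy integrable in $y$; away from the diagonal the ratio $(\varphi(y)-\varphi(x))/|x-y|$ is bounded by $2\|\varphi\|_\infty$ and the tail $\int_{|h|\geq 1}\nu(h)\,\d h$ is finite; finally, since $\varphi\in C_c^\infty(\mathbb{R}^d)$ and $u\in L^p(\mathbb{R}^d)$, H\"older's inequality handles the localized $|u|$-integrals on the compact support of $\varphi$. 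Beyond this bookkeeping, the argument is purely algebraic and relies only on the translation invariance of Lebesgue measure together with the evenness of $\nu$.
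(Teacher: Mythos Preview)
Your argument is correct and follows the same integration-by-parts strategy as the paper: exploit the evenness of $\nu$ to transfer the difference from $\varphi$ to $u$, bound each half-space integral by the complementary half of the right-hand side, and add. The only notable technical difference is in how integrability is secured. The paper truncates the kernel away from the diagonal, setting $\widetilde{\nu_\delta}(h)=|h|^{-1}(1\land|h|^p)\nu(h)\mathds{1}_{|h|\geq\delta}$, so that the two pieces $u(x)\varphi(y)\widetilde{\nu_\delta}(x-y)$ and $u(x)\varphi(x)\widetilde{\nu_\delta}(x-y)$ are \emph{separately} integrable on $\mathbb{R}^d\times\mathbb{R}^d$; it then performs the swap at the level of the double integral and lets $\delta\to 0$ at the end. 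You instead establish absolute integrability of the \emph{combined} integrand directly (using the Lipschitz bound on $\varphi$ near the diagonal), apply Fubini, and split only inside the inner $x$-integral for fixed $z$, where H\"older with $u\in L^p$, $\varphi\in L^{p'}$ gives finiteness without any truncation. This is a mild but genuine streamlining of the paper's proof. One small point to tighten in your write-up: in the far region $|x-y|>1$ with $x\notin\operatorname{supp}\varphi$ but $y\in\operatorname{supp}\varphi$, the bound $2\|\varphi\|_\infty$ on the ratio is not by itself enough (the $x$-integration is over an unbounded set); the clean way is to change variables $h=x-y$ first and then apply H\"older in $y$, giving $\|u\|_{L^p}\|\varphi\|_{L^{p'}}\int_{|h|>1}|h|^{-1}\nu(h)\,\d h<\infty$.
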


\begin{proof}
	Let us introduce the truncated measure $\widetilde{\nu}_\delta (h)= |h|^{-1}(1\land |h|^p) \nu(h)~\mathds{1}_{\R^d\setminus B_\delta }(h)$ for $\delta>0$ which enables us to rule out an eventual singularity of $\nu$ at the origin. Moreover, note that $\widetilde{\nu}_\delta \in L^1(\R^d)$. It turns out that the mappings $(x,y)\mapsto u(x) \varphi(y) \widetilde{\nu}_\delta (x-y)$ and $(x,y)\mapsto u(x) \varphi(x) \widetilde{\nu}_\delta (x-y)$ are integrable. Indeed, using H\"older inequality combined with Fubini's theorem yield
	%
	%	\begin{relsize}
	\begin{align*}
	&\iil_{\R^d\R^d} |u(x) \varphi(x)| \widetilde{\nu}_\delta (x-y)\d y \,\d x 
	= \iil_{{ |x-y|\geq \delta}} |u(x) \varphi(x)| |x-y|^{-1}(1\land |x-y|^p) \nu(x-y) \,\d x \,\d y \\
	&\leq \delta^{-1} \Big(\hspace{-2ex}\iil_{{ |x-y|\geq \delta}} \hspace{-2ex}| u(x)|^p (1\land |x-y|^{p})\nu(x-y) \,\d y \,\d x \Big)^{1/p} 
	\Big(\hspace{-2ex}\iil_{{ |x-y|\geq \delta}} \hspace{-2ex} |\varphi(x)|^{p'}(1\land |x-y|^p) \nu(x-y) \,\d y \,\d x\Big)^{1/p'} \\
	&\leq \delta^{-1}\|\varphi\|_{L^{p'}(\R^d)}\|u\|_{L^p(\R^d)}\int_{\R^d}(1\land |h|^p)\nu(h)\,\d h<\infty.
	\end{align*}
	%\end{relsize}
	\noindent Analogously, we also get
	\begin{align*}
	\iil_{\R^d \R^d} |u(x) \varphi(y)| \widetilde{\nu}_\delta (x-y)\,\d y \,\d x 
	&\leq \delta^{-1}\|\varphi\|_{L^{p'}(\R^d)}\|u\|_{L^p(\R^d)}\int_{\R^d}(1\land |h|^p)\nu(h)\,\d h<\infty.
	\end{align*}
	Consequently by interchanging $x$ and $y$, using Fubini's theorem and the symmetry of $\nu$ we obtain 
	\begin{alignat*}{2}
	& \iil_{(y-x)\cdot e\geq 0} u(x) \varphi(x) \widetilde{\nu}_\delta (x-y)\,\d y \,\d x 
	& &= \iil_{(x-y)\cdot e\geq 0} u(y) \varphi(y)\widetilde{\nu}_\delta (x-y)\,\d x \,\d y  \\
%	&= \int_{\R^d} u(y)\varphi(y)\,\d y \il_{h\cdot e\geq 0} \widetilde{\nu}_\delta (h) \,\d h
%	& &= \int_{\R^d} u(y)\varphi(y)\,\d y \il_{h\cdot e\leq 0} \widetilde{\nu}_\delta (h) \,\d h\\
%	&= \int_{\R^d} u(y)\varphi(y)\,\d y \il_{h\cdot e\geq 0} \widetilde{\nu}_\delta (h) \,\d h
	 &= \iil_{(y-x')\cdot e\geq 0} u(y)\varphi(y)\widetilde{\nu}_\delta (y-x') \,\d y\,\d x' && \qquad(x'= 2y-x, \d x= \d x'). 
	\end{alignat*}
	Therefore we have 
	\begin{relsize}{-0}
		\begin{align*}
		\hspace{-2ex} \Big|\int_{\R^d} u(x)\,\d x \hspace{-3ex}\il_{(y-x)\cdot e\geq 0} \hspace*{-2ex}(\varphi(y)&-\varphi(x)) \widetilde{\nu}_\delta (x-y)\,\d y\Big| \\
		&=\Big |\hspace{-2ex}\iil_{(y-x)\cdot e\geq 0} \hspace*{-2ex} u(x) \varphi(y) \widetilde{\nu}_\delta (x-y)\,\d y \,\d x -\hspace{-2ex} \iil_{(y-x)\cdot e\geq 0} \hspace*{-2ex} u(x) \varphi(x) \widetilde{\nu}_\delta (x-y)\,\d y \,\d x \Big| \\
		&=\Big | \int_{\R^d} \varphi(y) \,\d y \hspace{-2ex}\il_{(y-x)\cdot e\geq 0} \hspace*{-2ex} (u(x)-u(y)) \widetilde{\nu}_\delta (x-y) \,\d x\Big|
		\\& \leq \int_{\R^d} |\varphi(y)|\,\d y \il_{(y-x)\cdot e\geq 0} \hspace*{-3ex} \frac{|u(x)-u(y)|}{|x-y|} (1\land |x-y|^p)\nu(x-y) \,\d x\\
		&= \int_{\R^d} |\varphi(x)|\,\d x \il_{(y-x)\cdot e\leq 0} \hspace*{-3ex} \frac{|u(y)-u(x)|}{|x-y|} (1\land |x-y|^p)\nu(x-y) \,\d y.
		\end{align*}
	\end{relsize}
	Thus letting $\delta \to 0$ implies 
	%\begin{relsize}{-0.5}
		\begin{align}\label{eq:delta-estimate1}
		\begin{split}
	\Big| \hspace{-2ex}\iil_{(y-x)\cdot e\geq 0} \hspace{-1ex} u(x)(\varphi(y)-\varphi(x)) &|x-y|^{-1}(1\land |x-y|^p)\nu(x-y) \,\d y \,\d x\Big| 
	\\&\leq \hspace{-3ex}\iil_{(y-x)\cdot e\leq 0} \hspace*{-3ex}|\varphi(x)| \frac{|u(y)-u(x)|}{|x-y|}(1\land |x-y|^p)\nu(x-y) \,\d y\,\d x.
		\end{split}
		\end{align}
%	\end{relsize}
	%
	Likewise one has
	%
	%\begin{relsize}{-0.5}
		\begin{align}\label{eq:delta-estimate2}
		\begin{split}
	\Big| \hspace{-2ex}\iil_{(y-x)\cdot e\leq 0} \hspace{-2ex} u(x) (\varphi(y)-\varphi(x)) &|x-y|^{-1}( 1\land |x-y|^p)\nu(x-y)\,\d y \,\d x\Big| \\
	&\leq \hspace{-2ex} \iil_{(y-x)\cdot e\geq 0} \hspace*{-3ex}|\varphi(x)| \frac{|u(y)-u(x)|}{|x-y|} (1\land |x-y|^p)\nu(x-y) \,\d y\,\d x.
		\end{split}
		\end{align}
%	\end{relsize}
	%
Summing the estimates \eqref{eq:delta-estimate1} and \eqref{eq:delta-estimate2} gives the desired inequality.
\end{proof}

\smallskip

\begin{theorem}\label{thm:bounded-liminf-estimate} Let $\Omega\subset \R^d$ be an open, $u \in L^p(\Omega)$, $p\geq 1$ and $A_p$ be defined as in\eqref{eq:liminf-condition}.
	 %Then for every  with $1\leq p<\infty $, 
Then given a unit vector $e\in \mathbb{S}^{d-1}$ and $\varphi\in C_c^\infty(\R^d)$ with support in $\Omega$ the following estimate holds true 
	\begin{align}\label{eq:cont-weak-derivatve}
	\left|\int_{\Omega} u(x) \nabla \varphi (x)\cdot e ~\mathrm{d}x \right| \leq \frac{A_p^{1/p}}{ K_{d,1}}\|\varphi\|_{L^{p'}(\Omega)}.
	\end{align} 
\end{theorem}

\smallskip 

\begin{proof}
Throughout, to alleviate the notation we denote $\pi_\varepsilon(x-y)= (1\land |x-y|^p)\nu_\varepsilon(x-y)$. Let $\overline{u}\in L^p(\R^d)$ be the zero extension of $u$ off $\Omega$. Since $\supp\varphi\subset \Omega,$ we have the identity
\begin{align*}
\int_{\R^d} |\varphi(x)|\,\d x \int_{\R^d} \frac{|\overline{u}(y)-\overline{u}(x)|}{|x-y|} \pi_\varepsilon(x-y) \,\d y
&= \iil_{\Omega \Omega}\frac{|u(y)-u(x)|}{|x-y|} |\varphi(x)| \pi_\varepsilon(x-y) \,\d y \,\d x \\
&+ \int_{\supp(\varphi)} |\varphi(x)|\,\d x \int_{\R^d\setminus \Omega } \frac{|u(x)|}{|x-y|} \pi_\varepsilon(x-y) \,\d y. 
\end{align*}
\noindent First, for $\delta= \operatorname{dist}(\supp(\varphi), \partial\Omega)>0$, the H\"older inequality implies 
\begin{align*}
\hspace{-2ex}\int_{\supp(\varphi)} \hspace{-1ex}|\varphi(x)|\,\d x\hspace{-1ex} 
\int_{\R^d\setminus \Omega }\hspace{-0.5ex} \frac{|u(x)|}{|x-y|} \pi_\varepsilon(x-y) \,\d y
\leq \delta^{-1} \|u\|_{L^{p}(\Omega)} \|\varphi\|_{L^{p'}(\Omega)} \hspace{-1ex}
\int_{|h|\geq \delta } \hspace{-1ex}(1\land |h|^p)\nu_\varepsilon(h) \,\d h
\xrightarrow{\varepsilon\to 0}0. 
\end{align*}

\noindent Second, using again the H\"older inequality and $|h|^{-p}(1\land |h|^p)\leq 1$ we find that

\begin{align*}
\hspace{-1ex}\iil_{\Omega \Omega}\frac{|u(y)-u(x)|}{|x-y|} |\varphi(x)| 
&\pi_\varepsilon(x-y) \,\d y \,\d x \\
& \leq \Big( \iil_{\Omega \Omega}\frac{|u(y)-u(x)|^p}{|x-y|^p} \pi_\varepsilon(x-y) \,\d y \,\d x  \Big)^{1/p} \Big( \iil_{\Omega \Omega}|\varphi(x)|^{p'} \pi_\varepsilon(x-y) \,\d y \,\d x  \Big)^{1/p'}\\
&\leq \|\varphi\|_{L^{p'}(\Omega)}\Big( \iil_{\Omega \Omega}|u(y)-u(x)|^p\nu_\varepsilon(x-y) \,\d y \,\d x  \Big)^{1/p} . 
\end{align*}
\noindent Therefore inserting these two estimates  in the previous identity and combining the resulting estimate with that of Lemma \ref{lem:boun-integration-bypart} and that of the assumptions imply
%
%\begin{relsize}{-1}
\begin{align}\label{eq:nonlocal-liminf}
\begin{split}
&\liminf_{\varepsilon \to 0}\Big|\int_{\Omega} u(x)\,\d x \hspace{-3ex} \il_{(y-x)\cdot e\geq 0} \hspace{-3ex} \frac{(\varphi(y)-\varphi(x)) }{|x-y|} (1\land |x-y|^p)\nu_\varepsilon(x-y)\,\d y \Big|~+\\
&\liminf_{\varepsilon \to 0} \Big|\int_{\Omega} u(x)\,\d x \hspace{-3ex} \il_{(y-x)\cdot e\leq 0} \hspace{-3ex}\frac{(\varphi(y)-\varphi(x)) }{|x-y|}(1\land |x-y|^p)\nu_\varepsilon(x-y) \,\d y \Big|\leq A_p^{1/p} \|\varphi\|_{L^{p'}(\Omega)}.
\end{split}
\end{align}
	%\end{relsize}

\noindent It remains to compute the limits appearing on the left hand side of \eqref{eq:nonlocal-liminf}. For all $x, h\in \R^d$ we have 
\begin{align*}
	\varphi(x+h)-\varphi(x)= \nabla \varphi(x)\cdot h+ \int_{0}^1\big(\nabla\varphi(x+th)-\nabla\varphi(x)\big)\cdot h\d t
\end{align*}

\noindent and $|\nabla \varphi (x+h)- \nabla \varphi(x)|\leq C (1\land |h|)$. So that Remark \ref{rem:asymp-nu} implies

\begin{align*}
\lim_{\varepsilon \to 0} \il_{h\cdot e\geq 0} \int_0^1\Big|\big[\nabla \varphi(x+th)-\nabla\varphi(x)\big]\cdot \frac{h}{|h|}\Big|\d t (1\land |h|^p)\nu_\varepsilon(h) \d h\leq C\lim_{\varepsilon \to 0} \int_{\R^d} (1\land |h|^{p+1})\nu_\varepsilon(h) \,\d h=0. 
\end{align*}
Thus, using the above expression and the fact that $\int_{\R^d} (1\land |h|^p)\nu_\varepsilon(h) \,\d h=1$, we obtain
\begin{align*}
\lim_{\varepsilon \to 0} \il_{(y-x)\cdot e\geq 0} \hspace{-3ex} \frac{(\varphi(y)-\varphi(x)) }{|x-y|} &(1\land |x-y|^p)\nu_\varepsilon(x-y) \,\d y\\
&= \lim_{\varepsilon \to 0} \il_{\mathbb{S}^{d-1} \cap \{w\cdot e\geq 0\}} \hspace{-4ex}\nabla\varphi(x)\cdot w \,\d \sigma_{d-1}(w)\int_0^\infty (1\land r^p)r^{d-1}\nu_\varepsilon(r) \,\d r \\
&= |\mathbb{S}^{d-1}|^{-1}\il_{\mathbb{S}^{d-1} \cap \{w\cdot e\geq 0\}} \hspace{-3ex}\nabla\varphi(x)\cdot w \,\d \sigma_{d-1}(w). 
\end{align*}
\noindent Let $(e, v_2, \cdots v_d)$ be an orthonormal basis of $\R^d$ in which we write the coordinates $w= (w_1,w_2, \cdots, w_d)= (w_1,w')$ that is $w_1 = w\cdot e$ and $ w_i= w\cdot v_i$. Similarly, in this basis one has $\nabla \varphi (x) =(\nabla \varphi (x)\cdot e, (\nabla \varphi (x))' )$. Observe that $\nabla \varphi (x) \cdot w = \big(\nabla \varphi (x)\cdot e\big) (w\cdot e)+ [\nabla \phi (x)]'\cdot w' $. We find that

\begin{align*}
\il_{\mathbb{S}^{d-1} \cap \{w\cdot e\geq 0\}} \hspace{-3ex} \nabla \varphi (x) \cdot w \d \sigma_{d-1}(w) = \il_{\mathbb{S}^{d-1} \cap \{w\cdot e\geq 0\}}\hspace{-3ex} ( \nabla \varphi (x)\cdot e )( w\cdot e) \d \sigma_{d-1}(w)+ \il_{\mathbb{S}^{d-1} \cap \{w\cdot e\geq 0\}} \hspace{-3ex} (\nabla \varphi (x))' \cdot w' \d \sigma_{d-1}(w). 
\end{align*}
\noindent Consider the rotation $O(w) = (w_1, -w') = (w\cdot e, -w') $ then the rotation invariance of the Lebesgue measure entails that $\d \sigma_{d-1}(w) = \d \sigma(O(w)) $ and we have 
\begin{align*}
\il_{\mathbb{S}^{d-1} \cap \{w\cdot e\geq 0\}} \hspace{-3ex} (\nabla \varphi (x))' \cdot w' \d \sigma_{d-1}(w) = -\il_{\mathbb{S}^{d-1} \cap \{w\cdot e\geq 0\}} \hspace{-3ex} (\nabla \varphi (x))' \cdot w' \d \sigma_{d-1}(w) =0. 
\end{align*}
Whereas, by symmetry we have 
\begin{align*}
\il_{\mathbb{S}^{d-1} \cap \{w\cdot e\geq 0\}} \hspace{-3ex} w\cdot e \, \,\d \sigma_{d-1}(w) = -\il_{\mathbb{S}^{d-1} \cap \{w\cdot e\leq 0\}} \hspace{-3ex} w\cdot e \d \sigma_{d-1}(w) = \frac{ 1}{2} \il_{\mathbb{S}^{d-1}} |w\cdot e |\d \sigma_{d-1}(w) = \frac{ 1}{2}K_{d,1}. 
\end{align*}
\noindent Altogether, we find that 
\begin{align*}
|\mathbb{S}^{d-1}|^{-1}\il_{\mathbb{S}^{d-1} \cap \{w\cdot e\geq 0\}}\hspace{-4ex} \nabla\varphi(x)\cdot w \,\d \sigma_{d-1}(w)= \frac{\nabla\varphi(x)\cdot e}{2}\fint_{\mathbb{S}^{d-1}} | w \cdot e|\,\d \sigma_{d-1}(w) = \frac{ 1}{2}K_{d,1}\nabla \varphi (x)\cdot e. 
\end{align*}
In conclusion,
\begin{align}\label{eq:K1d}
&\lim_{\varepsilon \to 0} \il_{(y-x)\cdot e\geq 0} \hspace{-3ex} \frac{(\varphi(y)-\varphi(x)) }{|x-y|} (1\land |x-y|^p)\nu_\varepsilon(x-y) \,\d y= \frac{1}{2}K_{d,1}\nabla \varphi(x)\cdot e.
\intertext{Analogously one is able to show that}\label{eq:K1d-p}
&\lim_{\varepsilon \to 0} \il_{(y-x)\cdot e\leq 0} \hspace{-3ex} \frac{(\varphi(y)-\varphi(x)) }{|x-y|} (1\land |x-y|^p)\nu_\varepsilon(x-y) \,\d y = \frac{1}{2}K_{d,1}\nabla \varphi(x)\cdot e.
\end{align}
\noindent By substituting the two relations \eqref{eq:K1d} and \eqref{eq:K1d-p} in \eqref{eq:nonlocal-liminf}, using the dominate convergence theorem one readily ends up with the desired estimate.
\end{proof}

\smallskip

\begin{proof}[\textbf{Proof of Theorem \ref{thm:liminf}}]
	The estimate \eqref{eq:cont-weak-derivatve} holds true for all $\varphi\in C_c^\infty (\Omega)$, all $1\leq p<\infty$ and $e=e_i,~~i=1,\cdots,d$ so that $\nabla \varphi (x)\cdot e_i = \partial_{x_i}\varphi (x)$.

\smallskip 
\noindent \textbf{Case $1<p<\infty$:} In virtue of the density of $C_c^\infty (\Omega)$ in $L^{p'}(\Omega)$, it readily follows from \eqref{eq:cont-weak-derivatve} that for each $i=1,\cdots,d$ the mapping $\varphi\mapsto \int_{\Omega} u(x) \partial_{x_i}\varphi (x)\d x $
uniquely extends as a continuous linear form on $L^{p'}(\Omega)$. Since $1<p'<\infty$, the Riesz representation for Lebesgue spaces reveals that there exists a unique $g_i\in L^p(\Omega)$ and we set $\partial_{x_i} u= -g_i$, such that
	\begin{align*}
	\int_{\Omega} u(x) \partial_{x_i}\varphi (x)~\mathrm{d}x = \int_{\Omega} g_i(x) \varphi (x)~\mathrm{d}x=- \int_{\Omega} \partial_{x_i} u(x) \varphi (x)~\mathrm{d}x \quad\text{for all} \quad \varphi\in C_c^\infty(\Omega). 
	\end{align*}
	
\noindent 	In order words, $u\in W^{1,p}(\Omega)$.  Further, the $L^p$-duality and  \eqref{eq:cont-weak-derivatve} yields the estimate \eqref{eq:gradient-estimate} as follows
%	\vspace{-1ex}
\begin{align*}
\|\nabla u\|_{L^p(\Omega)} \leq \sqrt{d}\sum_{i=1}^d \|\partial_{x_i} u\|_{L^p(\Omega)}
&=\sqrt{d}\sum_{i=1}^d \sup_{\stackrel{\varphi\in C_c^\infty(\R^d)}{\|\varphi\|_{L^{p'}(\Omega)}=1}} \Big|\int_{\Omega} u(x) \, \nabla\varphi (x)\cdot e_i\d x\Big|\leq d^{2}\frac{A_p^{1/p}}{ K_{d,1}}.
\end{align*}
%\vspace{-1ex}

\noindent \textbf{Case $p=1$:} Let $\chi =(\chi_1,\chi_2,\cdots,\chi_d)\in C_c^\infty(\Omega, \R^d)$ such that $\|\chi\|_{L^\infty(\Omega,\R^d)}\leq 1$ and $e=e_i,\,\,i=1,2\cdots, d$. Since $\chi_i\in C_c^\infty(\Omega)$, the estimate \eqref{eq:cont-weak-derivatve} implies 
\begin{align*}
\Big|\int_{\Omega} u(x) \, \operatorname{div}\chi \d x\Big| 
&=\Big|\sum_{i=1}^d\int_{\Omega} u(x) \, \nabla\chi_i(x)\cdot e_i\d x\Big|\leq d\frac{A_1}{K_{d,1}}. 
\end{align*} 
\noindent Hence $u\in BV(\Omega)$ and we have $|u|_{BV(\Omega)}\leq d\frac{A_1}{K_{d,1}}$ which is the estimate \eqref{eq:gradient-estimate}.
\end{proof}

%\smallskip

\noindent The next result improves the estimate \eqref{eq:gradient-estimate}. 
\begin{theorem}\label{thm:liminf-BBM} Let $\Omega\subset \R^d$ be open. If $u \in L^p(\Omega)$ with $1<p<\infty$ or $u\in W^{1,1}(\Omega)$ for $p=1$ then
	\begin{align*}
 &	K_{d,p} \| \nabla u\|^p_{L^p(\Omega)} \leq	\liminf_{\varepsilon\to 0} \iil_{\Omega\Omega}|u(x) -u(y)|^p \nu_\varepsilon(x-y)\d x\d y=A_p.
 \end{align*}
\noindent Moreover if $p=1$ and $u\in L^1(\Omega)$ then we have 
 \begin{align*}
 &		K_{d,1} |u|_{BV(\Omega)}\leq	\liminf_{\varepsilon\to 0} \iil_{\Omega\Omega}|u(x) -u(y)| \nu_\varepsilon(x-y)\d x\d y= A_1.
	\end{align*} 
\end{theorem}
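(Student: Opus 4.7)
The plan is to establish the sharp lower bound by a mollification argument: prove the inequality for smooth $v$ via Taylor expansion combined with the rotation invariance of $\nu_\varepsilon$, then reduce the general case via standard mollification and lower semicontinuity. Without loss of generality $A_p<\infty$, so Theorem \ref{thm:liminf} already guarantees $u\in W^{1,p}(\Omega)$ for $1<p<\infty$ and $u\in BV(\Omega)$ for $p=1$.

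For the smooth case, fix $v\in C^1(U)$ on an open $U\subset\R^d$, a relatively compact $V\Subset U$, and $0<\eta<\dist(V,\partial U)$. A first-order Taylor expansion yields $v(x+h)-v(x)=\nabla v(x)\cdot h+r(x,h)$ with $|r(x,h)|\le \omega_v(\eta)|h|$, where $\omega_v(\eta)\to 0$ as $\eta\to 0$ is the uniform modulus of continuity of $\nabla v$ on a neighborhood of $V$. Together with $\bigl||a|^p-|b|^p\bigr|\le p(|a|+|b|)^{p-1}|a-b|$ this gives
\begin{align*}
\bigl||v(x+h)-v(x)|^p-|\nabla v(x)\cdot h|^p\bigr|\le C_v\,\omega_v(\eta)\,|h|^p.
\end{align*}
The radiality of $\nu_\varepsilon$, polar coordinates, and \eqref{eq:rotation-invariant-constant} yield the exact identity
\begin{align*}
\int_{B_\eta}|\nabla v(x)\cdot h|^p\nu_\varepsilon(h)\,dh=K_{d,p}\,|\nabla v(x)|^p\int_{B_\eta}|h|^p\nu_\varepsilon(h)\,dh.
\end{align*}
Integrating over $V$, using $\int_{B_\eta}|h|^p\nu_\varepsilon(h)\,dh\to 1$ as $\varepsilon\to 0$ (Remark \ref{rem:asymp-nu}), and observing that $\{(x,h):x\in V,\,|h|<\eta\}$ corresponds under $y=x+h$ to a subset of $U\times U$, I obtain
\begin{align*}
K_{d,p}\|\nabla v\|_{L^p(V)}^p-C_v\,\omega_v(\eta)\,|V|\le \liminf_{\varepsilon\to 0}\iint_{UU}|v(x)-v(y)|^p\nu_\varepsilon(x-y)\,dy\,dx.
\end{align*}
Sending $\eta\to 0$ (for fixed $V$), and then $V\uparrow U$ by monotone convergence, gives the sharp estimate for smooth $v$ on $U$.

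For general $u$, let $u_\delta:=u*\phi_\delta$ on $\Omega_\delta:=\{x\in\Omega:\dist(x,\partial\Omega)>\delta\}$ with $\phi_\delta$ a standard radial mollifier. Jensen's inequality applied to $u_\delta(x)-u_\delta(y)=\int\phi_\delta(z)(u(x-z)-u(y-z))\,dz$, together with Fubini and translation invariance of Lebesgue measure (using $\Omega_\delta-z\subset\Omega$ for $|z|<\delta$), yields
\begin{align*}
\iint_{\Omega_\delta\Omega_\delta}|u_\delta(x)-u_\delta(y)|^p\nu_\varepsilon(x-y)\,dx\,dy\le \iint_{\Omega\Omega}|u(x)-u(y)|^p\nu_\varepsilon(x-y)\,dx\,dy.
\end{align*}
The smooth estimate applied to $v=u_\delta$ on $U=\Omega_\delta$ then gives $K_{d,p}\|\nabla u_\delta\|_{L^p(\Omega_\delta)}^p\le A_p$. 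Sending $\delta\to 0$: for $u\in W^{1,p}(\Omega)$ one has $\nabla u_\delta=(\nabla u)*\phi_\delta\to\nabla u$ in $L^p_{\loc}(\Omega)$ and $\|\nabla u_\delta\|_{L^p(\Omega_\delta)}\to\|\nabla u\|_{L^p(\Omega)}$; for the ``Moreover'' $BV$ case, lower semicontinuity of the $BV$ seminorm under $L^1_{\loc}$ convergence of $u_\delta\to u$ gives $|u|_{BV(\Omega)}\le\liminf_\delta\|\nabla u_\delta\|_{L^1(\Omega_\delta)}$, hence $K_{d,1}|u|_{BV(\Omega)}\le A_1$.

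The main obstacle is the Taylor step: scales must be matched so that the error $O(\omega_v(\eta))$ vanishes as $\eta\to 0$ while the principal term retains the sharp factor $K_{d,p}$ and the mass of $\nu_\varepsilon$ outside $B_\eta$ is discarded in the $\varepsilon\to 0$ limit. It is precisely this direct polar integration (replacing the $L^p$–$L^{p'}$ duality that produced $K_{d,1}$ in Theorem \ref{thm:liminf}) that yields the sharp constant $K_{d,p}$.
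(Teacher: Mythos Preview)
Your proposal is correct and follows essentially the same route as the paper. Both arguments (i) mollify $u$ to $u_\delta=u*\phi_\delta$ on $\Omega_\delta$, (ii) use Jensen's inequality and translation invariance to bound the $\nu_\varepsilon$-energy of $u_\delta$ on $\Omega_\delta\times\Omega_\delta$ by that of $u$ on $\Omega\times\Omega$, (iii) apply the sharp smooth-case estimate via Taylor expansion, polar coordinates, and \eqref{eq:rotation-invariant-constant}, and (iv) let $\delta\to 0$. The only presentational differences are that the paper packages step (iii) as Lemma~\ref{lem:BBM-regular} (and exhausts $\Omega_\delta$ by the bounded sets $\Omega_\delta\cap B_j(0)$ to fit its $C^1_c$ hypothesis, whereas you work directly with $V\Subset U$), and for the $BV$ case the paper writes out the duality argument with test fields $\chi\in C_c^\infty(\Omega,\R^d)$ explicitly rather than invoking lower semicontinuity of $|\cdot|_{BV}$ under $L^1_{\loc}$ convergence.
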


\begin{proof} 
%We propose two different proofs here. 
\noindent\textbf{First proof.} 
For $\delta >0$ small, set $\Omega_\delta =\{x\in \Omega: \operatorname{dist}(x,\partial\Omega)>\delta\}.$
	 Define the mollifier $\phi_\delta(x)= \frac{1}{\delta^d}\phi\left(\frac{x}{\delta}\right)$ with support in $B_\delta(0)$ where $\phi \in C_c^{\infty}(\R^d)$ is supported in $B_1(0)$, $\phi \geq 0$ and $ \int_{} \phi = 1$. 
	We assume that $u$ is extended by zero off $\Omega$ and let $u^\delta = u*\phi_\delta$ is the convolution product of $u $ and $\phi_\delta$. If $z\in\Omega_\delta $ and $|h|\le \delta$ then $z-h\in \Omega_\delta-h \subset \Omega$. A change of variables implies 
	\begin{align*}
	\iil_{\Omega\Omega}|u(x) -u(y)|^p \nu_\varepsilon(x-y)\d y\,\d x
	&\geq\hspace{-2ex} \iil_{\Omega_\delta-h\Omega_\delta-h}|u(x) -u(y)|^p \nu_\varepsilon(x-y)\d y\,\d x\\
	&=\iil_{\Omega_\delta\Omega_\delta} |u(x-h) -u(y-h)|^p \nu_\varepsilon(x-y)\d x\d y.
	\end{align*}
	Thus given that $\int_{} \phi_\delta \d h= 1$, integrating with respect to 
	$\phi_\delta(h)dh$, Jensen's inequality yields 
	\begin{align*}
	\iil_{\Omega\Omega}|u(x) -u(y)|^p \nu_\varepsilon(x-y)\d y\,\d x
	&\geq \int_{\R^d} \phi_\delta(h)\d h \iil_{\Omega_\delta\Omega_\delta} |
	u(x-h) -u(y-h)|^p \nu_\varepsilon(x-y)\d y\,\d x\\ 
	%&= \iil_{\Omega_\delta\Omega_\delta} \int_{\R^d} |u(x-h) -u(y-h)|^p \phi_\delta(h) \d h \,\nu_\varepsilon(x-y)\d x\d y\\ 
	& \geq \iil_{\Omega_\delta\Omega_\delta} \Big| \int_{\R^d} \big(u(x-h) -u(y-h)\big) \phi_\delta(h) \d h\Big|^p \nu_\varepsilon(x-y)\d x\d y\\
	&= \iil_{\Omega_\delta\Omega_\delta} |u*\phi_\delta (x) -u*\phi_\delta (y)|^p \nu_\varepsilon(x-y)\d x\d y.
	\end{align*}
	In other words, we have 
	\begin{align}\label{eq:molification-convex-Jessen}
	\iil_{\Omega_\delta\Omega_\delta} |u^\delta (x) -u^\delta (y)|^p \nu_\varepsilon(x-y)\d x\d y\leq \iil_{\Omega\Omega}|u(x) -u(y)|^p \nu_\varepsilon(x-y)\d x\d y.
	\end{align}
	Note that $u^\delta\in C^\infty(\R^d)$ and $\Omega_{\delta,j}= \Omega_\delta\cap B_j(0)$ has a compact closure for each $j\geq 1$.
	%and thus $u^\delta|_{\overline{\Omega_{\delta, j}} }$ belongs to $C^1_c(\overline{\Omega_{\delta, j}})$. 
	Then for each $j\geq 1$ the Lemma \ref{lem:BBM-regular} implies
	\begin{align*}
	K_{d,p}\int_{\Omega_{\delta,j} } |\nabla u^\delta (x)|^p \d x
	&=	\lim_{\varepsilon\to 0} \iil_{\Omega_{\delta,j}\Omega_{\delta,j}}|u(x) -u(y)|^p \nu_\varepsilon(x-y)\d x\d y\\
	&\leq 	\liminf_{\varepsilon\to 0} \iil_{\Omega\Omega}|u(x) -u(y)|^p \nu_\varepsilon(x-y)\d x\d y=A_p.
	\end{align*}
	Tending $j\to\infty$ in the latter we get
	\begin{align}\label{eq:limit-approx}
	K_{d,p}\int_{\Omega_\delta} |\nabla u^\delta (x)|^p \d x\leq A_p.
	\end{align}
	
\noindent \textbf{Case $1<p<\infty$: } 	The only interesting scenario occurs if $A_p<\infty$. In this case, Theorem \ref{thm:liminf} ensures that $u\in W^{1,p}(\Omega)$. Clearly we have $\nabla u^\delta= \nabla( u*\phi_\delta)= \nabla u*\phi_\delta $ and  $\| \phi_\delta* \nabla u - \nabla u \|_{L^p(\Omega)}\to 0$ as $\delta \to 0$.
	 The desired inequality follows by letting $\delta\to 0$ in \eqref{eq:limit-approx} since 
	\begin{align*}
	\Big| \| \nabla u \|_{L^p(\Omega)} -\|\nabla u *\phi_\delta \|_{L^p(\Omega_\delta)} \Big|\leq \| \nabla u *\phi_\delta- \nabla u \|_{L^p(\Omega)} + \Big(\int_{\Omega\setminus \Omega_\delta} |\nabla u (x)|^p\d x\Big)^{1/p}\xrightarrow{\delta\to 0} 0.
	\end{align*}
	
\noindent \textbf{Case $p=1$: } Again we only need to assume that $A_1<\infty$ so that by Theorem \ref{thm:liminf}, $u\in BV(\Omega)$. The relation \eqref{eq:limit-approx} implies that 
	\begin{align*}
	K_{d,1}\liminf\limits_{\delta\to 0}\int_{\Omega_\delta} |\nabla u^\delta (x)| \d x\leq A_1.
	\end{align*}
	
		\noindent Let $\chi\in C_c^\infty(\Omega, \R^d)$ such that $\|\chi\|_{L^\infty(\Omega,\R^d)}\leq 1$ and $\operatorname{supp} \chi\subset \Omega_\delta$ for $\delta>0$ small. We find that 
	\begin{align*}
	\Big| \int_{\Omega} u(x) \operatorname{div} \chi(x)\d x - \int_{\Omega_\delta} u^\delta (x) \operatorname{div} \chi(x)\d x\Big|&=\Big| \int_{\Omega_\delta} (u (x) -u*\phi_\delta (x)) \operatorname{div} \chi(x)\d x %+\int_{\Omega\setminus \Omega_\delta} \hspace{-3ex} u*\phi_\delta(x) \operatorname{div} \chi(x)\d x
 \Big|\\&\leq\|\operatorname{div}\chi\|_{L^\infty(\Omega,\R^d)}\|u*\phi_\delta-u\|_{L^1(\Omega)} %+\|\operatorname{div} \chi\|_{L^\infty(\Omega)} \|\phi\|_{L^1(\R^d)}\int_{\Omega\setminus \Omega_\delta} |u (x) |\d x
	\xrightarrow{\delta\to 0}0. 
	\end{align*}

	\noindent Thus, since $u$ is a distribution on $\Omega$ we get
	\begin{alignat*}{2}
	\int_{\Omega} u(x) \operatorname{div} \chi(x)\d x&=\lim_{\delta\to 0} \int_{\Omega_\delta} u^\delta (x) \operatorname{div} \chi(x)\d x\\
	&=-\lim_{\delta\to 0} \int_{\Omega_\delta} \nabla u^\delta (x) \cdot \chi(x)\d x 
	\leq \liminf_{\delta\to 0}\int_{\Omega_\delta} |\nabla u^\delta (x)| \d x.
	\end{alignat*}
\noindent 	This completes the proof since the above holds for arbitrarily chosen $\chi\in C_c^\infty(\Omega, \R^d)$ such that $\|\chi\|_{L^\infty(\Omega,\R^d)}\leq 1$, by definition of $|\cdot |_{BV(\Omega)}$ and the previous estimate we get
	\begin{align*}
	K_{d,1} |u|_{BV(\Omega)}\leq
	\liminf_{\delta\to 0}\int_{\Omega_\delta} |\nabla u^\delta (x)| \d x \leq A_1. 
	\end{align*}

\noindent \textbf{Second proof.} Here is an alternative. Since for all $\delta>0$, $\int_{B_\delta(0)}|h|^p\nu_\eps(h)\d h\to1$ as $\eps\to0$ (see the formula \eqref{eq:concentration-bis}), for each compact set $K\subset \Omega$ and $\eta>0$ inequality \eqref{eq:optimal-non-levy} implies
\begin{align*}%\label{eq:optimal-non-levy}
\begin{split}
\liminf_{\eps\to 0}\iil_{\Omega\Omega}|u(x) -u(y)|^p \nu_\varepsilon(x-y)\d y\,\d x&\geq \liminf_{\eps\to 0}\big(\int_{B_\delta(0)}|h|^p\nu_\eps(h)\d h\big)\big(K^{1/p}_{d,p}\|\nabla u\|_{L^p(K)}-\eta\big)^p\\
&=\big(K^{1/p}_{d,p}\|\nabla u\|_{L^p(K)}-\eta\big)^p. 
\end{split}
\end{align*}
Let $K_j=\overline{\Omega}_j\subset \Omega_{j+1}$ and $(\Omega_j)_j$ be an exhaustion of $\Omega$. Since the above inequality is true for every compact set $K=K_j\subset \Omega$ and every $\eta>0$ we conclude that 
\begin{align*}
\liminf_{\eps\to 0}\iil_{\Omega\Omega}|u(x) -u(y)|^p \nu_\varepsilon(x-y)\d y\,\d x\geq K_{d,p}\|\nabla u\|^p_{L^p(\Omega)}.
\end{align*}
The case $p=1$ and $u\in BV(\Omega)$, follows from  the approximation Theorem \ref{thm:meyer-serrin-bv}. 
\end{proof}

\noindent It is worth to mention that the convolution technique used in the first proof above was  first used in \cite{Brezis-const-function} when $\Omega=\R^d$ and also appears in \cite{Pon04}. The next theorem is a the counterpart of Theorem \ref{thm:liminf-BBM} and is a refinement version of Theorem \ref{thm:limpsup}.

\begin{theorem}\label{thm:limsup_BBM} Let $\Omega\subset \R^d$ be a $W^{1,p}$-extension domain and $u\in L^p(\Omega)$, $p>1$ then we have 
	\begin{align*}
	\limsup_{\varepsilon\to 0}\iil_{\Omega\Omega}|u(x)-u(y)|^p\nu_\varepsilon(x-y) \mathrm{d}y \d x\leq	K_{d,p} \| \nabla u\|^p_{L^p(\Omega)}. 
	\end{align*}
Moreover, for $p=1$, if $\Omega$ is a $BV$-extension domain and $u\in L^1(\Omega)$ then we have
\begin{align*}	\limsup_{\varepsilon\to 0}\iil_{\Omega\Omega}|u(x)-u(y)|\nu_\varepsilon(x-y) \mathrm{d}y \d x\leq	K_{d,1} 
	 |u|_{BV(\Omega)}. 
	\end{align*}
\end{theorem}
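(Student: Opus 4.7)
The plan for the Sobolev case $1 \leq p < \infty$ is to reduce everything to an estimate on a function defined on all of $\R^d$, via the extension operator, and then unfold the nonlocal form by the fundamental theorem of calculus. Let $\bar u \in W^{1,p}(\R^d)$ be an extension of $u$. After the change of variables $h = y - x$,
\begin{equation*}
\iint_{\Omega\Omega}|u(x)-u(y)|^p \nu_\eps(x-y)\,\d x\, \d y = \int_{\R^d} \nu_\eps(h) \int_{\Omega \cap (\Omega - h)} |\bar u(x + h) - \bar u(x)|^p\,\d x\, \d h.
\end{equation*}
I would split the outer integral at $|h| = \delta$. The tail part is controlled by $2^p \|u\|^p_{L^p(\Omega)}\int_{|h|>\delta}\nu_\eps(h)\,\d h$, which vanishes as $\eps \to 0$ by the concentration property (Remark~\ref{rem:asymp-nu}). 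On the core $|h| \leq \delta$, the ACL representative of $\bar u$ together with Jensen's inequality yields $|\bar u(x+h) - \bar u(x)|^p \leq \int_0^1 |\nabla \bar u(x+th)\cdot h|^p\,\d t$ pointwise a.e.; swapping the order of integration and substituting $z = x + th$ turns the core into
\begin{equation*}
\int_0^1 \d t \int_{|h|\leq\delta} \nu_\eps(h)\,\d h \int_{(\Omega + th)\cap(\Omega - (1-t)h)} |\nabla \bar u(z)\cdot h|^p\,\d z.
\end{equation*}

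The heart of the proof lies in replacing the translated intersection $(\Omega+th) \cap (\Omega - (1-t)h)$ by $\Omega$ with a controlled error. That set sits inside $\Omega \cup (\Omega^{|h|}\setminus \Omega)$, where $\Omega^{|h|}$ is the $|h|$-neighborhood of $\Omega$; since the $W^{1,p}$-extension hypothesis forces $\Omega$ to be a $d$-set, we have $|\partial \Omega| = 0$ and dominated convergence gives $\int_{\Omega^{|h|}\setminus\Omega}|\nabla \bar u|^p\,\d z \to 0$ as $|h|\to 0$. Hence $\int_{(\Omega+th)\cap(\Omega-(1-t)h)} |\nabla \bar u(z)\cdot h|^p\,\d z \leq \int_\Omega|\nabla u(z)\cdot h|^p\,\d z + |h|^p\gamma(\delta)$ with $\gamma(\delta)\to 0$. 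Plugging this bound back, the rotation invariance~\eqref{eq:rotation-invariant-constant} (applicable since $\nu_\eps$ is radial) converts the main integral into $K_{d,p}\|\nabla u\|^p_{L^p(\Omega)}\int_{|h|\leq \delta}|h|^p\nu_\eps(h)\,\d h$, and the radial integral tends to $1$ as $\eps\to 0$. Taking $\limsup_\eps$ followed by $\delta \to 0$ produces the sharp bound $K_{d,p}\|\nabla u\|^p_{L^p(\Omega)}$.

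The $BV$ case is handled by a mollification argument on top of the $W^{1,1}$ estimate just obtained. Let $\bar u \in BV(\R^d)$ be a $BV$-extension obeying $|\nabla \bar u|(\partial \Omega) = 0$, and set $\bar u^\sigma = \bar u * \phi_\sigma \in C^\infty(\R^d)\cap W^{1,1}(\R^d)$. Applying the $W^{1,1}$ estimate to $\bar u^\sigma|_\Omega$ gives $\limsup_\eps \iint_{\Omega\Omega}|\bar u^\sigma(x)-\bar u^\sigma(y)|\nu_\eps(x-y)\,\d x\, \d y \leq K_{d,1}\|\nabla \bar u^\sigma\|_{L^1(\Omega)}$. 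Since $\bar u^\sigma \to \bar u$ in $L^1(\R^d)$, hence pointwise a.e.\ along a subsequence, Fatou's lemma gives, for each fixed $\eps$, $\iint_{\Omega\Omega}|u(x)-u(y)|\nu_\eps(x-y)\,\d x\, \d y \leq \liminf_\sigma \iint_{\Omega\Omega}|\bar u^\sigma(x)-\bar u^\sigma(y)|\nu_\eps(x-y)\,\d x\, \d y$. Combined with the standard minmax inequality $\limsup_\eps \liminf_\sigma \leq \liminf_\sigma \limsup_\eps$, this delivers $\limsup_\eps \iint_{\Omega\Omega}|u(x)-u(y)|\nu_\eps(x-y)\,\d x\,\d y \leq \liminf_\sigma K_{d,1}\|\nabla \bar u^\sigma\|_{L^1(\Omega)}$. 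Finally, using $\|\nabla \bar u^\sigma\|_{L^1(\Omega)} \leq \int(\phi_\sigma*\mathds{1}_\Omega)(y)\,\d|\nabla \bar u|(y)$ together with the pointwise convergence $\phi_\sigma*\mathds{1}_\Omega \to \mathds{1}_{\bar \Omega}$, the hypothesis $|\nabla \bar u|(\partial \Omega) = 0$ forces $\limsup_\sigma \|\nabla \bar u^\sigma\|_{L^1(\Omega)} \leq |u|_{BV(\Omega)}$, closing the proof.

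The main obstacle is controlling the boundary layer in both cases. In the Sobolev case the $d$-set consequence $|\partial \Omega| = 0$ of the extension hypothesis is precisely what is needed to absorb the translated intersection into $\Omega$ without losing sharpness; in the $BV$ case the analogous role is played by the assumption~\eqref{eq:bv-boundary-extension}, which is the reason the extra condition is imposed at the outset. Everything else reduces to the $\R^d$-computation outlined at the start of Section~\ref{sec:charact-W1p} together with the rotation identity~\eqref{eq:rotation-invariant-constant}.
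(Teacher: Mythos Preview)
Your Sobolev argument is essentially the paper's, with one cosmetic difference: the paper first passes to smooth approximants $u_n\to\bar u$ in $W^{1,p}(\R^d)$, performs the fundamental theorem of calculus plus polar coordinates on $u_n$, bounds the shifted domain of integration by the $\delta$-neighbourhood $\Omega(\delta)$, and only then invokes Fatou to return to $\bar u$; you work directly with the ACL representative of $\bar u$. Both routes lead to the same $\eps$-uniform inequality
\[
\iint_{\Omega\Omega}|u(x)-u(y)|^p\nu_\eps(x-y)\,\d x\,\d y
\;\leq\; K_{d,p}\int_{\Omega(\delta)}|\nabla\bar u|^p + 2^p\|u\|^p_{L^p(\Omega)}\int_{|h|>\delta}\nu_\eps(h)\,\d h,
\]
and then $\eps\to 0$, $\delta\to 0$ together with $|\partial\Omega|=0$ finishes. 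Your handling of the boundary layer (splitting $\Omega^{|h|}$ into $\Omega$ and $\Omega^{|h|}\setminus\Omega$) is just the explicit version of this same step.

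The $BV$ part, however, has a genuine gap. The ``standard minmax inequality $\limsup_\eps\liminf_\sigma\leq\liminf_\sigma\limsup_\eps$'' is \emph{false} in general (take $a_{\eps,\sigma}=\mathds{1}_{\{\eps\geq\sigma\}}$: the left side is $1$, the right side is $0$). So the chain
\[
\limsup_\eps f(\eps)\;\leq\;\limsup_\eps\liminf_\sigma g(\eps,\sigma)\;\stackrel{?}{\leq}\;\liminf_\sigma\limsup_\eps g(\eps,\sigma)\;\leq\;\liminf_\sigma K_{d,1}\|\nabla\bar u^\sigma\|_{L^1(\Omega)}
\]
breaks at the marked step. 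The paper avoids this by \emph{not} passing to $\limsup_\eps$ before approximating: it applies the $\eps$-uniform estimate displayed above (with $p=1$) to each smooth approximant $u_n$, uses Fatou in $n$ at fixed $\eps$ to obtain
\[
\iint_{\Omega\Omega}|u(x)-u(y)|\nu_\eps(x-y)\,\d x\,\d y
\;\leq\; K_{d,1}\,|\bar u|_{BV(\Omega(\delta))} + 2\|u\|_{L^1(\Omega)}\int_{|h|>\delta}\nu_\eps(h)\,\d h,
\]
and only then sends $\eps\to 0$ and $\delta\to 0$ using $|\nabla\bar u|(\partial\Omega)=0$. You actually have this $\eps$-uniform inequality implicitly in your Sobolev step; if you feed it to your mollified $\bar u^\sigma$ and run Fatou in $\sigma$ at fixed $\eps$ \emph{before} taking $\limsup_\eps$, your argument closes without needing any limit swap.
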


\smallskip

\begin{proof}
	The cases $\|\nabla u\|_{L^p(\Omega)} =\infty$ and $|u|_{BV(\Omega)}=\infty$ are trivial. Assume $u\in W^{1,p}(\Omega)$ and let $\overline{u}\in W^{1,p}(\R^d)$ be its extension to $\R^d$. Consider 
	$\Omega(\delta) = \Omega+ B_\delta(0) = \{x\in\R^d~:\operatorname{dist}(x, \Omega)<\delta \}$ be a neighborhood of $\Omega$ where $0<\delta<1$. We claim that for each $\varepsilon>0$, the following estimate holds 
	\begin{align}\label{eq:split-estimate}
	\iil_{\Omega\Omega}|u(x)-u(y)|^p\nu_\varepsilon(x-y) \mathrm{d}y \d x\leq K_{d,p}\int_{\Omega(\delta) }|\nabla \overline{u}(x)|^p\d x+ 2^p\|u\|^p_{L^p(\Omega)}\int_{|h|>\delta}\nu_\varepsilon(h)\,\d h.
	\end{align}

	\noindent Indeed, let $(u_n)_n$ be a sequence in $C_c^\infty(\R^d)$ converging to $\overline{u}$ in $W^{1,p}(\R^d)$.  For each $n\geq 1$, passing through the polar coordinates and using the identity \eqref{eq:rotation-invariant-constant} we find that
	\begin{align*}
	\iil_{ \Omega\times \Omega\cap \{|x-y|\leq \delta\}} \hspace{-2ex}|u_n(x)-u_n(y)|^p\nu_\varepsilon(x-y)\,\d y \,\d x
	&\leq \int_0^1\int_\Omega \int_{|h|\leq \delta} |\nabla u_n(x+th)\cdot h|^p\, \nu_\varepsilon(h)\,\d h \,\d x \d t\\
	&\leq \int_{|h|\leq \delta}\int_{\Omega(\delta)}|\nabla u_n(z)\cdot h|^p\,\d z ~\nu_\varepsilon(h)\,\d h\\
	&= \Big(\int_{\Omega(\delta)}\int_{\mathbb{S}^{d-1}} \hspace{-2ex}|\nabla u_n(z)\cdot w|^p\,\d\sigma_{d-1}(w)\Big)\Big(\int_{0}^\delta r^{p+d-1}\nu_\varepsilon(r)\,\d r\Big)\\
	&= K_{d,p} \Big(\int_{\Omega(\delta)}|\nabla u_n(z)|^p\,\d z \Big)\Big(\int_{|h|\leq \delta}(1\land |h|^p)\nu_\varepsilon(h)\,\d h\Big)\\
	&\leq K_{d,p} \int_{\Omega(\delta)}|\nabla u_n(z)|^p\,\d z. 
	\end{align*}
	\noindent Fatou's lemma implies 
	\begin{align*}
	\iil_{ \Omega\times \Omega\cap \{|x-y|\leq \delta\}} \hspace{-2ex}|u(x)-u(y)|^p\nu_\varepsilon(x-y)\,\d y \,\d x
	&\leq \liminf_{n\to\infty}\int_{\Omega} \int_{|x-y|\leq \delta} \hspace{-2ex}|u_n(x)-u_n(y)|^p\nu_\varepsilon(x-y)\,\d y \,\d x\\
	&\leq K_{d,p} \int_{\Omega(\delta)}\, |\nabla \overline{u}(x)|^p\d x.
	\end{align*}
The estimate \eqref{eq:split-estimate} clearly follows since we have
	\begin{align*}
	\int_{\Omega} \int_{\Omega\cap \{|x-y|> \delta\}} \hspace{-3ex} |u(x)-u(y)|^p\nu_\varepsilon(x-y)\,\d y \,\d x \leq 2^p \|u\|^p_{L^p(\Omega)}\int_{|h|>\delta}\nu_\varepsilon(h)\,\d h. 
	\end{align*}
	Letting $\varepsilon \to0$ the relation \eqref{eq:split-estimate} yields
	\begin{align*}
	\limsup_{\varepsilon\to 0} \iil_{\Omega\Omega}|u(x)-u(y)|^p\nu_\varepsilon(x-y) \d y \d x\leq K_{d,p}\int_{\Omega(\delta)}|\nabla u(x)|^p\d x
	\end{align*}
	Recalling that $\overline{u}\in W^{1,p}(\R^d)$, $u= \overline{u}\mid_\Omega$ and using \eqref{eq:lp-boundary-extension} the desired estimate follows 
	$$\int_{\Omega(\delta) }|\nabla \overline{u}(x)|^p\d x \xrightarrow{\delta\to 0} \int_{\overline{\Omega}}|\nabla \overline{u}(x)|^p\d x =\int_{\Omega}|\nabla u(x)|^p\d x.$$

\noindent If $p=1$ and $u\in BV(\Omega)$, let $\overline{u}\in BV(\R^d)$ be its extension to $\R^d$. By Theorem \ref{thm:meyer-serrin-bv} there is $(u_n)_n$ a sequence in $C^\infty(\R^d)\cap W^{1,1}(\R^d)$ which converges to $\overline{u}$ in $L^1(\R^d)$ and $\|\nabla u_n\|_{L^1(\R^d)}\xrightarrow{n \to \infty}| \overline{u}|_{BV(\R^d)}$. 

\noindent The estimate \eqref{eq:split-estimate} applied to $u_n$ and the Fatou's lemma yield 
\begin{align*}
\iil_{\Omega\Omega}|u(x)-u(y)|\nu_\varepsilon(x-y) \mathrm{d}y \d x
&\leq \liminf_{n\to\infty}\iil_{\Omega\Omega}|u_n(x)-u_n(y)|\nu_\varepsilon(x-y) \mathrm{d}y \d x \\
&\leq \lim_{n\to\infty} K_{d,1} \int_{\Omega(\delta)}|\nabla u_n(x)|\,\d x + 2\|u_n\|_{L^1(\Omega)}\int_{|h|>\delta}\nu_\varepsilon(h)\,\d h.\\
&= K_{d,1} |\overline{u}|_{BV(\Omega(\delta))} + 2\|u\|_{L^1(\Omega)}\int_{|h|>\delta}\nu_\varepsilon(h)\,\d h.
\end{align*}
Correspondingly, we also get the estimate  
\begin{align}\label{eq:split-estimate-BV}
\iil_{\Omega\Omega}|u(x)-u(y)|\nu_\varepsilon(x-y) \mathrm{d}y \d x
\leq K_{d,1} |\overline{u}|_{BV(\Omega(\delta))} + 2\|u\|_{L^1(\Omega)}\int_{|h|>\delta}\nu_\varepsilon(h)\,\d h.
\end{align}
\noindent Therefore, letting $\varepsilon\to 0$ implies that
\begin{align*}
\limsup_{\varepsilon\to 0} \iil_{\Omega\Omega}|u(x)-u(y)|^p\nu_\varepsilon(x-y) \d y \d x\leq K_{d,1}
| \overline{u}|_{BV(\Omega(\delta))}. 
\end{align*}
\noindent Recalling that $\overline{u}\in BV(\R^d) $, $u= \overline{u}\mid_\Omega$ and  $\partial\Omega$ satisfies \eqref{eq:bv-boundary-extension}, i.e., $|\nabla \overline{u}|(\partial\Omega)=0$ we have 
$$ | \overline{u}|_{BV(\Omega(\delta))}\xrightarrow{\delta\to 0} |\overline{u}|_{BV(\overline{\Omega})}=|u|_{BV(\Omega)}.$$
\end{proof}

\noindent The following result involves the collapse across the boundary $\partial\Omega$. 
%is a consequence of Theorem \ref{thm:liminf-BBM}.  
\begin{theorem}\label{thm:collap-bdary}
Assume $\Omega\subset \R^d$ is  open  then for any $u\in W^{1,p}(\R^d)$ we have 
\begin{align*}
\limsup_{\eps\to 0}2\iil_{\Omega \Omega^c} |u(x)-u(y)|^p\nu_\eps(x-y)\d y\,\d x
\leq  K_{d,p}\int_{\partial \Omega}|\nabla u(x)|^p\d x,\\
%Use the liminf the first and limsup for the second on $\partial \overline{\Omega}\subset \partial \Omega$
\liminf_{\eps\to 0}2\iil_{\Omega \Omega^c} |u(x)-u(y)|^p\nu_\eps(x-y)\d y\,\d x\geq K_{d,p}\int_{\partial \overline{\Omega}}|\nabla u(x)|^p\d x.
\end{align*}
The same holds for $p=1$by replacing  $W^{1,1}(\R^d)$ with $BV(\R^d)$. 
\end{theorem}

\begin{proof}
We only prove for $ u\in W^{1,p}(\R^d)$, the case $u\in BV(\R^d)$ is analogous. The sets $\Omega$ and $U_\delta=\{x\in \R^d: \dist(x,\Omega)>\delta \}$, $\delta>0$ are open. By Theorem \ref{thm:liminf-BBM}, we get  %\linebreak[-2]
\begin{align*}
&K_{d,p}\int_{\Omega}|\nabla u(x)|^p\d x
\leq \liminf_{\eps\to 0}  \iil_{\Omega \Omega} |u(x)-u(y)|^p\nu_\eps(x-y)\d y\,\d x,\\
%\leq K_{d,p} \|\nabla u\|^p_{L^p(\R^d)},\\
& K_{d,p}\int_{U_\delta}|\nabla u(x)|^p\d x 
\leq \liminf_{\eps\to 0} \iil_{U_\delta\, U_\delta} |u(x)-u(y)|^p\nu_\eps(x-y)\d y\,\d x. 
\end{align*}
Since  $U_\delta\subset \Omega^c$ and  $\mathds{1}_{U_\delta}(x)\to  \mathds{1}_{\R^d\setminus\overline{\Omega}}(x)$,  for all $x\in \R^d$ as $\delta\to 0$, it follows that
%the convergence dominated theorem implies 
\begin{align*}
K_{d,p}\int_{\R^d\setminus\overline{\Omega}}\hspace{-0.4ex} |\nabla u(x)|^p\d x
\leq \liminf_{\eps\to 0}  \iil_{\Omega^c \Omega^c} |u(x)-u(y)|^p\nu_\eps(x-y)\d y\,\d x. 
%\leq K_{d,p} \|\nabla u\|^p_{L^p(\R^d)}.  
\end{align*}
Accordingly, together with  \eqref{eq:lim-full-form}, we deduce the desired result as follows  
\begin{align*}
\limsup_{\eps\to 0}2\iil_{\Omega\Omega^c} &|u(x)-u(y)|^p\nu_\eps(x-y)\d y\,\d x\\
&=\limsup_{\eps\to0}\Big(\hspace{-0.5ex}\iil_{\R^d\R^d}-\iil_{\Omega\Omega} -\iil_{\Omega^c\Omega^c}\hspace{-0.5ex}\Big)|u(x)-u(y)|^p\nu_\eps(x-y)\d y\,\d x\\
%&=\limsup_{\eps\to 0}\big(\mathcal{E}^\eps_{\R^d}(u,u) -\mathcal{E}^\eps_{\Omega}(u,u)-\mathcal{E}^\eps_{\Omega^c}(u,u)\Big)\\
%&\leq \limsup_{\eps\to 0}\mathcal{E}^\eps_{\R^d}(u,u) -\liminf_{\eps\to 0}\mathcal{E}^\eps_{\Omega}(u,u)-\liminf_{\eps\to 0}\mathcal{E}^\eps_{\Omega^c}(u,u)\\
%&\leq K_{d,p} \Big(\int_{\R^d}-\int_{\Omega}-\int_{\Omega^c}\Big)|\nabla u(x)|^p\d x
&\leq K_{d,p} \Big(\|\nabla u\|^p_{L^p(\R^d)}-\|\nabla u\|^p_{L^p(\Omega)}-\|\nabla u\|^p_{L^p(\R^d\setminus\overline{\Omega})}\Big)= K_{d,p}\int_{\partial \Omega}|\nabla u(x)|^p\d x. 
\end{align*}
The reverse inequality follows analogously, since by exploiting \eqref{eq:split-estimate} (or \eqref{eq:split-estimate-BV}) one easily gets
\begin{align*}
&K_{d,p}\int_{\overline{\Omega}}|\nabla u(x)|^p\d x
\geq \limsup_{\eps\to 0}  \iil_{\Omega \Omega} |u(x)-u(y)|^p\nu_\eps(x-y)\d y\,\d x,\\
& K_{d,p}\int_{\Omega^c}|\nabla u(x)|^p\d x 
\geq \limsup_{\eps\to 0} \iil_{\Omega^c\Omega^c} |u(x)-u(y)|^p\nu_\eps(x-y)\d y\,\d x. 
\end{align*}
\end{proof}

\smallskip 

\noindent Next we establish a pointwise and $L^1(\Omega)$ convergence when $u$ is a sufficiently smooth function. 

\begin{lemma}\label{lem:BBM-regular}
	Let $\Omega\subset \R^d$ be open and $u\in C^1_c(\R^d)$. The following convergence occurs in both pointwise and $L^1(\Omega)$ sense:
	\begin{align*}
	\lim_{\varepsilon\to 0}\int_{\Omega}|u(x)-u(y)|^p\nu_\varepsilon(x-y) \mathrm{d}y = K_{d,p} | \nabla u(x)|^p.
	\end{align*}
\end{lemma}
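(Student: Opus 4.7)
The plan is to prove the pointwise statement first, then bootstrap it to $L^1(\Omega)$ convergence using the compact support of $u$. Throughout, set $f_\varepsilon(x) := \int_\Omega |u(x)-u(y)|^p \nu_\varepsilon(x-y)\,\d y$. For pointwise convergence at a fixed $x \in \Omega$, observe that $d(x) := \dist(x, \partial\Omega) > 0$ since $\Omega$ is open. I would substitute $h = y-x$ and split the integral into the near piece $\{|h| \le \delta\}$ and the far piece $\{|h| > \delta\}$ for some $0 < \delta < \min(d(x), 1)$; note that $B_\delta(x) \subset \Omega$, so the near integral is the same whether taken over $\Omega-x$ or $\R^d$. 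The far piece is bounded by $(2\|u\|_\infty)^p \int_{|h|>\delta} \nu_\varepsilon(h)\,\d h$, which vanishes by the concentration property in Remark~\ref{rem:asymp-nu}~(ii). For the near piece, I would use the Taylor expansion
\[
u(x+h)-u(x) = \nabla u(x)\cdot h + R(x,h), \qquad |R(x,h)| \le \eta(|h|)\,|h|,
\]
where $\eta$ is the modulus of continuity of $\nabla u$ (finite since $u \in C_c^1(\R^d)$), together with the elementary inequality $\bigl||a+b|^p-|a|^p\bigr| \le p(|a|+|b|)^{p-1}|b|$ (valid for $p\ge 1$), to obtain $\bigl||u(x+h)-u(x)|^p - |\nabla u(x)\cdot h|^p\bigr| \le C\,\eta(|h|)\,|h|^p$. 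Passing to polar coordinates and invoking the averaging identity~\eqref{eq:rotation-invariant-constant} reduces the main term to $K_{d,p}|\nabla u(x)|^p \int_{|h|\le\delta} |h|^p \nu_\varepsilon(h)\,\d h$, which tends to $K_{d,p}|\nabla u(x)|^p$ by Remark~\ref{rem:asymp-nu}~(i). Taking $\limsup_{\varepsilon\to 0}$ and then letting $\delta \to 0$ closes the pointwise step.

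For $L^1(\Omega)$ convergence I would partition $\Omega$ using the compact set $\widetilde K := \supp u + \overline{B_1(0)}$. On the bounded piece $\Omega \cap \widetilde K$, the elementary bound $|u(x)-u(y)|^p \le C(1 \wedge |x-y|^p)$ (Lipschitz and bounded), coupled with $\int_{\R^d}(1\wedge |h|^p)\nu_\varepsilon(h)\,\d h = 1$, yields $f_\varepsilon(x) \le C$ uniformly in $\varepsilon$; since $\Omega \cap \widetilde K$ has finite Lebesgue measure, the dominated convergence theorem upgrades the pointwise convergence to $L^1$-convergence on this piece. On the complement $\Omega\setminus\widetilde K$ both $u(x)$ and $\nabla u(x)$ vanish, and only $y \in \supp u$ contributes to $f_\varepsilon(x)$, with $|x-y|>1$; Fubini together with the concentration property then yields
\[
\int_{\Omega\setminus \widetilde K} f_\varepsilon(x)\,\d x \le \|u\|_\infty^p\, |\supp u|\, \int_{|h|>1} \nu_\varepsilon(h)\,\d h \xrightarrow{\varepsilon \to 0} 0,
\]
which gives $L^1$-convergence on $\Omega\setminus\widetilde K$ as well, since the target $K_{d,p}|\nabla u|^p$ vanishes identically there.

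I expect the main obstacle to be the control of $f_\varepsilon$ on the unbounded part of $\Omega$: no uniform $L^1$-majorant is available on the whole of $\Omega$, so a direct global application of dominated convergence is ruled out. The remedy is to exploit the compact support of $u$ to confine all non-trivial interactions to the bounded neighborhood $\widetilde K$, and to handle the tail on $\Omega\setminus\widetilde K$ only after exchanging the order of integration, so that the required smallness is produced by the concentration of $\nu_\varepsilon$ alone. The remaining ingredients --- the Taylor expansion and the spherical averaging identity~\eqref{eq:rotation-invariant-constant} --- are standard.
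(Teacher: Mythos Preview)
Your pointwise argument is essentially the same as the paper's first proof: split near/far, kill the far piece by the concentration property, linearize on the near piece, and reduce the main term to the spherical averaging identity~\eqref{eq:rotation-invariant-constant}. The paper phrases the error control slightly differently (writing $G_p(b)-G_p(a)$ with $G_p(s)=|s|^p$ and invoking the uniform-continuity relations~\eqref{eq:continuity-estimate} rather than your inequality $\bigl||a+b|^p-|a|^p\bigr|\le p(|a|+|b|)^{p-1}|b|$), but the substance is identical.

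For the $L^1(\Omega)$ step you take a genuinely different route. The paper appeals to Sch\'eff\'e's lemma: since $f_\varepsilon\ge 0$ converges pointwise, it suffices to check that $\int_\Omega f_\varepsilon\to K_{d,p}\int_\Omega|\nabla u|^p$, and this convergence of norms is obtained by dominating the near part $\int_{\Omega\cap\{|x-y|\le\eta\}}$ by $2^p|\nabla u(x)|^p$ (integrable because $\nabla u$ has compact support) and showing the far part is at most $2^p\|u\|_{L^p}^p\int_{|h|>\eta}\nu_\varepsilon\to 0$. You instead split $\Omega$ spatially into the bounded piece $\Omega\cap\widetilde K$ and the tail $\Omega\setminus\widetilde K$, run dominated convergence on the first using the uniform bound $f_\varepsilon\le C$, and handle the tail by Fubini plus concentration. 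Your route is more elementary in that it avoids Sch\'eff\'e and makes the role of the compact support of $u$ explicit; the paper's route is shorter once Sch\'eff\'e is granted and highlights $|\nabla u|^p$ as the natural dominating function. Both arguments are correct.
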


\smallskip 

\begin{proof}
\textbf{First proof of Lemma \ref{lem:BBM-regular}}. Let $\sigma>0$ be sufficiently small. By assumption $\nabla u$ is uniformly continuous and hence one can find $0<\eta=\eta(\sigma) <1$ such that if $|x-y|<\eta$ then 
	\begin{align}\label{eq:continuity-estimate}
	%\frac{1}{2} |\nabla u(x)|\leq 	|\nabla u(y)| \leq 2 |\nabla u(x)|\qquad\hbox{and}\qquad 
	|\nabla u(y)-\nabla u(x)| \leq \sigma.
	\end{align}

\noindent Let $\eta_x = \min (\eta, \delta_x)$ with $\delta_x= \operatorname{dist}(x, \partial \Omega)$ so that $B(x, \eta_x)\subset \Omega$ for all $x\in \Omega$. Consider the mapping $F:\Omega\times (0,1)\to \mathbb{R}$ with
	\begin{align*}
	F(x,\varepsilon):= \int\limits_{\Omega\cap \{|x-y|\leq \eta_x\}} \hspace{-2ex}|u(x)-u(y)|^p \nu_\varepsilon(x-y)\mathrm{d}y= \int_{|h|\leq \eta_x} |u(x)-u(x+h)|^p \nu_\varepsilon(h)\,\d h. 
	\end{align*} 
	In virtue of the fundamental theorem of calculus, we have 
	\begin{align*}
	F(x,\varepsilon)&=\int_{|h|\leq \eta_x} \Big|\int_{0}^{1}\nabla u(x+th)\cdot h\mathrm{d}t\Big|^p \nu_\varepsilon(h)\,\d h =\int_{|h|\leq \eta_x} |\nabla u(x)\cdot h|^p\, \nu_\varepsilon(h)\,\d h + R(x,\varepsilon), 
	\intertext{with the remainder}
	R(x,\varepsilon) &= \int_{|h|\leq \eta_x} \left(\Big|\int_{0}^{1}\nabla u(x+th)\cdot h\mathrm{d}t\Big|^p- \Big|\int_{0}^{1}\nabla u(x)\cdot h\mathrm{d}t\Big|^p \right)\nu_\varepsilon(h)\,\d h.
	\end{align*}
	The mapping $s \mapsto G_p(s)=|s|^p$ belongs to $C^1(\R^d\setminus \{0\})$ and $G'_p(s) = pG_p(s) s^{-1}$. Thus, we have 
	\begin{align*}
	G_p(b)-G_p(a)	= (b-a) \int_{0}^{1} G'_p(a+s(b-a))\,\d s. 
	\end{align*}
Set $a= \nabla u (x)\cdot h$ and $b= \int_0^1\nabla u (x+th)\cdot h \,\d t$ so that the relation \eqref{eq:continuity-estimate} yields
	\begin{align*}
	|G_p(b)-G_p(a)|	&\leq p|b-a| \int_{0}^{1} |(1-s)a+sb|^{p-1}\,\d s\\
	&\leq p\|\nabla u\|^{p-1}_{L^\infty(\R^d)} |h|^{p-1}\int_0^1 |\nabla u(x+th)-\nabla u(x)||h|\,\d t\\
	&\leq p \sigma\|\nabla u\|^{p-1}_{L^\infty(\R^d)} |h|^{p}. 
	\end{align*}
Integrating both sides with respect to $\nu_\eps(h)\d h$, implies that 
	%\begin{relsize}{-1}
\begin{align*}
|R(x,\varepsilon)|
%&:= \Big| \int_{|h|\leq \eta_x} \left(\Big|\int_{0}^{1}\nabla u(x+th)\cdot h\mathrm{d}t\Big|^p- \Big|\int_{0}^{1}\nabla u(x)\cdot h\mathrm{d}t\Big|^p \right)\nu_\varepsilon(h)\,\d h\Big|\\
%
&\leq p \sigma\|\nabla u\|^{p-1}_{L^\infty(\R^d)} \int_{|h|\leq \eta_x} |h|^p\nu_\varepsilon(h)\,\d h.
%= p \sigma\|\nabla u\|^{p-1}_{L^\infty(\R^d)} \il_{|h|\leq \eta_x} (1\land |h|^p)\nu_\eps(h)\,\d h
%\xrightarrow{\varepsilon, \sigma \to 0} 0.
\end{align*}
\noindent Since $\int_{|h|\leq \eta_x}  |h|^p\nu_\eps(h)\,\d h\to 1$ as $\eps \to 0$, by the formula \eqref{eq:concentration-bis}, letting $\eps\to 0$ and $\sigma\to0$ successively 
yields $R(x,\eps)\to 0$. 
\noindent Whereas, using polar coordinates,  the relation \eqref{eq:rotation-invariant-constant} and the Remark \ref{rem:asymp-nu} gives 
	
	\begin{align*}
	\int_{|h|\leq \eta_x} |\nabla u(x)\cdot h|^p \, \nu_\varepsilon(h)\,\d h
	&= \int_{0}^{\eta_x} r^{d+p-1} \mathrm{d}r \int_{\mathbb{S}^{d-1}}|\nabla u(x)\cdot w|^p\mathrm{d}\sigma_{d-1}(w)\\
	&=K_{d,p} |\nabla u(x)|^p \int_{|h|\leq \eta_x} |h|^p\nu_\varepsilon(h)\,\d h \xrightarrow{\varepsilon \to 0} K_{d,p} |\nabla u(x)|^p.
	\end{align*}
Therefore, we have $F(x, \varepsilon) \xrightarrow{\eps\to0}K_{d,p} |\nabla u(x)|^p.$	Furthermore, a close look to our reasoning reveals that we have subsequently shown that 
	\begin{align}\label{eq:pointwise-close-limit}
	\lim_{\varepsilon\to 0}\il_{\Omega\cap \{|x-y|\leq \delta\}}\hspace{-2ex} |u(x)-u(y)|^p\nu_\varepsilon(x-y)\,\d y = K_{d,p} |\nabla u(x)|^p,\quad\text{for all $\delta>0$}.
	\end{align}
	This is due to the fact that, for all $\delta >0$  we have 
	\begin{align}\label{eq:limit-point-far}
	\int\limits_{ \Omega\cap \{|x-y|>\delta \}} \hspace{-3ex}|u(x)-u(y)|^p \nu_\varepsilon(x-y) \, \mathrm{d}y 
	\leq 2^p\|u\|^p_{L^{\infty}(\R^d)} \int_{|h|>\delta} \nu_\varepsilon(h) \mathrm{d}h
	\xrightarrow{\varepsilon \to 0} 0.
	\end{align}
	Hence we have the pointwise convergence as claimed, i.e., for all $x\in\Omega$ we have 
	\begin{align}\label{eq:pointwise-conv}
	\lim_{\varepsilon\to 0}\int_{\Omega} |u(x)-u(y)|^p \nu_\varepsilon(x-y) \d y\,\d x = K_{d,p} |\nabla u(x)|^p. 
	\end{align}
	To proceed with the convergence in $L^1(\Omega)$, according to the Sch\'eff\'e lemma \cite[p.55]{Wil91}, it suffices to show the convergence of $L^1(\Omega)$-norm. Choosing $R\geq1$ such that $\supp u\subset B_R(0)$, we write
	\begin{align*}
	\iil_{\Omega\Omega}|u(x)-u(y)|^p\nu_\varepsilon(x-y)\,\d y \,\d x
	&= \iil_{\Omega\times \Omega\cap 
		\{|x-y|\leq R\}} |u(x)-u(y)|^p\nu_\varepsilon(x-y)\,\d y \,\d x\\
	&+ \iil_{\Omega\times \Omega\cap 
		\{|x-y|>R\}} \hspace{-2ex} |u(x)-u(y)|^p\nu_\varepsilon(x-y)\,\d y \,\d x.
	\end{align*}
Since $|u(x)-u(x+h)|^p\leq 2^p(1\land |h|^p)\|u\|^p_{W^{1,\infty}(\R^d)}$ and $\int_{\R^d} (1\land |h|^p)\nu_\eps(h) \d h =1$ one gets
\begin{align*}
H_\eps(x)=	\il_{ \Omega\cap 
	\{|x-y|\leq R\}} \hspace{-2ex}|u(x)-u(y)|^p\nu_\varepsilon(x-y)\,\d y 
&\leq \int_{ |h|\leq R}\hspace{-1ex} |u(x)-u(x+h)|^p\nu_\varepsilon(h)\d h
%\\&\leq 2^p\|u\|^p_{W^{1,\infty}(\R^d)} \int_{ |h|\leq R}(1\land |h|^p) \nu_\varepsilon(h)\,\d h
\leq 2^p\|u\|^p_{W^{1,\infty}(\R^d)}.
\end{align*}

\noindent Noting that $\supp H_\eps \subset B_{2R}(0)$, one finds that $H_\eps(x)\leq 2^p\|u\|^p_{W^{1,\infty}(\R^d)}\mathds{1}_{B_{2R} (0)}$ with $\mathds{1}_{B_{2R} (0)}\in L^1(\Omega)$, the pointwise limit in \eqref{eq:pointwise-close-limit} and the dominated convergence theorem imply that 
\begin{align*}
\lim_{\varepsilon \to 0}\iil_{ \Omega\times \Omega \cap 
	\{|x-y|\leq R\}} \hspace{-2ex}|u(x)-u(y)|^p\nu_\varepsilon(x-y)\,\d y \,\d x=K_{d,p} \int_\Omega |\nabla u(x)|^p \,\d x.
\end{align*}
We thus obtain the following convergence of $L^1(\Omega)$-norm as expected 
\begin{align*}
\lim_{\varepsilon \to 0}\iil_{ \Omega \Omega} |u(x)-u(y)|^p\nu_\varepsilon(x-y)\,\d y \,\d x= K_{d,p}\int_\Omega |\nabla u(x)|^p \,\d x,
\end{align*}
since, by assumption on $\nu_\eps$, one has
\begin{align*}
\iint\limits_{\Omega \times \Omega\cap \{|x-y|>R \}} \hspace{-3ex}|u(x)-u(y)|^p \nu_\varepsilon(x-y) \d y\,\d x 
\leq 2^p\|u\|^p_{L^{p}(\Omega)} \int_{|h|>R} \nu_\varepsilon(h) \mathrm{d}h
\xrightarrow{\varepsilon \to 0} 0. 
\end{align*}
%\end{proof}
%\smallskip
%\begin{proof}[
\noindent \textbf{Second proof of Lemma \ref{lem:BBM-regular} when $u\in C_c^2(\R^d)$}. 
Note that if we put $G_p(s)=|s|^p$ then $G_p\in C^2(\R^d\setminus \{0\})$. The Taylor formula implies 
\begin{align*}
&u(y)-u(x)= \nabla u(x)\cdot (y-x) + O(|x-y|^2),\quad x,y\in \R^d, \\
&	G_p(b)- G_p(a) = G'_p(a)(b-a) +O(b-a)^2,\quad a,b\in \R\setminus\{0\}.
\end{align*}
Hence for almost all $x, y\in \R^d$, we have 
\begin{align*}
	|u(y)-u(x)|^p =G_p( \nabla u(x)\cdot (y-x) + O(|y-x|^2))= |\nabla u(x)\cdot (y-x)|^p + O(|y-x|^{p+1}).
\end{align*}
Set $\delta_x= \operatorname{dist}(x,\partial\Omega)$. Passing through polar coordinates and using the relation \eqref{eq:rotation-invariant-constant} yields
\begin{align*}
\int_{ B(x,\delta_x)}\hspace{-1ex} |u(x)-u(y)|^p&\nu_\varepsilon(x-y) \d y 
= \int_{|h|\leq \delta_x} \Big|\nabla u(x)\cdot h\Big |^p \nu_\varepsilon(h) \d h
+ O\Big( \int_{|h|\leq \delta_x} |h|^{p+1}\nu_\varepsilon(h) \d h\Big)\\
&= \int_{\mathbb{S}^{d-1}} \Big|\nabla u(x)\cdot w \Big|^p\d\sigma_{d-1} (w) \int_{0}^{\delta_x} r^{d-1}\nu_\varepsilon(r) \mathrm{d}r+ O\Big( \hspace{-1ex}\int_{|h|\leq \delta_x} |h|^{p+1} \nu_\varepsilon(h) \mathrm{d}h \Big)\\
&= K_{d,p}\left|\nabla u(x)\right|^p \int_{|h|\leq \delta_x} \nu_\varepsilon(h) \mathrm{d}h 
+ O\Big( \int_{|h|\leq \delta_x} |h|^{p+1} \nu_\varepsilon(h) \mathrm{d}h\Big).
\end{align*}

\noindent Therefore, letting $\varepsilon\to0 $ in the latter expression and taking into account Remark \ref{rem:asymp-nu} gives 
\begin{align*}
&\lim_{\varepsilon\to 0} \int_{ B(x,\delta_x)} |u(x)-u(y)|^p\nu_\varepsilon(x-y) \mathrm{d}y 
= K_{d,p}|\nabla u(x)|^p. 
\end{align*}
\noindent The pointwise convergence \eqref{eq:pointwise-conv} readily follows. Since on the other side, we have 

\begin{align*}
\int_{\Omega\setminus B(x,\delta_x)} \hspace{-1ex} |u(x)-u(y)|^p\nu_\varepsilon(x-y) \mathrm{d}y \leq 2^p\| u\|^p_{L^\infty(\Omega)} \int_{|h|\geq \delta_x} \hspace{-1ex} \nu_\varepsilon(h) \d h\xrightarrow{\eps\to 0}0.
\end{align*}
 Thus the remaining details follow by proceeding  as in the previous proof. 
\end{proof}

\smallskip 

\noindent We are now in position to prove Theorem \ref{thm:BBM-limit-result}. 
\begin{proof}[\textbf{Proof of Theorem \ref{thm:BBM-limit-result}}]
Assume $A_p=\infty$ then by Theorem \ref{thm:liminf} we have $\|\nabla u\|_{L^p(\Omega)}= \infty$ for $1<p<\infty$ and $|u|_{BV(\Omega)}=\infty$ for $p=1$. In either case the relation \eqref{eq:w1p-BBM-limit} or 
\eqref{eq:bv-BBM-limit} is verified. The interesting situation is when $A_p<\infty$, i.e., by Theorem \ref{thm:liminf}, $u\in W^{1,p}(\Omega)$ if $1<p<\infty$ and $u\in BV(\Omega)$ if $p=1$. We provide two alternative proofs. As first alternative, the result immediately follows by combining Theorem \ref{thm:liminf-BBM} and Theorem \ref{thm:limsup_BBM}. For the second alternative, consider $1<p<\infty$ or $u\in W^{1,1}(\Omega)$. 
By Lemma \ref{lem:boundedness-limsup} there is $C>0$ independent of $\varepsilon$ such that for $u, v\in W^{1,p}(\Omega)$,
\begin{align*}
	&\big|\|U_\varepsilon\|_{L^p(\Omega\times \Omega)}-\|V_\varepsilon\|_{L^p(\Omega\times \Omega)} \big| \leq\|U_\varepsilon-V_\varepsilon\|_{L^p(\Omega \times \Omega)}\leq C \|u-v\|_{W^{1,p}(\Omega)},\\
	\intertext{where we deifine}
&	U_\varepsilon(x,y) = |u(x)-u(y)|\nu_\varepsilon^{1/p}(x-y)\quad \text{and}\quad V_\varepsilon(x,y) = |v(x)-v(y)|\nu_\varepsilon^{1/p}(x-y)\,.
	\end{align*}
\noindent 	Therefore, it suffices to establish the result for $u$ in a dense subset of $W^{1,p}(\Omega)$. Note that $C_c^\infty(\R^d )$ is dense in $W^{1,p}(\Omega)$ since $\Omega$ is a $W^{1,p}$-extension domain. We conclude by using Lemma \ref{lem:BBM-regular}. 
\end{proof}

\noindent As consequence of Theorem \ref{thm:BBM-limit-result} we have the following concrete examples. 
\begin{corollary}\label{cor:BBM-fractional}
Assume $\Omega\subset \R^d$ is an extension domain and  $u\in L^p(\Omega)$. 
If we abuse the notation $\|\nabla u\|_{L^1(\Omega)} =|u|_{BV(\Omega)}$ for $p=1$, then there holds  
\begin{align*}
&\lim_{s\to 1} (1-s)\iint\limits_{\Omega\Omega}\frac{|u(x)-u(y)|^p}{|x-y|^{d+sp}}\mathrm{d}y\mathrm{d}x =\frac{|\mathbb{S}^{d-1}|}{p}K_{d,p} \|\nabla u\|^p_{L^p(\Omega)}, 
\\
&\lim_{\varepsilon \to 0} \varepsilon^{-d-p}\iint\limits_{\Omega\times\Omega\cap \{|x-y|<\varepsilon\} }|u(x)-u(y)|^p \d y\d x = \frac{|\mathbb{S}^{d-1}|}{d+p}K_{d,p}\|\nabla u\|^p_{L^p(\Omega)},\\
&\lim_{\varepsilon \to 0} \varepsilon^{-d}\iint\limits_{ \Omega\times\Omega\cap \{|x-y|<\varepsilon\} }\frac{|u(x)-u(y)|^p}{|x-y|^p} \d y\d x = \frac{|\mathbb{S}^{d-1}|}{d}K_{d,p}\|\nabla u\|^p_{L^p(\Omega)},\\
& 	
\lim_{\varepsilon \to 0} \frac{1}{|\log \varepsilon|}\iint\limits_{\Omega\times\Omega\cap \{|x-y|>\varepsilon\}} \frac{|u(x)-u(y)|^p}{|x-y|^{d+p}}\mathrm{d}y\mathrm{d}x = |\mathbb{S}^{d-1}|K_{d,p} \|\nabla u\|^p_{L^p(\Omega)}.
\end{align*}
\end{corollary}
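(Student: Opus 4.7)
The strategy is to recognize each of the three limits as a specific instance of Theorem \ref{thm:BBM-limit-result} obtained by selecting an appropriate family $(\nu_\varepsilon)_\varepsilon$ of $p$-L\'evy approximations of the Dirac mass from the examples already given in the preliminary section, and then matching constants. Every candidate family must be verified to satisfy \eqref{eq:plevy-approx}, after which the conclusion is immediate for every $u\in L^p(\Omega)$ (with the usual convention when $\|\nabla u\|_{L^p(\Omega)}=\infty$).

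For the first limit, the natural candidate is the stable-type family of Example \ref{Ex: stable-class}, namely $\nu_\varepsilon(h)=a_{\varepsilon,d,p}|h|^{-d-p+\varepsilon}$ with $a_{\varepsilon,d,p}=\varepsilon(p-\varepsilon)/(p|\mathbb{S}^{d-1}|)$. Under the change of parameter $s=1-\varepsilon/p$, so that $s\to 1$ as $\varepsilon\to 0$, one has $d+p-\varepsilon=d+sp$ and $a_{\varepsilon,d,p}=p(1-s)s/|\mathbb{S}^{d-1}|$. Substituting into Theorem \ref{thm:BBM-limit-result} and using $s\to 1$ in the $s$-factor yields the asserted identity with constant $|\mathbb{S}^{d-1}|K_{d,p}/p$.

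For the second limit, the relevant choice is that of Example \ref{Ex:example-poincre1} with $\beta=0$, i.e. $\nu_\varepsilon(h)=\frac{d}{|\mathbb{S}^{d-1}|\varepsilon^d}|h|^{-p}\mathds{1}_{B_\varepsilon}(h)$; plugging this into Theorem \ref{thm:BBM-limit-result} and rearranging the prefactor delivers the stated limit with constant $|\mathbb{S}^{d-1}|K_{d,p}/d$. For the third (logarithmic) limit, one uses the $\beta=-d$ endpoint of Example \ref{Ex:example-poincre1}, namely $\nu_\varepsilon(h)=\frac{1}{|\mathbb{S}^{d-1}|\log(\varepsilon_0/\varepsilon)}|h|^{-d-p}\mathds{1}_{B_{\varepsilon_0}\setminus B_\varepsilon}(h)$ for a fixed $\varepsilon_0\in(0,1)$. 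Theorem \ref{thm:BBM-limit-result} then gives the limit with the integration domain truncated to $\{\varepsilon<|x-y|<\varepsilon_0\}$, and $\log(\varepsilon_0/\varepsilon)$ in place of $|\log\varepsilon|$; since $\log(\varepsilon_0/\varepsilon)=|\log\varepsilon|+\log\varepsilon_0\sim|\log\varepsilon|$ as $\varepsilon\to 0$, only the asymptotically equivalent factor matters.

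The only genuinely non-trivial point, and hence the expected obstacle, is the passage from the truncated integral to the full one $\iint_{\Omega\times\Omega\cap\{|x-y|>\varepsilon\}}$ in the logarithmic case: it must be shown that the tail
\begin{equation*}
\iint_{\Omega\times\Omega\cap\{|x-y|\geq\varepsilon_0\}}\frac{|u(x)-u(y)|^p}{|x-y|^{d+p}}\,\d y\,\d x
\end{equation*}
is finite and independent of $\varepsilon$, so that after division by $|\log\varepsilon|$ it contributes nothing to the limit. This follows from the crude bound $|u(x)-u(y)|^p\leq 2^{p-1}(|u(x)|^p+|u(y)|^p)$ together with $\int_{|h|\geq\varepsilon_0}|h|^{-d-p}\,\d h<\infty$, which yields a constant majorant $C\|u\|_{L^p(\Omega)}^p$ (the case $\|u\|_{L^p(\Omega)}=\infty$ being irrelevant since $u\in L^p(\Omega)$). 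With this tail estimate in hand, the three identities of the corollary all drop out as direct consequences of Theorem \ref{thm:BBM-limit-result}.
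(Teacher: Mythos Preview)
Your approach is precisely the one the paper intends: the corollary is stated without proof because each limit is an instance of Theorem~\ref{thm:BBM-limit-result} with one of the ready-made families from Examples~\ref{Ex: stable-class} and~\ref{Ex:example-poincre1}, and your handling of the first and third limits---including the tail estimate needed to remove the upper truncation $|x-y|<\varepsilon_0$ in the logarithmic case---is correct and complete.

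There is, however, one discrepancy you glossed over. With $\beta=0$ in Example~\ref{Ex:example-poincre1} one has $\nu_\varepsilon(h)=\tfrac{d}{|\mathbb{S}^{d-1}|\varepsilon^{d}}|h|^{-p}\mathds{1}_{B_\varepsilon}(h)$, so Theorem~\ref{thm:BBM-limit-result} actually produces
\begin{align*}
\lim_{\varepsilon\to 0}\varepsilon^{-d}\hspace{-2ex}\iint\limits_{\Omega\times\Omega\cap\{|x-y|<\varepsilon\}}\hspace{-2ex}\frac{|u(x)-u(y)|^p}{|x-y|^{p}}\,\d y\,\d x=\frac{|\mathbb{S}^{d-1}|}{d}K_{d,p}\|\nabla u\|^p_{L^p(\Omega)},
\end{align*}
with an extra factor $|x-y|^{-p}$ in the integrand that is \emph{absent} from the second line of the corollary as printed. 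No choice of $\beta$ reproduces the statement verbatim: taking $\beta=p$ removes the factor but forces the scaling $\varepsilon^{-d-p}$ and the constant $|\mathbb{S}^{d-1}|/(d+p)$. Since the constant $|\mathbb{S}^{d-1}|/d$ in the paper matches $\beta=0$ exactly, the printed statement is almost certainly missing the $|x-y|^{-p}$; you should flag this rather than assert that ``rearranging the prefactor delivers the stated limit''.
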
 

\begin{proof}
For the first relation, take $\nu_\varepsilon(h) = a_{\varepsilon, d,p} |h|^{-d-p(1-\varepsilon)}$ with $ a_{\varepsilon, d,p} = \tfrac{p\varepsilon(1-\varepsilon)}{|\mathbb{S}^{d-1}|}$.
For the second and third  take 
$\nu_\eps(h) =\frac{d+\beta}{|\mathbb{S}^{d-1}|}\varepsilon^{-d-\beta}\mathds{1}_{B_\varepsilon}(h)$ $\beta\in\{0,p\}$. For the last one, fixed $\varepsilon_0\geq 1$, take $\nu_\varepsilon(h) = \tfrac{ b_\varepsilon}{|\mathbb{S}^{d-1}||\log\varepsilon|}\mathds{1}_{B_{\varepsilon_0}\setminus B_\varepsilon}(h)$, $b_\varepsilon = \tfrac{p|\log\varepsilon|}{ (1-\varepsilon_0^{-p})+ p|\log\varepsilon|}$, where one notes that $b_\eps\to1$ as $\eps\to0$.
\end{proof}

\smallskip

\begin{proof}[\textbf{Proof of Theorem \ref{thm:weak-convergence}}]
Let $E\subset \Omega$ be compact with a nonempty interior. Consider the open set $E(\delta) = E+ B_\delta(0)\subset \Omega$ where $0<\delta< 1\land \operatorname{dist}(\partial\Omega, E)$ so that $\int_{|h|>\delta} \nu_\varepsilon(h)\,\d h\leq 1$.  Denote $ \d |\nabla u|^p(x)= |\nabla u(x)|^p\d x$, $u\in W^{1,p}(\Omega)$. Using \eqref{eq:split-estimate} and \eqref{eq:split-estimate-BV} with $\Omega$ replaced by $E$ imply 
\begin{align}\label{eq:split-estimate1}
	\int_E \mu_\varepsilon(x)\,\d x
	&\leq K_{d,p}\int_{E(\delta)} \hspace{-2ex}\d |\nabla u|^p(x)+ 2^p \|u\|^p_{L^p(\Omega)}\int_{|h|>\delta} \nu_\varepsilon(h)\,\d h.
	\end{align}

\noindent Hence, since $\int_{|h|>\delta} \nu_\varepsilon(h)\,\d h\leq 1$, the family of functions $(\mu_\varepsilon)_\varepsilon$ is bounded in $L^1(E)$. In virtue of the weak compactness of $L^1(E)$, (see \cite[p.116]{Bre10}) we may assume that $(\mu_\varepsilon)_\varepsilon$ converges in the weak-* sense to a Radon measure $\mu_E$, i.e., $\langle\mu_\eps-\mu_E, \varphi\rangle\xrightarrow{\eps\to 0}0$ for all $\varphi\in C(E)$ otherwise, one may pick a converging subsequence. For a suitable $(\Omega_j)_{j\in \mathbb{N}}$ exhaustion of $\Omega$, i.e., $\Omega_j's$ are open, each $K_j=\overline{\Omega}_j$ is compact, $K_j=\overline{\Omega}_j\subset \Omega_{j+1}$ and $\Omega=\bigcup_{j\in \mathbb{N}}\Omega_j$, it is sufficient to let $\mu= \mu_{K_j}= K_{d,p}|\nabla u|^p$ on $K_j$. We aim to show that $\mu= K_{d,p}|\nabla u|^p$. Noticing $\mu$ and $K_{d,p}|\nabla u|^p$ are Radon measures it sufficient to show that both measures coincide on compact sets, i.e., we have to show that $\mu_E(E) = K_{d,p}\int_{E} \,\d |\nabla u|^p(x). $ 
On the one hand,  since $\mu_\varepsilon(E)\to \mu(E)$ and $\int_{|h|>\delta}\nu_\varepsilon(h)\,\d h\to0$ as $\eps \to 0, $ the fact that $u\in W^{1,p}(\Omega)$ or $u\in BV(\Omega)$ enables us to successively let $\varepsilon\to 0$ and $\delta\to 0$ in \eqref{eq:split-estimate1} which amounts to the following
\begin{align*}
\int_E \d\mu_E(x)\,\leq K_{d,p}\int_{E} \,\d |\nabla u|^p(x). 
\end{align*}

\noindent On other hand, since $E$ has a nonempty interior, Theorem \ref{thm:liminf-BBM} implies
\begin{align*}
K_{d,p}\int_{E} \,\d |\nabla u|^p(x) &\leq	\liminf_{\varepsilon\to 0} \iil_{EE}|u(x) -u(y)|^p \nu_\varepsilon(x-y)\d y\d x
\leq \lim_{\varepsilon\to 0} \int_{E}\mu_\eps(x)\d x = \int_{E}\d\mu_E(x). 
\end{align*}

\noindent Finally $\mu(E)=\mu_E(E)= K_{d,p}\int_{E} \,\d |\nabla u|^p(x)$. Whence we get $\d \mu =K_{d,p} \d |\nabla u|^p$ as claimed.
\end{proof}
\noindent A consequence of Theorem \ref{thm:weak-convergence} is given by the following analog result.

\begin{corollary}\label{cor:weak-convergence}
	Let $\Omega\subset \R^d$ be open. Let $u\in W^{1,p}(\R^d)$ and define the Radon measures 
	\begin{align*}
	\d\widetilde{\mu}_{\varepsilon}(x) = \int_{\R^d}|u(x)-u(y)|^p\nu_\varepsilon(x-y) \mathrm{d}y\d x.
	\end{align*}
	The sequence $(\widetilde{\mu}_{\varepsilon})_\eps$
	converges weakly on $\Omega$ to the Radon measure $\d \mu(x) =K_{d,p}|\nabla u(x)|^p\d x$, 
	i.e. $ \widetilde{\mu}_\varepsilon (E)\xrightarrow{\eps\to 0}\mu(E)$ for every compact set $E\subset \Omega$. If $u\in BV(\Omega)$, $p=1$,  then $\d \mu(x) =K_{d,1}\d |\nabla u|(x)$.
\end{corollary}

\smallskip

\begin{proof}
	Let $E\subset \Omega$ be a compact set so that $\delta>0$ with $\delta= \dist(E,\Omega^c)>0$. Thus, we have 
	\begin{align*}
\widehat{\mu}_{\varepsilon}(E):=	\iil_{E\Omega^c} |u(x)-u(y)|^p\nu_\eps(x-y)\d y\d x
	%&\leq 2^{p-1}\iil_{\Omega\Omega^c} (|u(x)|^p+  |u(y)|^p)\mathds{1}_{B^c_\delta} (x-y)\nu_\eps(x-y)\d y\d x\\ &=
	&\leq2^{p-1}\|u\|^p_{L^p(\R^d)}\int_{ |h|>\delta}\nu_\eps(h)\d h\xrightarrow{\eps\to0}0. 
	\end{align*}
Form this and  Theorem \ref{thm:weak-convergence} we get $\widetilde{\mu}_{\varepsilon}(E)= \mu_{\varepsilon}(E)+\widehat{\mu}_{\varepsilon}(E) \xrightarrow{\eps\to0}\mu(E).$
\end{proof}

\noindent Next, we sate without proof the asymptotically compactness involving the case where the function $u$ also varies. The full proof can be found in \cite[Theorem 5.40]{Fog20} and \cite{Pon04}.

\begin{theorem}
	Assume $\Omega\subset \R^d$ is open, bounded and Lipschitz. Let the family $(u_\eps)_\eps$ such that 
	\begin{align*}
	\sup_{\eps>0} \Big(\|u_\eps\|^p_{L^p(\Omega)}+ \iil_{ \Omega \Omega} |u_\eps(x)-u_\eps(y)|^p\nu_\eps(x-y)\d y \d x\Big)<\infty.
	\end{align*}
	\noindent There exists a subsequence $(\eps_n)_n$ with $\eps_n\to0^+$ as $n\to \infty$ such that $(u_{\eps_n})_n$ converges in $L^p(\Omega)$ to a function $u\in L^p(\Omega)$. Moreover, $u\in W^{1,p}(\Omega)$ if $1<p<\infty$ or $u\in BV(\Omega)$ if $p=1$. 
\end{theorem}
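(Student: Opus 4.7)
The plan is to combine a Kolmogorov--Riesz--Fr\'echet compactness argument with the identification technique of Theorem \ref{thm:liminf}. The family $(u_\eps)$ is uniformly bounded in $L^p(\Omega)$ by hypothesis and $\Omega$ is bounded, so the only missing condition for compactness is the uniform continuity of translations:
\[
\sup_\eps \int_{\Omega'}|u_\eps(x+h)-u_\eps(x)|^p\,\d x \xrightarrow{|h|\to 0} 0\qquad\text{for every }\Omega' \Subset \Omega.
\]

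To obtain this, I would introduce the mollifications $u_\eps^\delta := u_\eps * \phi_\delta$ on $\Omega_\delta$, with $\phi_\delta$ a standard radial mollifier supported in $B_\delta(0)$. For each fixed $\delta>0$ the convolution estimates $\|u_\eps^\delta\|_{L^\infty(\Omega_\delta)} \leq C(\delta)\|u_\eps\|_{L^p(\Omega)}$ and $\|\nabla u_\eps^\delta\|_{L^\infty(\Omega_\delta)} \leq C(\delta)\|u_\eps\|_{L^p(\Omega)}$ make $(u_\eps^\delta)_\eps$ uniformly bounded and equi-Lipschitz on every compact $K \Subset \Omega_\delta$, so Arzel\`a--Ascoli yields precompactness in $L^p(K)$. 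A diagonal argument on an exhaustion of $\Omega$ reduces the compactness claim to the uniform approximation
\[
\sup_\eps \|u_\eps^\delta - u_\eps\|_{L^p(\Omega_\delta)} \xrightarrow{\delta \to 0} 0,
\]
which is the step I expect to be the main obstacle. Jensen's inequality gives
\[
\int_{\Omega_\delta} |u_\eps^\delta - u_\eps|^p \,\d x \leq \int_{B_\delta} \phi_\delta(h) \int_{\Omega_\delta}|u_\eps(x-h)-u_\eps(x)|^p\,\d x\,\d h,
\]
so one must control $\int_{\Omega_\delta}|u_\eps(x-h)-u_\eps(x)|^p\d x$ uniformly in $\eps$ by a quantity vanishing with $|h|$. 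Following the averaging technique of \cite{Ponce2004}, for $r \asymp |h|$ sufficiently small I would use
\[
|u_\eps(x-h)-u_\eps(x)|^p \leq \frac{C}{|B_r|}\int_{B(x,r)}\bigl(|u_\eps(x-h)-u_\eps(z)|^p+|u_\eps(x)-u_\eps(z)|^p\bigr)\,\d z
\]
and then, after Fubini and a change of variables, bound the result by $\iint_{\Omega\Omega}|u_\eps(x)-u_\eps(y)|^p\nu_\eps(x-y)\,\d x\,\d y$ times a factor vanishing as $|h|\to 0$, exploiting the normalization $\int(1\land|h|^p)\nu_\eps = 1$ and the concentration property \eqref{eq:concentration-prop}. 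The delicate aspect is that $\nu_\eps$ need not admit any pointwise lower bound on a fixed ball, so the comparison must rely on the scale-invariant total mass rather than pointwise lower bounds of the kernel.

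Once a subsequence $u_{\eps_n} \to u$ is extracted in $L^p(\Omega)$, I would identify $u$ by revisiting the proof of Theorem \ref{thm:liminf}. For $\varphi \in C_c^\infty(\Omega)$ and $e \in \mathbb{S}^{d-1}$, applying Lemma \ref{lem:boun-integration-bypart} to $(u_{\eps_n},\nu_{\eps_n})$ and then the H\"older estimate from the proof of Theorem \ref{prop:bounded-liminf-estimate} yields
\[
\Big|\int_\Omega u_{\eps_n}(x)\,T_{n,e}(x)\,\d x\Big| \leq M^{1/p}\,\|\varphi\|_{L^{p'}(\Omega)},
\]
where $T_{n,e}(x) := \int_{\R^d}\tfrac{\varphi(y)-\varphi(x)}{|x-y|}(1\land|x-y|^p)\nu_{\eps_n}(x-y)\,\d y$ is the full difference quotient appearing in \eqref{eq:nonlocal-liminf}. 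As computed in \eqref{eq:K1d}--\eqref{eq:K1d-p}, $T_{n,e}(x) \to K_{d,1}\nabla\varphi(x)\cdot e$ pointwise with $|T_{n,e}(x)|\le \|\nabla\varphi\|_\infty$ uniformly in $n$, so dominated convergence combined with $u_{\eps_n}\to u$ in $L^p(\Omega)$ delivers
\[
K_{d,1}\Big|\int_\Omega u(x)\,\nabla\varphi(x)\cdot e\,\d x\Big| \leq M^{1/p}\|\varphi\|_{L^{p'}(\Omega)}.
\]
The density and duality arguments at the end of the proof of Theorem \ref{thm:liminf} then yield $u \in W^{1,p}(\Omega)$ for $1<p<\infty$ and $u\in BV(\Omega)$ for $p=1$.
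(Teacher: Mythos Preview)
The paper gives no proof of this statement; it merely refers to \cite{Ponce2004}. Your overall strategy (Kolmogorov--Riesz compactness followed by the duality identification from Theorem~\ref{prop:bounded-liminf-estimate}) is indeed the intended route, but each half of your sketch has a real gap.

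For compactness, the ball-averaging step cannot close: after Fubini you are left with something comparable to $r^{-d}\iint_{|x-z|<2r}|u_\eps(x)-u_\eps(z)|^p\,\d z\,\d x$, and there is no way to dominate the kernel $r^{-d}\mathds{1}_{B_{2r}}$ by $\nu_\eps$ times a factor vanishing with $r$---precisely the obstruction you flag but do not resolve. The remedy is to average with the kernel itself rather than with a ball: put $\widetilde{\rho}_\eps(h):=(1\land|h|^p)\nu_\eps(h)$, which is a radial probability density satisfying \eqref{eq:approx-dirac}, and note that $(1\land t^p)t^{-p}\le 1$ gives
\[
\iint_{\Omega\Omega}\frac{|u_\eps(x)-u_\eps(y)|^p}{|x-y|^p}\,\widetilde{\rho}_\eps(x-y)\,\d x\,\d y \;\le\; \iint_{\Omega\Omega}|u_\eps(x)-u_\eps(y)|^p\,\nu_\eps(x-y)\,\d x\,\d y.
\]
This places you exactly in Ponce's original setting, and his compactness argument applies verbatim.

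For the identification, your $T_{n,e}$---the \emph{full} integral over $\R^d$---tends to $0$, not to $K_{d,1}\nabla\varphi(x)\cdot e$: the leading contribution $\int_{\R^d}(\nabla\varphi(x)\cdot\tfrac{h}{|h|})(1\land|h|^p)\nu_{\eps_n}(h)\,\d h$ vanishes by odd symmetry, and the remainder is $o(1)$ by Remark~\ref{rem:asymp-nu}. (There is in fact a sign slip in the paper's \eqref{eq:K1d-p}; it is harmless there because of the absolute values in \eqref{eq:nonlocal-liminf}, but it has misled you here.) You must keep the two half-space integrals separate and retain the absolute values exactly as in \eqref{eq:nonlocal-liminf}, applying Lemma~\ref{lem:boun-integration-bypart} to each pair $(u_{\eps_n},\nu_{\eps_n})$ and passing to the limit in each half via dominated convergence together with $u_{\eps_n}\to u$ in $L^p(\Omega)$. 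This yields $K_{d,1}\big|\int_\Omega u\,\nabla\varphi\cdot e\,\d x\big|\le M^{1/p}\|\varphi\|_{L^{p'}(\Omega)}$, after which your final duality step goes through.
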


%\smallskip
			
\begin{counterexample}\label{Ex:counterexample-extension}We consider the fractional kernel $\nu_\varepsilon(h) = a_{\varepsilon, d,p} |h|^{-d-(1-\eps)p}$, $p\geq1$, where $a_{\varepsilon, d,p} = \frac{p\varepsilon(1-\varepsilon)}{|\mathbb{S}^{d-1}|}.$ We put $s= 1-\varepsilon>0$ and consider the nonlocal seminorm
\begin{align*}
|u|^p_{W^{s,p}(\Omega)}=|u|^p_{W^{p}_{\nu_\eps}(\Omega)}
= \iil_{\Omega\Omega}|u(x)- u(y)|^p\nu_\eps(x-y)\d y\d x
=\frac{ps(1-s)}{|\mathbb{S}^{d-1}|} \iil_{\Omega\Omega} \frac{|u(x)- u(y)|^p}{|x-y|^{d+sp}}\d y \d x.
\end{align*}

\textbf{Case $d=1$.} For an illustrative purpose we start with the case $d=1$. Consider $\Omega= (-1,0)\cup (0,1)$ and put $u(x) = -\frac12$ if $x\in (-1,0)$ and $u(x) = \frac12$ if $x\in [0,1)$. If we put $s=1-\eps$ then we have 
\begin{align*}
|u|^p_{W^{s,p}(\Omega)}= ps(1-s)\int_{0}^1\int_0^1\frac{\,\d y\,\d x}{(x+y)^{1+sp}} 
%= \frac{2}{sp}\int_{0}^1 x^{-sp}-(1+x)^{-sp}\,\d x
=
\begin{cases}
 \infty&\text{if } sp\geq 1,\\
 \frac{(1-s)}{1-sp} (2-2^{1-sp})&\text{if } sp< 1.
\end{cases}
\end{align*}
\begin{enumerate}[$(i)$]
\item\label{item-counter-ex1} Clearly, $u \in W^{1,p}(\Omega)$ for all $1\leq p<\infty$ with $\nabla u=0$ on $\Omega$. Note however that, the weak derivative of $u$ on $(-1,1)$ is $\delta_0$; the Dirac mass at the origin. It follows that $u\not \in W^{1,p}(-1,1)$ for all $1\leq p<\infty $ and $u\in BV(-1,1)$ with $|u|_{BV(-1,1)}=1.$
\item Moreover, $\Omega$ is not a $W^{1,p}$-extension domain. Indeed, assume $\overline{u}\in W^{1,p}(\R)$ is an extension of $u$ defined. In particular, $\overline{u}\in W^{1,p}(-1,1)$ and $\overline{u}= u$ on $\Omega$. The distributional derivative of $\overline{u}$ on $(-1, 1)$ is $\nabla \overline{u} = \delta_0$, This contradicts the fact that $\overline{u}\in W^{1,p}(\R)$. 

\item Since integrals disregard null sets, we have $\|u\|_{W^{s,p}(\Omega)} = \|u\|_{W^{s,p}(-1,1)}$ for all $1\leq p<\infty$. If $1<p<\infty$ and $s\geq 1/p$ then $\|u\|_{W^{s,p}(\Omega)} = \|u\|_{W^{s,p}(-1,1)} =\infty$ and hence $u\not \in W^{s,p}(\Omega)$. Thus the embedding $W^{1,p}(\Omega)\hookrightarrow W^{s,p}(\Omega)$ fails. However, if $p=1$ we get $u\in W^{s,1}(-1,1)$. Since $s=1-\eps$ this also implies that
\begin{align*}
	A_p= \liminf_{\eps\to 0}|u|^p_{W^p_{\nu_\eps}(\Omega)}=
	\begin{cases}
	\infty&\text{if $p>1$},\\
	1 &\text{if $p=1$}.
	\end{cases}
\end{align*}
\end{enumerate} 

%\vspace{2mm}
%B_1(0)\setminus \{(x',x_d)\in \R^d:~x_d=0\}
\noindent \textbf{Case $d\geq 2$.} The above example persists in higher dimension. Consider $\Omega$ be the unit ball $B_1(0)$ deprived with the hyperplane $\{x_d=0\}$ that is, $\Omega= B^+_1(0)\cup B^-_1(0)$ where $ B^\pm_1(0)= B_1(0)\cap \{(x',x_d)\in \R^d:\, \pm x_d>0\}$ and $u(x) = \frac{1}{2}\mathds{1}_{B^+_1(0)}(x)- \frac{1}{2}\mathds{1}_{B^-_1(0)}(x)$.
%	 Obviously, $u \in W^{1,p}(\Omega)$ for all $1\leq p<\infty$ with $\nabla u=0$. 
%Furthermore, following the discussion above, one can check that $u$ does not have any extension to the whole space. Hence $\Omega$ cannot be an extension domain. 
%On the other hand, $u$ does not belong to the fractional Sobolev space $W^{s,p}(\Omega)$ if $1<p<\infty$ and $s\geq 1/p$. Let us justify this by showing that $\|u\|_{W^{s,p}(\Omega)}=\infty$. 
Denoting balls in $\R^{d-1}$  as $B'_r(x')$, we have 
$\big\{(x,y)\in\R^d\times \R^d:\, x_d, y_d\in (0,1/2),\, x', y'-x'\in B'_{1/4}(0)\big\} \subset B_1^+(0)\times B_1^+(0).$
Enforcing the change of variables $y'=x'+ (x_d+y_d)h'$ so that $\d y'=(x_d+y_d)^{d-1}\d h'$ yields

\begin{align*}
|u|^p_{W^{s,p}(\Omega)}&=2a_{\eps, d,p} \int_{B_1^+(0)}\int_{B_1^+(0)}
\big(|x'-y'|^2+ (x_d+y_d)^2\big)^{-\frac{d+sp}{2}} \d y\,\d x\\
%&\geq a_{\eps, d,p}\iil_{D}\big(|x'-y'|^2+ (x_d+y_d)^2\big)^{-\frac{d+sp}{2}} \d y\,\d x\\
&\geq 2a_{\eps, d,p} \int_{0}^{1/2} \int_{0}^{1/2} \int_{B'_{1/4}(0)} 
\int_{B'_{1/4}(x')} \big(|x'-y'|^2+ (x_d+y_d)^2\big)^{-\frac{d+sp}{2}} \d y'\,\d x' \d y_d\,\d x_d\\
%&= 2a_{\eps, d,p}|B'_{1/4}(0)||\mathbb{S}^{d-2}|\int_{0}^{1/2} \int_{0}^{1/2}\int_{0}^{1/4}r^{d-2}\big(r^2+ (x_d+y_d)^2\big)^{-(d+sp)/2} \d r\,\d x_d\d y_d\\
%
%&= 2a_{\eps, d,p}|B'_{1/4}(0)||\mathbb{S}^{d-2}|\int_{0}^{1/2} \int_{0}^{1/2} (x_d+y_d)^{-1-sp} 
%\int_{0}^{1/4(x_d+ y_d)^{-1}}\hspace{-2ex}\frac{\rho^{d-2} \d \rho}{(1+\rho^2)^{(d+sp)/2}}\d x_d\d y_d\\
%
&= 2a_{\eps, d,p}|B'_{1/4}(0)|\int_{0}^{1/2} 
\int_{0}^{1/2}\int_{|h'|\leq \frac{1}{4(x_d+ y_d)}}\hspace{-0.0ex} 
\frac{\d h'}{(1+|h'|^2)^{\frac{d+sp}{2}} }\frac{\d x_d\d y_d}{(x_d+y_d)^{1+sp}}\\
\overset{\tfrac{1}{x_d+ y_d}\geq 1}{\geq} &
ps(1-s)\kappa^{1}_{d,p,s}\int_{0}^{1/2} \int_{0}^{1/2} 
\frac{\d x_d\d y_d}{(x_d+y_d)^{1+sp}},
 \qquad 
\kappa^{1}_{d,p,s}=2\frac{|B'_{1/4}(0)|}{|\mathbb{S}^{d-1}|} 
\int_{B'_{1/4}(0)}\frac{\d h'}{(1+|h'|^2)^{\frac{d+sp}{2}}}. 
\end{align*}

\noindent Analogously, since $B_1^+(0)\times B_1^+(0)\subset \big\{(x,y)\in\R^d\times \R^d:\, x_d, y_d\in (0,1),\,\, x'\in B'_{1}(0)\big\}$ we have 
\begin{align*}
|u|^p_{W^{s,p}(\Omega)}&=2a_{\eps, d,p} \int_{B_1^+(0)}
\int_{B_1^+(0)}\big(|x'-y'|^2+ (x_d+y_d)^2\big)^{-\frac{d+sp}{2}} \d y\,\d x\\
%&\leq a_{\eps, d,p}\iil_{D}\big(|x'-y'|^2+ (x_d+y_d)^2\big)^{-\frac{d+sp}{2}} \d y\,\d x\\
&\leq 2a_{\eps, d,p} \int_{0}^{1} \int_{0}^{1} \int_{B'_{1}(0)} \int_{\R^{d-1}} 
\big(|x'-y'|^2+ (x_d+y_d)^2\big)^{-\frac{d+sp}{2}} \d y'\,\d x' \d y_d\,\d x_d\\
%&= 2a_{\eps, d,p}|B'_{1}(0)||\mathbb{S}^{d-2}|\int_{0}^{1} \int_{0}^{1}\int_{0}^{\infty}r^{d-2}
%\big(r^2+ (x_d+y_d)^2\big)^{-\frac{d+sp}{2}} \d r\,\d x_d\d y_d\\
%
%&= 2a_{\eps, d,p}|B'_{1/4}(0)||\mathbb{S}^{d-2}|\int_{0}^{1} \int_{0}^{1} (x_d+y_d)^{-1-sp} 
%\int_{0}^{\infty}\hspace{-2ex}\frac{\rho^{d-2} \d \rho}{(1+\rho^2)^{\frac{d+sp}{2}}}\d x_d\d y_d\\
%
%&=2a_{\eps, d,p}|B'_{1}(0)|\int_{0}^{1} \int_{0}^{1} \frac{\d x_d\d y_d}{(x_d+y_d)^{1+sp}}
%\int_{\R^{d-1}}\hspace{-1ex}(1+|h'|^2)^{\frac{d+sp}{2}} \d h'\\
%
&= ps(1-s) \kappa^{2}_{d,p,s}\int_{0}^{1} \int_{0}^{1} 
\frac{\d x_d\d y_d}{(x_d+y_d)^{1+sp}}, \qquad \kappa^{2}_{d,p,s}= 2\frac{|B'_{1}(0)|}{|\mathbb{S}^{d-1}|}
\int_{\R^{d-1}} \frac{\d h'}{(1+|h'|^2)^{\frac{d+sp}{2}} }. 
\end{align*}

\noindent Note that $\kappa^{i}_{d,p,1}\leq \kappa^{i}_{d,p,s} \leq \kappa^{i}_{d,p,0}$. Using the case $d=1$ we draw the following conclusion, 
\begin{align*}
	|u|^p_{W^p_{\nu_\eps}(\Omega)}= |u|^p_{W^{s,p}(\Omega)}&\asymp
	\begin{cases}
	\infty&\text{if } sp\geq 1,\\
	\frac{(1-s)}{1-sp} (2-2^{1-sp})&\text{if } sp< 1.
	\end{cases}
\end{align*}

\begin{enumerate}[$(i)$]
	\item\label{item-counter-ex1a} Clearly, $u \in W^{1,p}(\Omega)$ and $u\not \in W^{1,p}(B_1(0))$ for all $1\leq p<\infty $. However $u\in BV(B_1(0))$. 
	
	\item \label{item-counter-exb}Moreover, $(i)$ implies that $\Omega$ is not a $W^{1,p}$-extension domain for $1\leq p<\infty $.
%	 Indeed, assume $\overline{u}\in W^{1,p}(\R)$ is an extension of $u$ defined. In particular, $\overline{u}\in W^{1,p}(-1,1)$ and $\overline{u}= u$ on $\Omega$. The distributional derivative of $\overline{u}$ on $(-1, 1)$ is $\nabla \overline{u} = \delta_0$, This contradicts the fact that $\overline{u}\in W^{1,p}(\R)$. 
	
	\item \label{item-counter-exc}As integrals disregard null sets, we have $\|u\|_{W^{s,p}(\Omega)} = \|u\|_{W^{s,p}(B_1(0))} =\infty$ for $1<p<\infty$ and $s\geq 1/p$ and hence $u\not \in W^{s,p}(\Omega)$. Thus the embedding $W^{1,p}(\Omega)\hookrightarrow W^{s,p}(\Omega)$ fails. However, if $p=1$ we get $u\in W^{s,1}(B_1(0))$. Furthermore, we have 
	\begin{align*}
	A_p= \liminf_{\eps\to 0}|u|^p_{W^p_{\nu_\eps}(\Omega)}\asymp
	\begin{cases}
	\infty&\text{if $p>1$},\\
1&\text{if $p=1$}.
	\end{cases}
	\end{align*}
\end{enumerate}
\end{counterexample}

\begin{proposition}\label{prop:cost-Kdp}
For any $e\in \mathbb{S}^{d-1}$ we have
\begin{align*}
	K_{d,p}= \fint_{\mathbb{S}^{d-1}} |w\cdot e|^p\d\sigma_{d-1}(w) = \frac{\Gamma\big(\frac{d}{2}\big)\Gamma\big(\frac{p+1}{2}\big)}{\Gamma\big(\frac{d+p}{2}\big) \Gamma\big(\frac{1}{2}\big)}.
\end{align*}
\end{proposition}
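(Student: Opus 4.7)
The plan is to exploit rotation invariance of $\d\sigma_{d-1}$ to fix the axis, then reduce the surface integral to a one-dimensional Beta integral.

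First, I would observe that since the Hausdorff measure $\d\sigma_{d-1}$ is invariant under orthogonal transformations, the value of $\fint_{\mathbb{S}^{d-1}} |w\cdot e|^p \d\sigma_{d-1}(w)$ does not depend on the choice of $e\in \mathbb{S}^{d-1}$ (this is already noted after \eqref{eq:BBM-constant}). Hence it suffices to compute it with $e = e_d$, for which $w\cdot e_d$ is simply the last coordinate $w_d$ of $w$.

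Next, I would parametrise $\mathbb{S}^{d-1}$ in spherical coordinates around the $e_d$-axis: each $w \in \mathbb{S}^{d-1}$ factors uniquely as $w = (\sin\theta\,\omega',\cos\theta)$ with $\theta \in [0,\pi]$ and $\omega' \in \mathbb{S}^{d-2}$, the surface element being
\begin{align*}
\d\sigma_{d-1}(w) = \sin^{d-2}\theta \,\d\theta\,\d\sigma_{d-2}(\omega').
\end{align*}
Integrating out $\omega'$ gives
\begin{align*}
\int_{\mathbb{S}^{d-1}} |w_d|^p \d\sigma_{d-1}(w) = |\mathbb{S}^{d-2}| \int_0^\pi |\cos\theta|^p \sin^{d-2}\theta\,\d\theta.
\end{align*}

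Then, using the symmetry $\theta \mapsto \pi-\theta$ and the substitution $u=\sin^2\theta$, the trigonometric integral reduces to a Beta function:
\begin{align*}
\int_0^\pi |\cos\theta|^p \sin^{d-2}\theta \,\d\theta = 2\int_0^{\pi/2}\cos^p\theta \sin^{d-2}\theta \,\d\theta = B\!\left(\tfrac{p+1}{2},\tfrac{d-1}{2}\right) = \frac{\Gamma\!\left(\tfrac{p+1}{2}\right)\Gamma\!\left(\tfrac{d-1}{2}\right)}{\Gamma\!\left(\tfrac{d+p}{2}\right)}.
\end{align*}

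Finally, I would divide by $|\mathbb{S}^{d-1}|$ and substitute the classical expressions $|\mathbb{S}^{n-1}|=2\pi^{n/2}/\Gamma(n/2)$ together with $\Gamma(1/2)=\sqrt{\pi}$; the ratio $|\mathbb{S}^{d-2}|/|\mathbb{S}^{d-1}|$ simplifies and, after cancelling the factor $\Gamma((d-1)/2)$, produces the announced closed form. There is no real obstacle here: the whole calculation is a standard spherical-coordinates/Beta-function reduction, and the only point that requires any care is bookkeeping the surface-area normalisations so that the $\Gamma((d-1)/2)$ factor cancels cleanly against the one arising from $|\mathbb{S}^{d-2}|$.
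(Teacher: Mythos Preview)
Your argument is correct and essentially identical to the paper's: both fix $e=e_d$ by rotation invariance, slice $\mathbb{S}^{d-1}$ along the $e_d$-axis, reduce to a one-dimensional Beta integral, and finish with the surface-area formulas $|\mathbb{S}^{n-1}|=2\pi^{n/2}/\Gamma(n/2)$ and $\Gamma(1/2)=\sqrt\pi$; the only cosmetic difference is that the paper integrates in the height variable $t=w_d$ while you use the polar angle $\theta$ with $t=\cos\theta$. Your observation that the $\Gamma\!\big(\tfrac{d-1}{2}\big)$ coming from $|\mathbb{S}^{d-2}|$ cancels against the one from the Beta function is exactly right---the paper's own computation yields $\Gamma(d/2)$ rather than $\Gamma((d-1)/2)$ in the numerator of the final closed form, so the displayed formula in the proposition appears to carry a typo.
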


\begin{proof}
 The case $d=1$ is obvious and we only prove for $d\geq 2$. Since $ K_{d,p}$ is independent of $e\in \mathbb{S}^{d-1}$, it is sufficient to take $e=(0, \cdots, 0,1)$. Let $w = (w', t)\in \mathbb{S}^{d-1} $ with $t\in (-1,1)$ so that $w'\in \sqrt{1-t^2}\mathbb{S}^{d-2}$. The Jacobian for spherical coordinates gives $ \d \sigma_{d-1}(w)=\frac{ \d\sigma_{d-2}(w')dt}{\sqrt{1-t^2}}$ (see \cite[Appendix D.2]{grafakos04}). 
 Therefore, noting $ |\mathbb{S}^{d-1}|= \omega_{d-1}$, we have
\begin{align*}
K_{d,p}&= \fint_{\mathbb{S}^{d-1}} |w_d|^pd\sigma_{d-1}(w) 
&&=\frac{1}{\omega_{d-1}} \int_{-1}^{1}\int_{\sqrt{1-t^2}\mathbb{S}^{d-2}}\hspace{-2ex} |t|^p
\frac{\d\sigma_{d-2}(w')\d t}{\sqrt{1-t^2}} \\
&=\frac{2}{\omega_{d-1}}\int_{0}^{1} t^p\Big|\sqrt{1-t^2}\mathbb{S}^{d-2}\Big| 
\tfrac{ dt}{\sqrt{1-t^2}}
&&=\frac{2\omega_{d-2}}{\omega_{d-1}} \int_{0}^{1} (1-t^2)^{\frac{d-3}{2}} t^p dt \\
&=\frac{\omega_{d-2}}{\omega_{d-1}} \int_{0}^{1} (1-t)^{\frac{d-1}{2}-1} t^{\frac{p+1}{2}-1} dt 
&&= \frac{\omega_{d-2}}{\omega_{d-1}} B\big( \frac{d-1}{2}, \frac{p+1}{2}\big)= \frac{\omega_{d-2}}{\omega_{d-1}}\frac{\Gamma\left(\frac{d-1}{2}\right)\Gamma\left(\frac{p+1}{2}\right)}{\Gamma\left(\frac{d+p}{2}\right)}.
\end{align*}
%\vspace{-2mm}
Here $B(x,y):= \int_0^1 (1-t)^{x-1}t^{y-1}\d t, x>0, y>0$ is the beta function which links to the Gamma function by the relation $B(x,y)\Gamma(x+y)=\Gamma(x)\Gamma(y)$. 
The claim follows by using the formula
$ \omega_{d-1}= \frac{2\pi^{d/2}}{\Gamma\big(d/2\big)}$ along with $\Gamma(\frac{1}{2}) = \pi^{1/2}$. 
 %\pagebreak[2]
\end{proof}

\vspace{-2mm}
%\noindent \textbf{Data Availability Statement (DAS)}: Data sharing not applicable, no datasets were generated or analyzed during the current study.

\bibliographystyle{alpha}
%\bibliography{template-bibliography}

\end{document}